\DeclareSymbolFont{cyrletters}{OT2}{wncyr}{m}{n}
\numberwithin{equation}{section} \numberwithin{figure}{section}
\DeclareMathOperator{\Pic}{Pic} 
\DeclareMathOperator{\Gal}{Gal} 
\DeclareMathOperator{\Aut}{Aut} 
\DeclareMathOperator{\Spec}{Spec}
\DeclareMathOperator{\characteristic}{char} \DeclareMathOperator{\Val}{Val}
\DeclareMathOperator{\Br}{Br} 
\DeclareMathOperator{\inv}{inv}
\DeclareMathOperator{\Bl}{Bl} \DeclareMathOperator{\Res}{R}
\DeclareMathSymbol{\Sha}{\mathalpha}{cyrletters}{"58}
\newcommand{\OO}{\mathcal{O}}
\newcommand{\PGL}{\textrm{PGL}}
\newcommand{\Dfive}{{\mathbf D}_5}
\newcommand{\Esix}{{\mathbf E}_6}
\newcommand\FF{\mathbb{F}}
\newcommand\PP{\mathbb{P}}
\newcommand\ZZ{\mathbb{Z}}
\newcommand\NN{\mathbb{N}}
\newcommand\QQ{\mathbb{Q}}
\newtheorem{lemma}{Lemma}
\newtheorem{theorem}[lemma]{Theorem}
\newtheorem{proposition}[lemma]{Proposition}
\newtheorem{corollary}[lemma]{Corollary}
\theoremstyle{definition}
\newtheorem{definition}[lemma]{Definition}
\newtheorem{remark}[lemma]{Remark}
\numberwithin{lemma}{section}
\begin{document}

\title[The Hasse principle for lines]
{The Hasse principle for lines on \\ del Pezzo surfaces}

\author{\sc J\"{o}rg Jahnel}
\address{J\"{o}rg Jahnel \\
Department Mathematik \\
Walter-Flex-Strasse 3\\
Universit\"{a}t Siegen \\
D-57072 \\
Siegen \\
Germany.}
\email{jahnel@mathematik.uni-siegen.de}
\urladdr{http://www.uni-math.gwdg.de/jahnel}

\author{\sc Daniel Loughran}
\address{Daniel Loughran \\
Leibniz Universit\"{a}t Hannover,
Institut f\"{u}r Algebra, Zahlentheorie
    und Diskrete Mathematik\\
Welfengarten 1\\
30167 Hannover\\
Germany.}
\email{loughran@math.uni-hannover.de}
\urladdr{http://www.iazd.uni-hannover.de/$\sim$loughran/}

\subjclass[2010]
{11G35 (primary), 
11R32, 
14J26, 
14-04 
(secondary)}



\begin{abstract}
	In this paper, we consider the following problem: Does there 
	exist a cubic surface over $\QQ$ which contains no line over $\QQ$,
	yet contains a line over every completion of $\QQ$? This question may be
	interpreted as asking whether the Hilbert scheme of lines on a cubic surface
	can fail the Hasse principle. We also consider analogous problems, 
	over arbitrary number fields, for other del Pezzo surfaces and complete intersections of two quadrics.
\end{abstract}

\maketitle

\thispagestyle{empty}

\tableofcontents

\section{Introduction} \label{sec:intro}

One says that a class of varieties over a number field $k$ satisfies the 
Hasse principle if for each variety in the class, the existence
of a rational point over every completion of $k$ implies the existence
of a rational point over $k$. This principle takes its name from the 
classical Hasse-Minkowski theorem, which states that the Hasse principle holds 
for the class of quadric hypersurfaces. Already for cubic curves and cubic surfaces
however, the Hasse principle can fail. There has been 
much work on constructing and controlling such failures, and moreover there are many positive
results proving that the Hasse principle holds for special classes of varieties.
For an overview of various results and methods, see \cite{Sko01}.



Recently, there has been an interest in Hasse-type principles
for other existence problems in arithmetic geometry (e.g.~isogenies 
of elliptic curves \cite{Sut12}, \cite{BC14} and Apollonian gaskets \cite{BK14}).
In this paper we study Hasse-type principles
for linear subspaces on varieties. Namely, given a class of varieties
embedded into a fixed projective space $\PP^n$ over~$k$ and some $r \in \NN$, whether
the existence of a linear subspace of dimension~$r$ over every completion
of $k$ guarantees the existence of a linear subspace
of dimension~$r$ over $k$. Note that this is in fact
a special case of the usual Hasse principle. Indeed,
there is a Hilbert scheme parametrising the linear subspaces of fixed dimension,
and we are asking whether these schemes satisfy the Hasse principle.

For complete intersections of hypersurfaces of odd degree in $\PP^n$ over $k$, 
Birch \cite{Bir57} used the circle method to show that there always exists a linear subspace
over $k$ of given dimension $r$,
whenever $n$ is sufficiently large in terms of $r$ and the degrees of the
equations
(see \cite{Die10} and \cite{Bra14} for recent improvements). There does not appear however
to have been any work into such problems for varieties of small dimension.
For example, as already noted, the Hasse principle can fail
for cubic surfaces, but \emph{can the Hasse principle
for lines fail for cubic surfaces}?

Any smooth cubic surface over an algebraically closed field contains $27$
lines. Moreover the Hilbert scheme of lines is reduced, hence it is a $0$-dimensional
scheme of degree $27$. It is well-known that $0$-dimensional schemes
can fail the Hasse principle; counter-examples
occur  as the zero sets of monic integral polynomials with a root modulo
every integer, but no integer root (such polynomials automatically have a real root;
see Lemma~\ref{lem:Chebotarev}).
An explicit example here being
$$(x^2 - 2)(x^2 - 17)(x^2 - 34)=0.$$
See \cite{BB96}, \cite{Son08} and \cite{Son09} for other examples.
Our first result states that such schemes can indeed occur
as the Hilbert scheme of lines on a cubic surface.

\begin{theorem} \label{thm:cubic}
	Let $k$ be a number field. Then there exist smooth cubic surfaces
	over $k$ which fail the Hasse principle for lines.
\end{theorem}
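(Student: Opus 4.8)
The plan is to deduce the theorem from Lemma~\ref{lem:Chebotarev} together with the combinatorics of the $27$ lines as a $W(\Esix)$-set. Recall that for a smooth cubic surface $X$ over $k$ the Hilbert scheme $F(X)$ of its lines is a reduced $0$-dimensional scheme of degree $27$, and that $\Gal(\bar k/k)$ acts on its geometric points through a homomorphism $\rho\colon\Gal(\bar k/k)\to W(\Esix)\subset S_{27}$, well defined up to conjugacy, where $S_{27}$ permutes the lines. Then $F(X)$ has a $k$-point exactly when the image $G:=\rho(\Gal(\bar k/k))$ fixes one of the $27$ lines, and it has a $k_v$-point exactly when the corresponding decomposition group $D_v\le G$ fixes a line; moreover the decomposition group at any place unramified in the splitting field $L$ of $F(X)$ is cyclic, generated by Frobenius. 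So by Lemma~\ref{lem:Chebotarev} it suffices to: (i) find a subgroup $G\le W(\Esix)$ which fixes no line but all of whose cyclic subgroups fix a line; and (ii) realise $G$, acting on the $27$ lines in this way, as the image of $\rho$ for some smooth cubic surface over $k$ whose splitting field $L/k$ has \emph{all} of its decomposition groups fixing a line.

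The heart of step (i) is a constraint-satisfaction problem inside $W(\Esix)$. The group $(\ZZ/2)^2$ behind the classical example $(x^2-2)(x^2-17)(x^2-34)$ is of no use, because $27$ is odd: any $2$-group acting on the $27$ lines automatically fixes one, and so, more generally, does any $p$-subgroup of $W(\Esix)$ with $p\neq 3$. The prime $3$ is thus the only possible obstruction, and here the relevant fact is that among the order-$3$ elements of $W(\Esix)$ only those of Carter type $\Atwo$ fix a line (fixing nine of them), whereas those of type $2\Atwo$, those of type $3\Atwo$, and the elements of order $9$ fix no line. Consequently the condition in (i) forces $G$ to avoid all of these, so that its Sylow $3$-subgroup must be cyclic of order $3$ and of type $\Atwo$ — constraints which already exclude the obvious candidates. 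A suitable $G$ is produced instead by a finite search through the subgroups of $W(\Esix)$; its existence is the essential combinatorial input.

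For step (ii), over an arbitrary number field $k$, I would first realise $G$ as a Galois group $\Gal(L/k)$ by an extension that is tamely ramified, with ramification small enough that every decomposition group is a proper subgroup of $G$; such $L$ exist by standard constructions of Galois extensions with prescribed (here, mild) ramification, and one convenient device is to take $L$ a compositum of extensions realising the Sylow subgroups of $G$ with pairwise disjoint, mutually split ramification (the cubic factor being of odd degree). Then every decomposition group $D_v$ is either cyclic, if $v$ is unramified, or a $p$-group for a single prime $p\mid|G|$, if $v$ is ramified, and so fixes a line by (i) and the parity remark above. It remains to realise $L$ by an actual cubic surface. Here one works with an explicit rational family whose line scheme is forced to have Galois image inside the normaliser of $G$ — for instance the cyclic cubic surfaces $w^3=f(x,y,z)$, whose $27$ lines sit three over each of the nine flex tangents of the plane cubic $\{f=0\}$, so that the Galois action on the lines is governed by the Hesse configuration of the flexes together with some cube-root data and is visibly small — and one specialises the parameters to a $k$-point for which $F(X)$ has splitting field $L$ and Galois image $G$. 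The resulting smooth cubic surface has no line over $k$ but a line over every completion of $k$, and therefore fails the Hasse principle for lines.

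The main obstacle is this last step of (ii): a generic cubic surface has Galois group all of $W(\Esix)$, so one must cut the image of $\rho$ down to the small subgroup $G$ while still retaining enough moduli to prescribe the number field $L$. Exhibiting a sufficiently explicit family, and checking that the cut-down family surjects onto enough $G$-extensions of $k$ to satisfy the local conditions of (i), is the technical crux; by comparison the group theory of (i) (a finite computation) and the construction of $L$ are routine.
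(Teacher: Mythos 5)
Your step (i) and the overall reduction via Lemma~\ref{lem:Chebotarev} coincide with the paper's Proposition~\ref{prop:criterion}, but the proposal has genuine gaps at exactly the points you flag as the crux, and your step (ii) as written cannot be repaired. The finite search is not a formality (the analogous search for degree~$4$ del Pezzo surfaces returns \emph{no} admissible subgroup), and its outcome in degree~$3$ is that the only subgroups of $W(\Esix)$ fixing no line while every element fixes one are $D_5$, $\ZZ/5\ZZ\rtimes\ZZ/4\ZZ$ and $S_5$, with orbit types $[2,5,5,5,10]$ and $[2,5,10,10]$ --- all of order divisible by $5$ (so, incidentally, your claim that the Sylow $3$-subgroup of $G$ ``must be cyclic of order $3$'' fails: for two of the three groups it is trivial). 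This output is fatal for your proposed family: the Galois action on the $27$ lines of a cyclic cubic surface $w^3=f(x,y,z)$ preserves the partition of the lines into $9$ triples lying over the inflectional tangents of $\{f=0\}$, and the stabiliser of that partition in $W(\Esix)$ has order prime to $5$ (an element of order $5$ in $W(\Esix)$ fixes exactly $2$ of the $27$ lines, so it cannot preserve a partition into triples). Hence no cyclic cubic surface realises any admissible $G$, and your family provably contains no counter-example. The paper instead builds the surfaces by blowing up $\PP^2$ in a closed point of degree $2$ and one of degree $5$ in general position and contracting the line through the quadratic orbit (\S\ref{sec:constructions}); this is where the orbits of lengths $2$ and $5$ come from, and it sidesteps your ``cut down the generic $W(\Esix)$-image'' difficulty entirely.

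The control of decomposition groups at ramified places is also flawed. A compositum of extensions realising the Sylow subgroups of $G$ has nilpotent Galois group, so it cannot realise $D_5$, $\ZZ/5\ZZ\rtimes\ZZ/4\ZZ$ or $S_5$; and a tamely ramified decomposition group is metacyclic, not a $p$-group, so ``fixes a line by the parity remark'' does not follow even where the construction makes sense. The paper handles this with two genuinely different inputs: for solvable $G$, Sonn's theorem (Lemma~\ref{lem:solvable}) supplies a $G$-extension all of whose decomposition groups are cyclic, which suffices by step (i); for $S_5$, Kedlaya's quintic fields with squarefree discriminant (Lemma~\ref{lem:S_5}) force every ramified decomposition group to contain a normal subgroup of order $2$, which rules out the unique bad solvable subgroup $\ZZ/5\ZZ\rtimes\ZZ/4\ZZ$. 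You need some substitute for both of these ingredients, and the ones you propose do not work.
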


Our proof is constructive. For example, an 
explicit counter-example $S \subset \PP^3$ over $\QQ$ is given by
\begin{align} \label{eqn:cubic}
\begin{split}
 &{- 5x^2w} - 5xy^2 - 2xyw + 5xz^2 - 9xzw - 5xw^2 + 9y^3 - 11y^2z + 29y^2w\\
 & + 43yz^2 - 52yzw - 4yw^2 - 13z^3 + 14z^2w - 96zw^2 + 45w^3=0.
\end{split}
\end{align}

The reader may easily verify this assertion herself with some help from a computer.
There is a univariate polynomial $f \in \QQ[t]$ of degree $27$, the roots of which 
rationally parametrise the set of the $27$ lines on the surface over $\bar \QQ$ (namely
$f(t)=0$ is isomorphic to the Hilbert scheme of lines on $S$). 
The polynomial $f$ may be determined explicitly, using Gr\"obner bases.
It turns out that $f$ factorises into $4$ irreducible polynomials
$f = f_1f_2f_3f_4$ of degrees $2, 5, 10$, and $10$, respectively. 
In particular, there is no $\QQ$-rational line on $S$.

The fields defined by $f_1$ and $f_2$ are
isomorphic to $\QQ(\sqrt{5})$ and $\QQ(\sqrt[5]{101})$, respectively. Hence $f_1$ 
has a root over $\QQ_p$ if $p \equiv 1,4 \bmod 5$ or $p=\infty$, and $f_2$ has a root over 
$\QQ_p$ if $p \equiv 2,3,4 \bmod 5$ or $p=5$.
Thus $S$ contains a line over every completion of $\QQ$, i.e.~it is a counter-example
to the Hasse principle for lines. We will explain in 
the main body of this article how we found this surface, and give
a general method for constructing similar counter-examples.
To the authors' knowledge, this is the first known counter-example
to the Hasse principle for the Hilbert scheme of linear subspaces of positive
dimension on a hypersurface.

Note that a cubic surface contains a line over $k$ if and only if it 
contains a conic over $k$. We therefore obtain the following amusing corollary to Theorem~\ref{thm:cubic}.
\begin{corollary}
	Let $k$ be a number field. Then there exist smooth cubic surfaces
	over $k$ which fail the Hasse principle for conics.
\end{corollary}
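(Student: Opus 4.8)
The plan is to deduce the Corollary immediately from Theorem~\ref{thm:cubic}, combined with the elementary observation, valid over \emph{any} field $F$, that a smooth cubic surface in $\PP^3_F$ contains a line over $F$ if and only if it contains a geometrically integral conic over $F$. Granting this, let $S \subset \PP^3_k$ be a smooth cubic surface provided by Theorem~\ref{thm:cubic}, so that $S$ contains a line over every completion $k_v$ of $k$ but contains no line over $k$. Applying the observation with $F = k_v$ shows that $S$ contains a conic over every $k_v$, and applying it with $F = k$ shows that $S$ contains no conic over $k$; hence $S$ fails the Hasse principle for conics.

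It therefore remains to justify the observation. If $C \subset S$ is a geometrically integral conic over $F$, let $\Pi \cong \PP^2_F$ be the plane it spans, which is defined over $F$ since $C$ is; then $\Pi \cap S$ is a plane cubic curve containing $C$, and, $C$ being reduced of degree $2$, the residual component is an effective divisor of degree $1$ on $\Pi$, i.e.\ a line, necessarily defined over $F$. Conversely, if $\ell \subset S$ is a line over $F$, the pencil of planes in $\PP^3$ containing $\ell$ is a $\PP^1_F$, and for each such plane $\Pi$ one has $\Pi \cap S = \ell \cup C_\Pi$ with $C_\Pi$ a conic; since $S$ is smooth, the associated conic bundle has geometrically integral generic fibre, so $C_\Pi$ is geometrically integral for all but finitely many $\Pi$, and, $F$ being infinite (a number field or one of its completions), we may choose such a $\Pi$ over $F$, yielding a conic over $F$.

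I expect no genuine obstacle here: all the arithmetic content sits in Theorem~\ref{thm:cubic}, and the rest is projective geometry. The only points needing a little care are that the geometric line--conic equivalence must be applied uniformly over the global field and all of its completions (which it is, being insensitive to the base field), and that one should pin down the meaning of \emph{conic} — here a geometrically integral curve of degree $2$ — so that the residual-curve constructions behave as stated; with a looser notion one would additionally have to check that the counter-example surface of Theorem~\ref{thm:cubic} carries no $F$-rational pair of conjugate coplanar lines, but this is subsumed by the chosen convention.
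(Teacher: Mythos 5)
Your proposal is correct and follows exactly the paper's route: the paper deduces the corollary from Theorem~\ref{thm:cubic} via the same one-line observation that a smooth cubic surface contains a line over a field if and only if it contains a conic over that field (which the paper merely asserts, while you supply the residual-intersection and conic-bundle justification). No discrepancy to report.
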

Again to the authors' knowledge, this is the first result
of its kind in the literature. We are also able to prove results
about the distribution of counter-examples.
To state this, note that the coefficients of a cubic surface
determine a point in $\PP^{19}$. We therefore define
the height of a cubic surface over $k$ to be the usual naive height
of the corresponding point in $\PP^{19}$ (see e.g.~\cite[\S 2]{Ser97a}).

\begin{theorem} \label{thm:thin_cubic}
	Let $k$ be a number field. 
	\begin{enumerate}
		\item[(a)] The collection of smooth cubic surfaces
		over $k$ which fail the Hasse principle for lines is Zariski dense
		in $\PP^{19}$.
		\item[(b)] 	Nevertheless, when ordered by the height of their coefficients,
		$100 \%$ of smooth cubic surfaces satisfy the Hasse principle for lines.
	\end{enumerate}
\end{theorem}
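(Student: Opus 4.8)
The plan is to reduce both parts to a purely group-theoretic condition on the action of $W(E_6)$ on the $27$ lines. For a smooth cubic surface $S$ over $k$, let $G_S\leq W(E_6)$ be the image of the natural action $\Gal(\bar k/k)\to W(E_6)$ on the $27$ lines of $S_{\bar k}$, well defined up to conjugacy; the Hilbert scheme of lines of $S$ is the reduced $0$-dimensional $k$-scheme whose $\bar k$-points carry this $G_S$-action. By Lemma~\ref{lem:Chebotarev} (an application of Chebotarev's density theorem), this scheme has a $k_v$-point for every place $v$ of $k$ if and only if every element of $G_S$ fixes one of the $27$ lines, and it has a $k$-point if and only if $G_S$ fixes a line. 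Hence $S$ fails the Hasse principle for lines precisely when $G_S$ is what I shall call \emph{bad}: every cyclic subgroup of $G_S$ has a fixed point on the $27$ lines, but $G_S$ itself does not. Bad subgroups exist by Theorem~\ref{thm:cubic} --- the Galois group of the degree-$27$ polynomial $f$ there, with orbits of sizes $2,5,10,10$ on the lines, is one --- and I fix such a $G$ from now on.

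\textbf{Part (b).} The group $W(E_6)$ is itself not bad: it acts transitively on the $27$ lines, so fixes none, but it contains a fixed-point-free element --- for instance one of order $9$, which acts on the lines as three disjoint $9$-cycles, as one reads off the cycle types of $W(E_6)$ on the $27$ lines. Consequently every counter-example $S$ satisfies $G_S\subsetneq W(E_6)$. Now the universal Hilbert scheme of lines over the dense open locus of smooth cubics in $\PP^{19}$ is finite \'etale of degree $27$, and it is classical that the Galois group of the $27$ lines of the generic cubic surface --- over the function field $k(\PP^{19})$, and already over $\bar k(\PP^{19})$ --- is the whole of $W(E_6)$. Therefore $\{S\in\PP^{19}(k): G_S\subsetneq W(E_6)\}$, which contains all counter-examples, is a thin subset of $\PP^{19}(k)$ in the sense of Serre: it lies in the finite union, over the maximal subgroups $M<W(E_6)$, of the images of the $k$-points of the corresponding degree-$[W(E_6):M]$ covers of $\PP^{19}$. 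By the quantitative form of Hilbert's irreducibility theorem (Cohen, Serre) a thin subset of $\PP^{19}(k)$ contains only $o(B^{20})$ points of height at most $B$, whereas $\PP^{19}(k)$ itself contains $\asymp B^{20}$ such points by Schanuel's theorem, and the singular cubics, which lie on a proper closed subvariety, contribute only $o(B^{20})$. Thus the counter-examples form a density-zero subset of the smooth cubic surfaces ordered by height, which proves (b).

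\textbf{Part (a).} Here one must produce counter-examples Zariski-densely in $\PP^{19}$. Let $\mathcal{M}$ denote the moduli space of marked cubic surfaces over $k$: smooth cubic surfaces $S$ together with an isometry $\Pic(S_{\bar k})\xrightarrow{\sim}\Lambda$ onto the standard $E_6$-lattice $\Lambda$ fixing the anticanonical class. It is a $4$-dimensional $k$-rational variety --- sending four of the six blow-down points to a standard frame exhibits it as an open subvariety of $\PP^2\times\PP^2$ --- and $W(E_6)$ acts on it by $k$-automorphisms through its action on markings. Fix a continuous homomorphism $\rho\colon\Gal(\bar k/k)\to W(E_6)$ with image the bad subgroup $G$, and form the twist ${}^{\rho}\mathcal{M}$; its $k$-points are exactly the smooth cubic surfaces over $k$ whose Galois action on the $27$ lines is conjugate to $\rho$, hence are counter-examples. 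Any smooth cubic surface $S/k$ is recovered inside $\PP^3$ from its anticanonical model upon choosing a $k$-basis of $H^0(S,-K_S)$ --- a $\PGL_4(k)$-worth of choices --- and every smooth cubic in $\PP^3$ arises this way, so the counter-examples in $\PP^{19}$ contain the image, under the resulting dominant re-embedding morphism to $\PP^{19}$, of ${}^{\rho}\mathcal{M}(k)\times\PGL_4(k)$. Since $\PGL_4(k)$ is Zariski-dense in $\PGL_4$, it now suffices to choose $\rho$ so that ${}^{\rho}\mathcal{M}(k)$ is Zariski-dense in ${}^{\rho}\mathcal{M}$: its image in $\PP^{19}$ is then Zariski-dense and consists of counter-examples, which gives (a). The density of ${}^{\rho}\mathcal{M}(k)$ would be secured by choosing $G$ together with a birational model of $\mathcal{M}$ on which the $G$-action is linear, making the twist manifestly rational; alternatively, in the spirit of Theorem~\ref{thm:cubic}, one runs its explicit construction in a family whose auxiliary input --- the small-degree number fields cutting out the short orbits, together with the linear data placing the resulting configuration of lines inside a cubic surface --- ranges over an open subset of an affine space dominating $\PP^{19}$.

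\textbf{Main obstacle.} Part (b) is a clean application of quantitative Hilbert irreducibility, needing only two elementary finite verifications --- that $W(E_6)$ contains a fixed-point-free element on the $27$ lines and that it is the full monodromy group over $\PP^{19}$. The substantive difficulty is the density statement in (a): one must understand the twisted fourfold ${}^{\rho}\mathcal{M}$ --- equivalently, the interaction of the $W(E_6)$-action on the rational moduli of marked cubic surfaces with Galois twisting --- well enough to force a Zariski-dense set of $k$-points, and I expect this to require a careful choice of the bad subgroup $G$ and of explicit coordinates in which the construction underlying Theorem~\ref{thm:cubic} becomes a morphism from an open subset of affine space.
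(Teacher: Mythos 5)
Your part (b) is essentially the paper's own argument and is correct: every counter-example has non-transitive (in particular proper) monodromy image in $W(\Esix)$, the generic cubic surface has irreducible Hilbert scheme of lines, so the counter-examples lie in a thin subset of $\PP^{19}(k)$, and thin sets have density zero. (The paper works with the slightly smaller thin set $\{S : \mathcal{L}(S)\ \text{reducible}\}$, needing only transitivity of the monodromy rather than full $W(\Esix)$, via irreducibility of the universal family of lines $\mathcal{L}_{3,k}\to\mathcal{H}_{3,k}$ and Hilbert irreducibility; your variant is fine.) One small imprecision: Lemma~\ref{lem:Chebotarev} characterises solubility at \emph{all but finitely many} places, not at all places, so ``bad'' $G_S$ does not by itself imply failure of the Hasse principle --- but for (b) you only use the converse direction (counter-example $\Rightarrow$ bad), which is valid.

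For part (a) there is a genuine gap, and it is exactly the one you flag yourself: you reduce the Zariski density of counter-examples to the Zariski density of $k$-points on the twist ${}^{\rho}\mathcal{M}$ of the moduli space of marked cubic surfaces, and then do not prove that density. Neither of your two suggestions is carried out: linearising the $W(\Esix)$-action (or even the $G$-action for a specific bad $G$) on a birational model of $\mathcal{M}$ so that the twist is visibly rational is a nontrivial open-ended problem, and making the explicit construction of Theorem~\ref{thm:cubic} into a dominant morphism from an open subset of affine space to $\PP^{19}$ requires a dominance verification you do not supply. In addition, even granting density of ${}^{\rho}\mathcal{M}(k)$, its points are only guaranteed to have lines over $k_v$ for $v$ unramified in the splitting field cut out by $\rho$; to get genuine counter-examples you must also choose $\rho$ so that the ramified places are soluble (e.g.\ via Lemma~\ref{lem:solvable} when $G$ is solvable) --- fixable, but unaddressed. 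The paper avoids the moduli-theoretic route entirely: it shows (Lemma~\ref{lem:approximation_2}, using weak approximation on $\PP^2$ to approximate split configurations of blow-up points by closed points with prescribed residue fields) that for infinitely many primes $\mathfrak{p}$, \emph{every} split cubic surface over $\FF_{\mathfrak{p}}$ lifts to a counter-example over $k$; since split surfaces contribute $\gg N(\mathfrak{p})^{19}$ points of $\mathcal{H}_3(\FF_{\mathfrak{p}})$ (Lang--Weil applied to the irreducible universal family of lines), whereas any proper closed subvariety has only $O(N(\mathfrak{p})^{18})$ points, the counter-examples cannot lie in a proper closed subvariety. You would need either to adopt an argument of this kind or to actually establish the density of ${}^{\rho}\mathcal{M}(k)$ for some bad $G$ before part (a) can be considered proved.
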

We prove Zariski density by showing that the set of the reductions of the counter-examples
modulo suitable prime ideals of $k$ is rather large.
The second part of this theorem follows from the fact
that $100 \%$ of cubic surfaces have an irreducible Hilbert scheme
of lines, together with the fact that irreducible finite \'{e}tale
schemes satisfy the Hasse principle (see Lemma \ref{lem:HP}).

Cubic surfaces belong to a natural class of surfaces, namely they are
the del Pezzo surfaces of degree $3$.
Our next result solves the analogous problem for del Pezzo surfaces in general,
where one has a natural notion of a line (see \S \ref{sec:lines}).

\begin{theorem} \label{thm:main}
	Let $k$ be a number field and let $1 \leq d \leq 9$.
	Then the class of del Pezzo surfaces of degree $d$ fails
	the Hasse principle for lines if and only if
	$$d = 8,5,3,2 \text{ or } 1.$$
\end{theorem}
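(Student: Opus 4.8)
The plan is to translate the statement into a problem about finite permutation groups and then resolve it degree by degree. Recall from \S\ref{sec:lines} that for a del Pezzo surface $X$ of degree $d$ over $k$, the Galois action on the set $\mathcal{L}$ of lines factors through the Weyl group $W(R_d)$ of the root system $R_d = K_X^\perp \subset \Pic \bar X$ --- empty for $d = 9, 8$, and equal to $\Aone$, $\Atwo \times \Aone$, $\Afour$, $\Dfive$, $\Esix$, $\Eseven$, $\Eeight$ for $d = 7, 6, 5, 4, 3, 2, 1$ --- and that the Hilbert scheme of lines $F(X)$ is finite \'etale. By Lemma~\ref{lem:HP} an irreducible finite \'etale scheme satisfies the Hasse principle, so any counterexample forces the Galois image $G \leq W(R_d)$ to act intransitively on $\mathcal{L}$; and by the local description of rational points on a finite \'etale scheme together with Chebotarev (Lemma~\ref{lem:Chebotarev}), $F(X)$ acquires a point over every completion of $k$ if and only if every element of $G$ fixes a line, while $F(X)(k) \neq \emptyset$ if and only if $G$ fixes a line. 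So the first step is to observe that the class of degree-$d$ del Pezzo surfaces fails the Hasse principle for lines if and only if there is a subgroup $G \leq W(R_d)$ with $(i)$ no common fixed line, $(ii)$ every element fixing some line, and $(iii)$ $G$ realised as the Galois image on the lines of a genuine degree-$d$ del Pezzo surface over $k$ that moreover has a local line at each of the finitely many ramified and bad places.

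For the degrees where the principle should hold ($d = 9, 7, 6, 4$), I would argue as follows. For $d = 9$ there are no lines, so the principle holds vacuously. For $d = 7$, $W(\Aone)$ only interchanges the two exceptional curves and fixes the third line $L_{12} = H - E_1 - E_2$, so there is always a rational line. For $d = 6$, $W(\Atwo \times \Aone)$ acts on the six lines as the dihedral symmetry group of the hexagon they form; every nonidentity symmetry except the three reflections fixing a pair of opposite vertices moves every line, so a subgroup in which every element fixes a line must be $\langle s\rangle$ for one such reflection $s$, and then fixes the two lines fixed by $s$, contradicting $(i)$. The substantive case is $d = 4$: here $W(\Dfive)$ is transitive on the $16$ lines, which I would identify with the even-weight vectors of $\FF_2^5$ (the group acting by coordinate permutations and translations by even-weight vectors). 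Then $(ii)$ makes $G \to S_5$ injective --- a nonzero translation is fixed-point-free --- so $G$ is the graph of a cocycle of $\bar G = \operatorname{im}(G \to S_5)$, and $G$ fixes a line exactly when that cocycle is a coboundary; using that $(ii)$ forces the cocycle to restrict to an even-weight vector on every cycle of every element of $\bar G$, a short case check over the subgroups of $S_5$ (by hand or machine) should show it is always a coboundary, so the principle holds.

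For the degrees where the principle fails ($d = 8, 5, 3, 2, 1$), I would construct examples directly. For $d = 8$, take $X = C_1 \times C_2$ for conics $C_1, C_2$ over $k$: the lines of $X$ are the fibres of the two projections and $F(X) \cong C_1 \sqcup C_2$, so choosing the quaternion algebras of $C_1$ and $C_2$ to ramify at disjoint nonempty sets of places --- possible by class field theory --- gives $C_i(k) = \emptyset$ for both $i$ but $C_1(k_v) \neq \emptyset$ or $C_2(k_v) \neq \emptyset$ for every $v$. For $d = 5, 2, 1$, blow up suitable Galois-stable sets of points of $\PP^2$ in general position: a single orbit of four points with splitting group $V_4 \leq S_4 \leq S_5 = W(\Afour)$ for $d = 5$; a $C_3$-orbit of three points together with a $V_4$-orbit of four points, so $G = C_3 \times V_4 \leq S_7 \leq W(\Eseven)$, for $d = 2$; and a single orbit of eight points with splitting group $\mathrm{AGL}_1(\FF_8) \leq S_8 \leq W(\Eeight)$ for $d = 1$. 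In each case one verifies that no line among the $10$, $56$, resp.\ $240$ is $G$-invariant --- equivalently, that no invariant $2$-subset, $5$-subset, etc.\ of the point set exists, which is exactly what these groups are chosen to ensure --- while every element of $G$ fixes some line: for $d = 5$ a double transposition $(ij)(kl)$ fixes $L_{ij}$; for $d = 1$ a fixed-point-free involution contains a transposition and fixes the corresponding $L_{ij}$, and an element of order $7$ fixes the exceptional curve over its unique fixed point; for $d = 2$ the $V_4$-factor always supplies a fixed point or a $2$-cycle on the four-point orbit. These groups are solvable, hence occur as Galois groups over any number field, and the finitely many exceptional places are handled by arranging cyclic decomposition groups there, where $(ii)$ already provides a local line. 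The case $d = 3$ is Theorem~\ref{thm:cubic}, realised explicitly by \eqref{eqn:cubic}, whose lines split into Galois orbits of sizes $2, 5, 10, 10$.

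The hardest part will be the affirmative direction for $d = 4$: one must rule out every subgroup of $W(\Dfive)$, and this is a genuine feature of this group rather than a soft fact --- the naive implication ``every element of a permutation group has a fixed point $\Rightarrow$ the group has a common fixed point'' is false in general (it fails already for $V_4$ on six points). On the negative side the real labour is in the large Weyl groups $W(\Eseven)$ and $W(\Eeight)$: identifying the right solvable subgroup, checking conditions $(i)$ and $(ii)$ on the $56$ and $240$ lines, and --- since Chebotarev controls only the unramified places --- engineering a local line at the finitely many remaining places by imposing suitable local conditions on the defining data.
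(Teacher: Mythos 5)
Your overall architecture is the same as the paper's: reduce via Chebotarev (Lemmas \ref{lem:decomposition}, \ref{lem:Chebotarev}) to the combinatorial criterion that every element of the Galois image fixes a line while the whole group fixes none, classify the admissible subgroups of the relevant Weyl group, realise them by blowing up Galois-stable point sets in general position, and handle the finitely many ramified places by forcing cyclic decomposition groups (Sonn's construction, Lemma \ref{lem:solvable}). Within that framework there are two genuine gaps. First, your treatment of $d=9$ is wrong under the definition of line used here (Definition \ref{def:line}): a degree $9$ del Pezzo surface has lines, namely the curves $L$ with $L\cdot(-K_S)=3$, and $\mathcal{L}(S)$ is itself a Severi--Brauer surface (Proposition \ref{prop:Hilbert}); the Hasse principle in this case is not vacuous but is exactly Ch\^atelet's theorem. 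Your own $d=8$ argument uses the analogous convention (the rulings of $C_1\times C_2$ count as lines), so you cannot consistently declare $\PP^2$-twists line-free; moreover the ``finite permutation group'' reduction simply does not apply when $\mathcal{L}(S)$ is positive-dimensional. Second, the $d=4$ case --- which you correctly identify as the hard affirmative direction --- is only reduced to the assertion that a certain cocycle valued in the even-weight subspace of $\FF_2^5$, trivial on every cyclic subgroup, is a coboundary; you do not verify this. The reduction itself is sound and is essentially the group-cohomological shadow of the paper's torsor argument (Proposition \ref{prop:torsor} and Proposition \ref{prop:Sha}, where the splitting $\Res_{K/k}(\mu_2)\cong\mu_2\times\Res^1_{K/k}(\mu_2)$ for $K$ of odd degree plus Shapiro's lemma reduces everything to the fact that a quadratic extension is non-split at infinitely many places), but as written it is an unproved claim. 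Relatedly, for $d=3$ you defer entirely to the explicit surface \eqref{eqn:cubic}; that is a legitimate witness, but it hides the only genuinely non-blow-up-of-points construction in the proof (degree $2$ and degree $5$ points, an auxiliary degree $2$ del Pezzo surface, and a contraction), and the $S_5$ realisation needs Kedlaya's squarefree-discriminant fields rather than Sonn, since $S_5$ is not solvable.

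Where your proposal genuinely diverges from the paper, it checks out and is in places cleaner. For $d=2$ you take $G=C_3\times V_4$ acting on a $3$-orbit and a $4$-orbit of blown-up points, and for $d=1$ a single $8$-point orbit with group $\mathrm{AGL}_1(\FF_8)$; I verified that in both cases $2$-transitivity (resp.\ the orbit structure) rules out any invariant line among the $56$ resp.\ $240$, that every element (of order $1$, $2$ or $7$ in the $\mathrm{AGL}_1(\FF_8)$ case) fixes a line, and that these solvable groups are amenable to Lemma \ref{lem:solvable}. The paper instead uses two closed points of degrees $3$ and $4$ (resp.\ $4$ and $4$) and additionally exhibits \emph{minimal} counter-examples via conic bundles; your single-orbit degree $1$ example is a nice economy, though it only produces non-minimal surfaces. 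Your hexagon argument for $d=6$ is a tidier version of the paper's check on the two non-cyclic non-transitive subgroups of $D_6$. Fixing the $d=9$ case (cite Ch\^atelet) and actually carrying out the $\Dfive$ coboundary verification would make this a complete proof.
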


Our results are actually stronger than stated here.
Namely, we classify all possible cases where counter-examples can occur
in terms of Galois actions on lines, and show that, at least when
$d \geq 3$, counter-examples of these types actually exist.
We also prove an analogue of Theorem \ref{thm:thin_cubic}, by showing
that the collection of counter-examples is a Zariski dense, yet thin, subset of a suitable
Hilbert scheme (see Theorem \ref{thm:distribution}). This is one of the 
main results in this paper, and it says that whilst there are many counter-examples,
they are still in some sense quite rare. 
See \S \ref{sec:main} and \S \ref{sec:thin} for our complete results on del Pezzo surfaces.

We now explain the ideas behind the proof of Theorem \ref{thm:main}. The cases
of degrees $9$ and $8$ are handled using some classical geometric methods.
For the other cases, we use the Chebotarev density theorem to 
give an explicit criterion on the Galois group of the 
splitting field of the surface, in terms of its action
on the graph of lines of the surface, for the surface to be a counter-example
to the Hasse principle for lines. This reduces
the problem to a purely group theoretic one about classifying
certain group actions on certain graphs, which we perform
using a computer.
In the case of degree $7,6$ or $4$, one finds that this criterion
is never satisfied, hence the Hasse principle for lines always holds.

In degrees $5,3,2$ and $1$, we find explicit groups
which satisfy our criterion. A priori however, there is no guarantee 
that there exist del Pezzo surfaces with these Galois actions over $k$
(this is the ``inverse Galois problem'' for del Pezzo surfaces).
In degrees $5$ and $3$, there are very few cases,
and moreover the surfaces which arise this way are rational.
Thus we construct surfaces with these Galois actions
by performing blow-ups and blow-downs of configurations
of closed points in $\PP^2$. These geometrical constructions yield surfaces
with a line over all but finitely many completions of $k$, yet no line over~$k$.
To obtain counter-examples to the Hasse principle for lines, we need quite strict control
over the decomposition groups of the splitting fields
of the blown-up points. When the Galois group is solvable, this is achieved using results of Sonn \cite{Son08}.
The only non-solvable group which occurs is $S_5$, and here we use
work of Kedlaya on the construction of quintic fields with square-free discriminants
\cite{Ked12}. For del Pezzo surfaces of degrees $2$ and $1$ many more cases
arise. We therefore illustrate the range of behaviour which occurs
by constructing both rational and minimal counter-examples (i.e.~surfaces
for which no Galois orbit of lines can be contracted). These latter
surfaces arise as conic bundles.


Note that Theorem \ref{thm:main} implies that del Pezzo surfaces
of degree $4$ satisfy the Hasse principle for lines, something which
is by no means obvious. The authors were able to find a non-computer-assisted proof of this
result, and moreover generalise it to intersections of two quadrics of even dimension.

\begin{theorem} \label{thm:two_quadrics}
	Let $n \geq 0$ and let $k$ be a number field.
	Let $X$ be a smooth $2n$-dimensional complete intersection of two quadrics over $k$.
	If $X$ contains no $n$-plane over $k$, then it contains
	no $n$-plane over infinitely many completions of~$k$.
	In particular, $X$ satisfies the Hasse principle for $n$-planes.
\end{theorem}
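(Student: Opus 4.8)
The plan is to study the variety $F$ parametrising $n$-planes on $X$ via the classical theory of the discriminant pencil. Write the pencil of quadrics containing $X$ as $\lambda Q_0 + \mu Q_1$, and let $\Delta \subset \PP^1$ be the degeneracy locus, i.e.\ the zero locus of $\det(\lambda Q_0 + \mu Q_1)$, a binary form of degree $2n+2$. When $X$ is smooth this form has distinct roots, so $\Delta$ is a reduced set of $2n+2$ points, defining an \'etale $k$-algebra $L$ of degree $2n+2$, equivalently a double cover $C \to \PP^1$ branched at $\Delta$; here $C$ is a smooth hyperelliptic curve (of genus $n$), and over each point of $\Delta$ the corresponding quadric in the pencil has rank $2n+1$. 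The key geometric input, which goes back to work of Reid and is well documented for intersections of two quadrics, is that $F$ is a torsor under the Jacobian $\Jac(C)$; more precisely there is a canonical morphism $F \to \Jac(C)$ realising $F$ as a $\Jac(C)$-torsor, and in particular $F$ has a $k$-point if and only if the corresponding class in $H^1(k,\Jac(C))$ is trivial, while $F$ is geometrically connected of dimension $n$.

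With this in hand, the strategy is to localise and run a counting/parity argument on $\Delta$. For a place $v$ of $k$, $X$ contains an $n$-plane over $k_v$ if and only if the class of $F$ in $H^1(k_v,\Jac(C))$ vanishes. The crucial observation is a local criterion in terms of the splitting behaviour of $\Delta$ (equivalently, of how the hyperelliptic curve $C$ behaves) at $v$: I expect that for all but finitely many $v$, non-triviality of the torsor $F_{k_v}$ is governed by whether a certain Frobenius element acts on the $2n+2$ points of $\Delta$ — or on the $2$-torsion $\Jac(C)[2]$, which is built from the even-index subsets of $\Delta$ — with a particular parity. Concretely, one reduces to the following statement about the degree-$(2n+2)$ \'etale algebra $L$: if $L$ is not "split enough" over $k$ to produce an $n$-plane (which is exactly the hypothesis that $X$ has no $n$-plane over $k$, after translating the torsor condition), then by Chebotarev there are infinitely many primes $v$ whose Frobenius in $\Gal$ acting on $\Delta$ lands in the obstructing conjugacy class, and for those $v$ the torsor $F_{k_v}$ remains non-trivial, i.e.\ $X$ has no $n$-plane over $k_v$. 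This is the analogue, for the Jacobian torsor $F$, of Lemma~\ref{lem:Chebotarev} for $0$-dimensional schemes.

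The two steps I expect to be the main obstacles are: (i) pinning down the \emph{exact} local condition for the vanishing of the torsor class at a place $v$ purely in terms of the Galois action on $\Delta$ (or on $\Jac(C)[2]$), uniformly over all $n$ — one wants a clean parity statement, and the honest work is checking that over $k_v$ the obstruction in $H^1(k_v,\Jac(C))$ is detected by its image under the connecting map from $\Jac(C)[2]$, i.e.\ that nothing is lost by passing to $2$-descent, at least at the unramified places that Chebotarev will feed us; (ii) ensuring that the global hypothesis "$X$ contains no $n$-plane over $k$" really does force the Galois action on $\Delta$ to be incompatible with the relevant cohomological triviality, so that the obstructing Frobenius conjugacy class is non-empty and Chebotarev can be applied. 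I would handle (ii) by a direct linear-algebra analysis of the pencil: an $n$-plane on $X$ corresponds, over $\bar k$, to a choice of a maximal isotropic subspace compatible across the pencil, and such a choice descends to $k$ precisely when one can make a Galois-equivariant selection; the absence of a $k$-rational $n$-plane then pins down a specific non-trivial element of a suitable $\FF_2$-vector space with Galois action, whose support on $\Delta$ yields the needed conjugacy class. Once both points are in place, the final sentence — the Hasse principle for $n$-planes — is immediate, since we will have shown that failure to have an $n$-plane over $k$ propagates to infinitely many, hence at least one, completion.
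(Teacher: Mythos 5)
There is a fatal error in the geometric setup, and it is not a repairable detail. For a smooth \emph{$2n$-dimensional} complete intersection of two quadrics $X \subset \PP^{2n+2}$, the matrices $Q_0,Q_1$ are of size $2n+3$, so the degeneracy locus $\Delta \subset \PP^1$ has degree $2n+3$ (odd), not $2n+2$; and the Hilbert scheme of $n$-planes is a \emph{finite} reduced scheme of degree $2^{2n+2}$ (e.g.\ the $16$ lines on a del Pezzo surface of degree $4$ when $n=1$), not a geometrically connected variety of dimension $n$. The picture you describe --- $\Delta$ of even degree, a hyperelliptic double cover $C \to \PP^1$, and the Fano variety of maximal linear subspaces as a torsor under $\operatorname{Jac}(C)$ --- is the correct picture for the \emph{odd}-dimensional case $X^{2n+1} \subset \PP^{2n+3}$, transplanted to the wrong parity. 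This matters because the theorem is \emph{false} in the odd-dimensional case: there the Hilbert scheme of $n$-planes is a torsor under the intermediate Jacobian, and already for $n=0$ one gets genus-one curves, which notoriously fail the Hasse principle. So no argument routed through ``torsor under an abelian variety, detected by $2$-descent and Chebotarev'' can prove the statement; indeed your step (i), that the local obstruction in $H^1(k_v,\operatorname{Jac}(C))$ is always seen by the image of $\operatorname{Jac}(C)[2]$, is exactly the kind of assertion that breaks down and is responsible for counter-examples to the Hasse principle for genus-one curves.

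The correct mechanism, and the one the paper uses, is that $\mathcal{L}(X)$ is a torsor under the \emph{finite} group scheme $\Res_{K/k}(\mu_2)/\mu_2$, where $K$ is the \'etale algebra of the degree-$(2n+3)$ discriminant (each singular quadric of the pencil, being a cone with vertex off $X$, gives an involution of $X_{\bar k}$, and these generate a $(\ZZ/2\ZZ)^{2n+3}$-action descending to $\Res_{K/k}(\mu_2)$; simple transitivity on the $n$-planes is Reid's theorem). The whole point is the \emph{oddness} of $\deg K = 2n+3$: it forces the sequence $1 \to \mu_2 \to \Res_{K/k}(\mu_2) \to \Res_{K/k}(\mu_2)/\mu_2 \to 1$ to split (via the norm-one subgroup), so $\Sha_S$ of the quotient embeds into $\Sha_S(k,\Res_{K/k}(\mu_2))$, which vanishes by Shapiro's lemma and the fact that a quadratic extension is non-split at infinitely many places. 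Your closing intuition about an $\FF_2$-vector space with Galois action supported on $\Delta$ is pointing at this finite group scheme, but as written the proposal's central claims (degree of $\Delta$, dimension of $F$, the group acting) are incorrect, and the proposed local-global analysis does not go through.
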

The planes considered in Theorem \ref{thm:two_quadrics} are in some
respects the most interesting, since they are the linear subspaces
of maximal dimension which can occur. There are various
results already in the literature on the existence of linear subspaces on intersections
of quadrics. For example, an application of the version of the circle method used in \cite{Bra14}
would prove the Hasse principle for $r$-planes when $r$ has order of magnitude $O(\sqrt{n})$ and $k=\QQ$.
Much more is known when~$k$ is totally imaginary; namely, as explained
in \cite[(2.11)]{DW03}, one may combine \cite[Cor.~10.4]{CTSSD87}, \cite[Cor.~2.4]{Lee84} and 
\cite[Thm.~1]{Mar97} to obtain
the existence of an $r$-plane here for $r$ of the size $2n/3 + O(1)$.
In particular, Theorem~\ref{thm:two_quadrics} is out of reach
from what these methods are currently able to achieve.
It is proved by showing that the Hilbert scheme of
$n$-planes on $X$ is a torsor for a group scheme whose Tate-Shafarevich
group is trivial. 

The analogue of Theorem \ref{thm:two_quadrics} fails for
smooth $(2n+1)$-dimensional complete intersections of two quadrics over $k$
(the linear subspaces of maximal dimension being $n$-planes). Here the Hilbert
scheme of $n$-planes is a torsor for  the intermediate Jacobian of $X$.
For $n=0$, we obtain a curve of genus $1$, and it is well-known that these can fail
the Hasse principle (see \cite[p.~164]{Sko01} for an explicit example over $\QQ$).
In private communication, Damiano Testa has informed the authors 
that he is able to construct counter-examples of dimension $3$ over $\QQ$.

\subsection*{Outline of the paper} In Section \ref{sec:0-dim} we gather results
on the Hasse principle for finite \'{e}tale schemes and the structure of the Hilbert
scheme of lines on del Pezzo surfaces. Theorem \ref{thm:main} is then
proved in Section \ref{sec:main}. Theorem \ref{thm:thin_cubic}, together
with its generalisation Theorem \ref{thm:distribution}, is proved in Section \ref{sec:thin}.
Section \ref{sec:conceptual} is dedicated to the proof of Theorem \ref{thm:two_quadrics}.

\subsection*{Acknowledgements}
The authors have benefited from conversations with
Julia Brandes, Jean-Louis Colliot-Th\'{e}l\`{e}ne, Rainer Dietmann, 
Jordan Ellenberg, Andreas-Stephan Elsenhans, Christopher Frei, David Speyer, Anthony V\'{a}rilly-Alvarado
and Trevor Wooley. All computations are with \texttt{Magma} \cite{BCP97}.

\subsection{Notation} \label{sec:notation}
For a scheme $X$ of finite type over a field $k$, we denote by $\pi_0(X)$ the scheme of
connected components of $X$ (see \cite[Def.~10.2.19]{Liu02}). For a field extension
$k \subset K$, we denote by $X_K = X \times_k K$ the base change of $X$ to $K$.

For a number field $k$, we denote its ring of integers by $\OO_k$.
For a place $v$ of $k$, we shall denote by $k_v$
the completion of $k$ with respect to $v$. For a non-zero prime ideal $\mathfrak{p}$
of $k$, we denote by $\FF_\mathfrak{p}$ the corresponding residue field
and by $\OO_{k,\mathfrak{p}}$ the \emph{localisation} of $\OO_k$ at $\mathfrak{p}$.

Bold math letters (e.g.~$\mathbf{A}_n$) are used for root
systems, while for groups we use non-bold math letters
(e.g. $A_n$ is the alternating group on $n$ letters).

\section{\texorpdfstring{$0$}{0}-dimensional schemes and del Pezzo surfaces} \label{sec:0-dim}
We begin with some results on
the arithmetic of finite \'{e}tale schemes. 
Our interest in these
stems from the fact that they occur as the Hilbert schemes of lines
on del Pezzo surfaces. We finish this section by gathering some results 
on the geometry of del Pezzo surfaces, together with results on 
their Hilbert schemes and graphs of lines.

\subsection{Finite \'{e}tale schemes} 
Let $X$ be a finite \'{e}tale scheme over a perfect field~$k$
(i.e.~$X$ is a finite reduced $0$-dimensional scheme over $k$). 
We define the degree of $X$ to be 
$$\deg X = \sum_{x \in X} [k(x):k],$$
where the sum is over all points $x$ of $X$,
and $k(x)$ denotes the residue field of $x$.
We shall say that such a scheme $X$ is \emph{split} if 
$$\#X(k) = \deg X.$$
The smallest field $K$ over which $X$ becomes split is called 
the splitting field of~$X$. This is a finite Galois extension of $k$.

\subsubsection{A dictionary}

There is a useful well-known dictionary for such schemes.
Namely, let $K/k$ be a finite Galois extension with Galois group $\Gamma$
and let $X$ be a finite \'{e}tale scheme over $k$ with splitting field $K$. 
Firstly, to $X$ one may associate the $\Gamma$-set (i.e.~a set
with a left action of $\Gamma$) given by $X(K)$ equipped with its natural $\Gamma$-action.
Secondly, we may also associate to $X$ the finite \'{e}tale $k$-algebra
$$ \prod_{x \in X} k(x).$$
These constructions give rise to bijections between the following sets:
\begin{enumerate}
	\item The set of $k$-isomorphism classes of finite \'{e}tale schemes 
	over $k$ with splitting field $K$.
	\item The set of $k$-isomorphism classes of finite \'{e}tale $k$-algebras with splitting field $K$.
	\item The set of $\Gamma$-isomorphism classes of finite $\Gamma$-sets with a faithful 
	action of~$\Gamma$.
\end{enumerate}
See \cite[Thm.~V.6.4]{Bou81} and \cite[Prop.~V.10.12]{Bou81}.
We shall frequently use this dictionary to pass between 
these different objects in this paper. One trivial, though key, property
is that $X$ admits a $k$-point if and only if $\Gamma$ acts with a fixed point
on the associated $\Gamma$-set $X(K)$.

\subsubsection{The Hasse principle} \label{sec:HP}

We now gather some mostly well-known results on the Hasse principle for finite \'{e}tale
schemes, via the use of our dictionary. Throughout the remainder of this section $k$
is a number field.  We begin with a criterion for 
local solubility at a given place.

\begin{lemma}\label{lem:decomposition}
	Let $X$ be a finite \'{e}tale scheme over $k$. Let $K/k$
	denote the splitting field, with Galois group $\Gamma$, and let 
	$v$ be a place of $k$. Then $X$ admits a $k_v$-point if and only for
	any place $w$ of $K$ above $v$, the decomposition group of $w$ acts with a
	fixed point on the associated $\Gamma$-set $X(K)$.
\end{lemma}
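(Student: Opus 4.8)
The plan is to use the dictionary between finite \'etale schemes and $\Gamma$-sets, translating the question of a $k_v$-point into a statement about fixed points under a decomposition group. First I would observe that $X$ admits a $k_v$-point if and only if the base change $X_{k_v}$ admits a $k_v$-point; and since $X_{k_v}$ is again finite \'etale, by the dictionary (applied now over $k_v$) this is equivalent to the absolute Galois group $\Gal(\bar{k_v}/k_v)$ acting with a fixed point on $X(\bar{k_v})$. The point is then to identify this $\Gal(\bar{k_v}/k_v)$-set with the restriction of the $\Gamma$-set $X(K)$ along a suitable homomorphism $\Gal(\bar{k_v}/k_v) \to \Gamma$.

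The key step is the following standard fact about decomposition groups: fix a place $w$ of $K$ above $v$ and an embedding $\bar{k} \hookrightarrow \bar{k_v}$ extending $K \hookrightarrow K_w$; this induces an isomorphism of the decomposition group $\Gamma_w \subset \Gamma$ with $\Gal(K_w/k_v)$, and the resulting composite $\Gal(\bar{k_v}/k_v) \twoheadrightarrow \Gal(K_w/k_v) \xrightarrow{\sim} \Gamma_w \hookrightarrow \Gamma$ is compatible with the chosen embedding $\bar{k} \hookrightarrow \bar{k_v}$. Concretely, $K$ is the splitting field of $X$, so $X(\bar{k}) = X(K)$ as $\Gamma$-sets, and the embedding identifies $X(\bar{k_v}) = X(\bar{k})$ as sets with compatible Galois actions, the action of $\Gal(\bar{k_v}/k_v)$ factoring through its image $\Gamma_w$ in $\Gamma$. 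Hence $\Gal(\bar{k_v}/k_v)$ has a fixed point on $X(\bar{k_v})$ if and only if $\Gamma_w$ has a fixed point on $X(K)$.

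Finally I would note that the choice of place $w$ above $v$ and of the embedding is harmless: any two places above $v$ are conjugate under $\Gamma$, so their decomposition groups are conjugate in $\Gamma$, and conjugate subgroups have a fixed point on $X(K)$ simultaneously. This gives the ``for any place $w$ above $v$'' formulation in the statement. Combining the three equivalences — $k_v$-point $\Leftrightarrow$ fixed point for $\Gal(\bar{k_v}/k_v)$ on $X(\bar{k_v})$ $\Leftrightarrow$ fixed point for $\Gamma_w$ on $X(K)$ — completes the proof.

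The main obstacle is purely expository rather than mathematical: making precise the compatibility of embeddings so that the identification $X(\bar{k_v}) \cong X(K)$ genuinely intertwines the $\Gal(\bar{k_v}/k_v)$-action with the $\Gamma_w$-action. This is the usual bookkeeping in relating local and global Galois groups via decomposition groups; once set up carefully, everything else is a direct application of the dictionary recorded above (the property that a finite \'etale scheme has a rational point precisely when the associated Galois set has a fixed point).
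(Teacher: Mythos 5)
Your proposal is correct and is essentially the paper's own argument, just with the embedding bookkeeping spelled out: the paper simply notes that the splitting field of $X_{k_v}$ is $K_w$, identifies $\Gal(K_w/k_v)$ with the decomposition group of $w$, and applies the dictionary's fixed-point criterion. The extra care you take over the compatibility of embeddings and the conjugacy of decomposition groups for different choices of $w$ is exactly what the paper leaves implicit.
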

\begin{proof}
	The splitting field of $X_{k_v}$ is $K_w$,
	and the Galois group $\Gal(K_w/k_v)$ may be identified
	with the decomposition group of $w$. As
	$X$ admits a $k_v$-point if and only if this
	group acts with a fixed point, the result is proved.
\end{proof}

Using this, we give a criterion for solubility at almost all places.

\begin{lemma}\label{lem:Chebotarev}
	Let $X$ be a finite \'{e}tale scheme over $k$. Let $K/k$
	denote the splitting field, with Galois group $\Gamma$. Then $X$ is locally soluble
	at all but finitely many places of $k$, but not soluble over $k$,
	if and only if on the associated
	$\Gamma$-set $X(K)$, each conjugacy class of $\Gamma$ acts with a fixed point,
	but the group $\Gamma$ acts without a fixed point.
	
	In which case, for each place $v$ of $k$ which is either archimedean
    or unramified in~$K$, the scheme $X$ admits a $k_v$-point.
\end{lemma}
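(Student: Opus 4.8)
The plan is to deduce this from Lemma~\ref{lem:decomposition} together with the Chebotarev density theorem. First I would translate the statement into group-theoretic language via the dictionary: by Lemma~\ref{lem:decomposition}, $X$ admits a $k_v$-point if and only if the decomposition group of some (equivalently any, by conjugacy) place $w$ above $v$ acts with a fixed point on $X(K)$. So ``$X$ is locally soluble at all but finitely many places'' should correspond to ``every cyclic subgroup of $\Gamma$ arising as a decomposition group acts with a fixed point'', and ``$X$ is not soluble over $k$'' corresponds, again by the dictionary, to ``$\Gamma$ acts without a fixed point''. The subtlety is to match up decomposition groups with conjugacy classes, and to show the ``finitely many exceptions'' can in fact be taken to be exactly the ramified non-archimedean places.

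Next I would argue each direction. For the forward direction, suppose $X$ is locally soluble away from a finite set $S$ of places but has no $k$-point; the latter gives immediately that $\Gamma$ has no fixed point on $X(K)$. Now fix any $\sigma \in \Gamma$. By Chebotarev, there are infinitely many primes $\mathfrak{p}$ of $k$, unramified in $K$, whose Frobenius conjugacy class in $\Gamma$ equals that of $\sigma$; pick one outside $S$. For such $\mathfrak{p}$ the decomposition group at a place $w \mid \mathfrak{p}$ is the cyclic group generated by a conjugate of $\sigma$, and by Lemma~\ref{lem:decomposition} and local solubility at $\mathfrak{p}$ this group has a fixed point on $X(K)$. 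Since having a fixed point is invariant under conjugation of the acting subgroup, $\langle \sigma \rangle$ itself has a fixed point — which is exactly the statement that the conjugacy class of $\sigma$ ``acts with a fixed point''. Conversely, if every conjugacy class acts with a fixed point but $\Gamma$ does not, then $X$ has no $k$-point, and for any place $v$ which is archimedean or unramified in $K$, the decomposition group at a place above $v$ is either trivial (if $v$ splits), generated by complex conjugation (archimedean), or generated by a Frobenius element — in every case a cyclic group generated by a single element $\sigma$, hence with a fixed point by hypothesis; Lemma~\ref{lem:decomposition} then gives a $k_v$-point. So $X$ is locally soluble at all but (at most) the finitely many ramified non-archimedean places. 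This simultaneously establishes the ``in which case'' addendum.

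The main obstacle I anticipate is purely bookkeeping rather than conceptual: being careful that ``the decomposition group acts with a fixed point'' is independent of the choice of place $w$ above $v$ (this is because the decomposition groups at different $w \mid v$ are conjugate in $\Gamma$, and conjugation carries fixed points to fixed points), and correctly identifying, for unramified or archimedean $v$, the decomposition group as procyclic — topologically generated by a Frobenius in the unramified finite case, of order dividing $2$ in the archimedean case, trivial for complex places. One should also note that Chebotarev already guarantees that for \emph{every} element $\sigma \in \Gamma$ the cyclic group $\langle \sigma \rangle$ occurs (up to conjugacy) as a decomposition group of infinitely many unramified primes, which is what lets us promote ``soluble at almost all places'' to a condition on all conjugacy classes. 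With these small points handled, the proof is a direct combination of the previous lemma and Chebotarev, and I would keep it to a few lines.
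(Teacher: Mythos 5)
Your proposal is correct and follows essentially the same route as the paper: both directions are obtained by combining Lemma~\ref{lem:decomposition} with the Chebotarev density theorem, using that decomposition groups at unramified (and archimedean) places are cyclic and that every conjugacy class arises as a Frobenius class at infinitely many unramified primes. The paper states the forward direction contrapositively and more tersely, but the content is identical, including the observation that the finitely many exceptional places can be taken among the ramified non-archimedean ones.
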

\begin{proof}
	The Chebotarev density theorem \cite[Thm.~13.4]{Neu99}
	implies that every cyclic subgroup of $\Gamma$
	occurs as a decomposition group for infinitely many
	places of $k$. Hence if there is an element of $\Gamma$
	which acts without a fixed point, then by Lemma \ref{lem:decomposition} there are infinitely many
	places $v$ of $k$ for which $X$ has no $k_v$-point.
	If however every conjugacy class of $\Gamma$ acts with a fixed point,
	then an application of Lemma \ref{lem:decomposition} shows that 
	$X$ admits a $k_v$-point for every place $v$ of $k$
	which is either archimedean or unramified in $K/k$. This completes the proof.
\end{proof}
We now present some simple cases in which the Hasse principle holds.

\begin{lemma} \label{lem:HP}
	Let $X$ be a finite \'{e}tale scheme over $k$ and let $\Gamma$ denote
	the Galois group of the splitting field of $X$. Assume that $X$
	is irreducible or that $\Gamma$ is cyclic. If $X(k) = \emptyset$, then
	$X(k_v) = \emptyset$ for infinitely many places $v$ of $k$.
	In particular $X$ satisfies the Hasse principle.
\end{lemma}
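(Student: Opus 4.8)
The plan is to use the $\Gamma$-set dictionary together with Lemma~\ref{lem:Chebotarev}, and then handle the two hypotheses separately. By Lemma~\ref{lem:Chebotarev}, it suffices to show that if $X(k) = \emptyset$ — equivalently, $\Gamma$ acts without a fixed point on the associated $\Gamma$-set $X(K)$ — then some conjugacy class of $\Gamma$, i.e.\ some element $g \in \Gamma$, also acts without a fixed point on $X(K)$. Indeed, if such a $g$ exists, then by the Chebotarev argument in Lemma~\ref{lem:decomposition} and Lemma~\ref{lem:Chebotarev} there are infinitely many places $v$ at which $X$ has no $k_v$-point, and in particular $X$ fails the Hasse principle (the existence of a $k_v$-point everywhere being violated). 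So the whole proof reduces to this purely group-theoretic statement about $\Gamma$-sets.

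For the case where $\Gamma$ is cyclic, say $\Gamma = \langle g \rangle$, this is immediate: a fixed point of $g$ on $X(K)$ would be a fixed point of all of $\Gamma$, contradicting $X(k) = \emptyset$. Hence $g$ itself acts without a fixed point, and we are done. For the case where $X$ is irreducible, the $\Gamma$-set $X(K)$ is transitive, so it may be identified with the coset space $\Gamma/H$ for $H$ the stabiliser of a point, with $H$ a proper subgroup (properness because $\Gamma$ acts without a global fixed point, using faithfulness — or simply because $\deg X > 1$). The claim then becomes: a finite group $\Gamma$ cannot be the union of the conjugates of a proper subgroup $H$. This is the classical Jordan lemma: counting, $|\bigcup_{x \in \Gamma} xHx^{-1}| \leq [\Gamma:H](|H|-1) + 1 < |\Gamma|$, since the conjugates all share the identity and there are at most $[\Gamma:H]$ of them. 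An element $g$ outside this union then fixes no point of $\Gamma/H$, which is exactly what we need.

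The main (and only mild) obstacle is making sure the reduction to ``some element acts without a fixed point'' correctly implies failure of the Hasse principle as stated: one must note that an element acting freely enough to have no fixed point gives, via Chebotarev, infinitely many places of bad reduction, so in particular $X$ is \emph{not} locally soluble everywhere — thus the Hasse principle is vacuously satisfied since the hypothesis of local solubility everywhere already fails. This is precisely the logical structure already set up in Lemma~\ref{lem:Chebotarev}, so it is essentially a matter of quoting it. The Jordan counting argument and the cyclic case are both elementary, so no serious difficulty arises; the content is entirely in recognising that irreducibility forces transitivity and invoking Jordan's lemma on covering a group by conjugates of a proper subgroup.
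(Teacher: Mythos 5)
Your proof is correct and follows essentially the same route as the paper: the cyclic case via the observation that a fixed point of a generator is a fixed point of all of $\Gamma$, and the irreducible case via Lemma~\ref{lem:Chebotarev} together with Jordan's theorem that a finite group is not the union of the conjugates of a proper subgroup (which the paper simply cites from Serre, whereas you supply the standard counting argument).
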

\begin{proof}
	When $X$ is irreducible this is well-known, and follows,
	for example, from Lemma \ref{lem:Chebotarev} and a theorem
	of Jordan \cite[Thm.~4]{Ser03}. When $\Gamma$ is cyclic,
	the result is an easy application of Lemma \ref{lem:Chebotarev}
	(if a generator of $\Gamma$ acts with a fixed point, then so does $\Gamma$).
\end{proof}

\begin{remark}
	For non-cyclic groups however, counter-examples to the Hasse principle always occur.
	Namely, let $k$ be a number field and let $\Gamma$ be a non-cyclic group which is realisable
	as a Galois group over $k$. Then it was shown in \cite{Son08}
	and \cite{Son09} that there exists some finite \'{e}tale scheme over $k$ whose splitting
	field has Galois group $\Gamma$ and which is a counter-example
	to the Hasse principle.
	
	In the solvable case, these results will suffice for
	our purposes. In the non-solvable case however,
	Sonn's construction does not work for us since the schemes produced have very large degree 
	in general. We therefore take a different approach, using work of Kedlaya \cite{Ked12}.
\end{remark}

We next give a condition for the failure of the Hasse
principle in terms of the associated \'{e}tale algebra.

\begin{lemma} \label{lem:Hasse}
	Let $K_1,\ldots,K_r$ be number fields which
	contain, yet are strictly larger than,
	a fixed number field $k$. Let
	$$X = \Spec K_1 \sqcup \cdots \sqcup \Spec K_r,$$
	considered as a scheme over $k$.
	Then $X$ fails the Hasse principle if and only if 
	for each prime ideal $\mathfrak{p}$ of $k$, one of the fields $K_i$ 
	admits an unramified prime ideal above $\mathfrak{p}$
	of inertia degree $1$.
\end{lemma}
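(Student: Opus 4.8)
The plan is to combine the two criteria already established — Lemma \ref{lem:decomposition} for solubility at a single place, and Lemma \ref{lem:Chebotarev} for solubility at almost all places — and to reinterpret the $\Gamma$-set of $X$ concretely in terms of the fields $K_i$. First I would observe that since each $K_i$ strictly contains $k$, the scheme $\Spec K_i$ has no $k$-point, and hence $X$ has no $k$-point; so ``$X$ fails the Hasse principle'' is equivalent to ``$X$ is locally soluble at every place of $k$''. The task therefore reduces to translating the condition ``$X(k_v) \neq \emptyset$ for all $v$'' into the stated arithmetic condition on the $K_i$.

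**Next I would handle the finite places.** Fix a prime $\mathfrak{p}$ of $k$ and a place $v$ of $k$ above it. The $\Gamma$-set $X(K)$ decomposes as a disjoint union of the transitive $\Gamma$-sets $\Gamma/\Gamma_i$, where $\Gamma_i = \Gal(K/K_i)$. By Lemma \ref{lem:decomposition}, $X(k_v) \neq \emptyset$ iff some decomposition group $D_w$ (for $w \mid v$ in $K$) fixes a point of one of these coset spaces, i.e.\ iff $D_w$ is contained in some conjugate of some $\Gamma_i$. Unwinding this: $D_w$ lies in a conjugate of $\Gamma_i$ precisely when $K_i$ has a place above $\mathfrak{p}$ with trivial decomposition group over $k_{\mathfrak{p}}$, which by the classical description of decomposition groups in terms of inertia degree and ramification index means: $K_i$ has a prime above $\mathfrak{p}$ that is unramified with inertia (residue) degree $1$. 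This is exactly the stated condition at $\mathfrak{p}$. So for finite $\mathfrak{p}$, local solubility at $v$ $\iff$ some $K_i$ has an unramified degree-one prime above $\mathfrak{p}$.

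**Then I must dispose of the archimedean places**, where the stated condition makes no mention of them. Here I would invoke the second assertion of Lemma \ref{lem:Chebotarev}: if every conjugacy class of $\Gamma$ acts with a fixed point on $X(K)$ — which is automatic once $X$ is locally soluble at all finite (unramified) places, by the forward direction of that lemma — then $X$ is automatically soluble at all archimedean places and all unramified finite places. More directly: the condition ``some $K_i$ has an unramified degree-one prime above $\mathfrak{p}$ for every $\mathfrak{p}$'' forces, for each cyclic subgroup $C \le \Gamma$ realised as a decomposition group (Chebotarev), that $C$ fixes a point of $X(K)$; running over all $\mathfrak{p}$ this gives that every conjugacy class of $\Gamma$ (equivalently every cyclic subgroup, hence in particular those generated by complex conjugations) acts with a fixed point, whence Lemma \ref{lem:Chebotarev} yields local solubility everywhere, archimedean places included. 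Conversely, if $X$ is everywhere locally soluble, then in particular it is soluble at every finite $\mathfrak{p}$, giving the field-theoretic condition by the finite-place analysis. This closes the equivalence.

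**The main obstacle** I anticipate is purely bookkeeping: carefully matching ``$D_w \subseteq g\Gamma_i g^{-1}$ for some $g$'' with the statement ``$K_i$ admits an unramified prime above $\mathfrak{p}$ of inertia degree $1$''. One must recall that the primes of $K_i$ above $\mathfrak{p}$ correspond to the $\Gamma_i$-orbits on $\Gamma/D_w$ (or equivalently the double cosets $\Gamma_i \backslash \Gamma / D_w$), that the decomposition group over $k_{\mathfrak{p}}$ of such a prime is the corresponding intersection $\Gamma_i \cap g D_w g^{-1}$ up to conjugacy, and that ``unramified of inertia degree $1$'' is exactly the condition that this intersection be trivial — i.e.\ that $g^{-1}\Gamma_i g \supseteq D_w$ for a suitable $g$. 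Once this correspondence is spelled out, the rest is a direct application of the two preceding lemmas, and no genuinely new idea is needed. I would also remark that the hypothesis $K_i \supsetneq k$ is used only to guarantee $X(k) = \emptyset$, so that ``fails the Hasse principle'' is synonymous with ``everywhere locally soluble''; without it the statement would need the extra (vacuously handled) proviso that $X$ has no $k$-point.
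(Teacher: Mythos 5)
Your proposal is correct and follows essentially the same route as the paper: both reduce to showing that local solubility at a finite place $\mathfrak{p}$ is equivalent to some $K_i$ admitting an unramified prime of inertia degree one above $\mathfrak{p}$ (the paper phrases this via the fact that an unramified local extension of inertia degree one is trivial, you via decomposition groups and Lemma~\ref{lem:decomposition} --- the same fact), and both then dispose of the archimedean places by the Chebotarev argument of Lemma~\ref{lem:Chebotarev}. The double-coset bookkeeping you supply is accurate but is precisely what the paper's proof leaves implicit.
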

\begin{proof}
	By construction $X$	admits no $k$-point. The key relevant fact for the proof is that 
	if $F/E$ is an unramified extension
	of non-archimedean local fields of inertia degree $1$, then $F=E$
	(see \cite[\S III.5]{Ser80}). 
	This easily implies that $X$ is soluble at each non-archimedean
	place of $k$ if and only if for each prime ideal $\mathfrak{p}$ of $k$,
	one of the fields $K_i$ admits an unramified prime ideal above $\mathfrak{p}$
	of inertia degree $1$. It follows from Lemma \ref{lem:Chebotarev} however
	that $X$ is soluble at each non-archimedean
	place of $k$ if and only if it is soluble
	at \emph{all} places of $k$, and the result follows.
\end{proof}

\begin{remark}
	M.~Stoll \cite[Prop.~5.12]{Sto07} has shown that the finite abelian
	descent obstruction is the only one to the Hasse principle
	for finite \'{e}tale schemes. It is also quite simple to see that the Brauer-Manin
	obstruction is the only one to the Hasse principle in this case.
	Indeed, let $X$ be a finite \'{e}tale scheme over~$k$
	which fails the Hasse principle and let $(x_v) \in X(\mathbb{A}_k)$
	be an adelic point.	Choose an irreducible component $X_0$ of $X$ for which the
	set $$S= \{ v \in \Val(k) : x_v \in X_0(k_v) \},$$
	contains a non-complex place $v_0$. Note that
	the complement of $S$ in $\Val(k)$ is infinite, by Lemma \ref{lem:HP}.
	Write $X_0 = \Spec K$, so that $x_{v_0}$ corresponds to some unramified
	place $w_0 \mid v_0$ of $K$ of inertia degree $1$. Then by class field theory,
	there exists $b_0 \in \Br X_0=\Br K$ such that $\inv_{w_0} b_0 \neq 0$,
	but such that $\inv_w b_0 = 0$ for all other $w \in \Val(K)$ above
	the places of $S$. Hence the adelic point $(x_v)$ is not orthogonal
	to the Brauer group element
	$$b = (b_0,0) \quad \in \Br X_0 \oplus \Br (X \setminus X_0) = \Br X.$$
	Thus $X(\mathbb{A}_k)^{\Br} = \emptyset$, as required.
\end{remark}

\subsubsection{Existence of counter-examples} \label{sec:existence}
We finish by gathering some results on the existence of finite \'{e}tale
schemes which fail the Hasse principle.
The first result says that in the solvable
case, we may choose our field extensions such that the solubility conditions at
the ramified primes in Lemma \ref{lem:Chebotarev} are always satisfied.

\begin{lemma} \label{lem:solvable}
	Let $k$ be a number field and let $\Gamma$
	be a solvable group. Then there exists a Galois
	extension $K/k$ with Galois group $\Gamma$
	with the following property:
	
	Let $X$ be a finite \'{e}tale
	$k$-scheme with splitting field $K$, such that 
	each conjugacy class of $\Gamma$ acts with a fixed 
	point on the associated $\Gamma$-set $X(K)$, 
	but $\Gamma$ acts without a fixed point.
	Then $X$ is a counter-example to the Hasse principle.
\end{lemma}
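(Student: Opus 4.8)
The plan is to reduce the statement to a problem about local fields by invoking Lemma~\ref{lem:Chebotarev}, and then to kill the ramified-place conditions by a judicious choice of the extension $K/k$. By Lemma~\ref{lem:Chebotarev}, the scheme $X$ is locally soluble at every archimedean place and at every place unramified in $K$, \emph{automatically}, once we know that each conjugacy class of $\Gamma$ acts with a fixed point on $X(K)$; and $X$ has no $k$-point since $\Gamma$ acts without a fixed point. So the only thing standing between us and ``$X$ is a counter-example to the Hasse principle'' is local solubility at the finitely many non-archimedean places $v$ of $k$ that ramify in $K/k$. Hence it suffices to construct $K/k$ with Galois group $\Gamma$ such that, at every prime $\mathfrak{p}$ of $k$ ramifying in $K$, the corresponding decomposition group $D_{\mathfrak{p}} \subseteq \Gamma$ is \emph{cyclic}: for then a generator $\sigma$ of $D_{\mathfrak{p}}$ lies in some conjugacy class of $\Gamma$, which by hypothesis fixes a point of $X(K)$, so $\sigma$ fixes that point, hence $D_{\mathfrak{p}} = \langle \sigma \rangle$ does too, and Lemma~\ref{lem:decomposition} gives a $k_v$-point.

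So the real content is: \emph{for $\Gamma$ solvable, there is a $\Gamma$-extension $K/k$ in which every ramified prime has cyclic decomposition group.} This is precisely the type of statement established by Sonn~\cite{Son08} (whose results are cited in the Remark after Lemma~\ref{lem:HP} and again in the introduction as the tool used in the solvable case). Concretely, one builds $K/k$ as a tower $k = K_0 \subset K_1 \subset \cdots \subset K_n = K$ realizing a composition series of $\Gamma$ with cyclic (indeed prime-order) quotients, arranging at each stage that the primes which ramify in $K_{i}/K_{i-1}$ are distinct from — and, crucially, split completely in — the previously used ramified primes, and vice versa. Then a prime $\mathfrak{p}$ of $k$ that ramifies in $K$ ramifies in exactly one step of the tower, say in $K_i/K_{i-1}$, and splits completely in all the others; tracing through how decomposition groups behave in towers, the decomposition group of $\mathfrak{p}$ in $\Gamma$ is forced to be cyclic (it surjects onto a cyclic quotient and has trivial intersection with the relevant sub-pieces). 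This ``arithmetic disjointness'' of the ramification sets at the successive layers is what Sonn's construction delivers; I would cite \cite{Son08} for the existence of such a $K$ rather than reproduce the class-field-theory argument.

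The main obstacle is therefore not the reduction — that is a two-line deduction from Lemmas~\ref{lem:decomposition} and~\ref{lem:Chebotarev} — but rather marshalling the correct statement from \cite{Son08}: one must check that Sonn constructs $\Gamma$-extensions with the precise property that \emph{all} decomposition groups at ramified primes are cyclic (not merely that such extensions exist for some specially chosen $\Gamma$-set $X$), so that the extension $K$ works uniformly for \emph{every} $X$ with splitting field $K$ satisfying the conjugacy-class hypothesis. Assuming this, the proof is complete: pick $K/k$ as above; given any such $X$, it is locally soluble everywhere by the decomposition-group argument at ramified primes and by Lemma~\ref{lem:Chebotarev} elsewhere, yet $X(k) = \emptyset$, so $X$ fails the Hasse principle.
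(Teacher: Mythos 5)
Your proposal is correct and follows essentially the same route as the paper: reduce via Lemmas~\ref{lem:decomposition} and~\ref{lem:Chebotarev} to the ramified places, and then invoke the construction in the proof of \cite[Thm.~2]{Son08} of a $\Gamma$-extension $K/k$ all of whose decomposition groups are cyclic, so that the conjugacy-class hypothesis forces a fixed point for each decomposition group. The paper's proof is exactly this two-line deduction plus the citation of Sonn, so no further comparison is needed.
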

\begin{proof}
	It is shown in the proof of \cite[Thm.~2]{Son08} 
	that there exists a finite Galois extension $K/k$ with
	Galois group $\Gamma$, all of whose decomposition groups are cyclic. 
	The result then follows from Lemma \ref{lem:decomposition}
	and Lemma \ref{lem:Chebotarev}.
\end{proof}

We also construct explicit counter-examples when $\Gamma = \ZZ/2\ZZ \times \ZZ/2\ZZ$.

\begin{lemma} \label{lem:biquadratic}
	Let $k$ be a number field. Then there exist $a,b \in k^*$
	such that the $k$-scheme
	$$(x^2 - a)(x^2 - b)(x^2 - ab) = 0,$$
	fails the Hasse principle.
\end{lemma}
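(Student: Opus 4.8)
The plan is to realise the scheme $(x^2-a)(x^2-b)(x^2-ab)=0$ as the finite \'etale scheme attached to a biquadratic field with suitably controlled ramification, and then to invoke Lemma~\ref{lem:solvable}. For $a,b\in k^*$ with $a$, $b$ and $ab$ all non-squares in $k$, this scheme is
$$X=\Spec k(\sqrt a)\sqcup\Spec k(\sqrt b)\sqcup\Spec k(\sqrt{ab}),$$
whose splitting field is the compositum $K=k(\sqrt a,\sqrt b)$, a Galois extension of $k$ with group $\Gamma=\ZZ/2\ZZ\times\ZZ/2\ZZ$. Write $\Gamma=\{1,\sigma,\tau,\sigma\tau\}$, where $\sigma$ fixes $\sqrt a$ and negates $\sqrt b$ while $\tau$ negates $\sqrt a$ and fixes $\sqrt b$, so that $\sigma\tau$ negates both $\sqrt a$ and $\sqrt b$ but fixes $\sqrt{ab}$. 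The associated $\Gamma$-set $X(K)$ is then the disjoint union of the three pairs $\{\pm\sqrt a\}$, $\{\pm\sqrt b\}$, $\{\pm\sqrt{ab}\}$; the action is faithful because $K$ is the splitting field, and $\Gamma$ has no fixed point on $X(K)$ because $X(k)=\emptyset$ (none of $a$, $b$, $ab$ is a square in $k$). On the other hand, each of the nontrivial elements $\sigma$, $\tau$, $\sigma\tau$ fixes one of the three pairs pointwise. Since $\Gamma$ is abelian, this says exactly that every conjugacy class of $\Gamma$ acts on $X(K)$ with a fixed point, whereas $\Gamma$ itself does not.

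With this in hand, the remaining content is to choose $a$ and $b$ so that the finitely many ramified places cause no trouble; the archimedean places are automatic, since $a$, $b$, $ab$ cannot all be negative under a fixed real embedding. This is precisely what Lemma~\ref{lem:solvable} provides: the group $\ZZ/2\ZZ\times\ZZ/2\ZZ$ is solvable, so there is a Galois extension $K/k$ with this group such that every finite \'etale $k$-scheme with splitting field $K$ satisfying the above fixed-point conditions is a counter-example to the Hasse principle. By Kummer theory, such a $K$ has the form $k(\sqrt a,\sqrt b)$ for some $a,b\in k^*$ with $a$, $b$ and $ab$ all non-squares, and then the scheme $X$ displayed above has splitting field $K$ and, by the previous paragraph, meets the hypotheses of Lemma~\ref{lem:solvable}. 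Hence for these $a,b$ the scheme $(x^2-a)(x^2-b)(x^2-ab)=0$ fails the Hasse principle, as required.

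I expect the only genuine obstacle to be local solubility at the ramified primes, which is exactly the point that Lemma~\ref{lem:solvable} (through Sonn's construction of extensions all of whose decomposition groups are cyclic) settles; everything else is bookkeeping with the $\Gamma$-action. If one prefers an explicit example rather than an appeal to Lemma~\ref{lem:solvable}, one can instead apply Lemma~\ref{lem:Hasse} directly: over $\QQ$ the choice $a=2$, $b=17$, $ab=34$ works, since for every prime $p$ one of $\QQ(\sqrt2)$, $\QQ(\sqrt{17})$, $\QQ(\sqrt{34})$ has a prime above $p$ which is unramified of inertia degree~$1$ --- for odd $p\neq 17$ because the three Legendre symbols of $2$, $17$, $34$ multiply to $1$, so one of them equals $+1$; for $p=2$ because $17\equiv1\pmod 8$; and for $p=17$ because $2$ is a square modulo $17$ --- while $X(\QQ)=\emptyset$ as $2$, $17$, $34$ are not squares.
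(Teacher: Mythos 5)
Your argument is correct, but it reaches the conclusion by a different route than the paper. The paper's proof is hands-on: it picks rational primes $p\equiv 1\pmod 8$ and $q\equiv 1\pmod p$ with $\QQ(\sqrt p),\QQ(\sqrt q),\QQ(\sqrt{pq})\not\subset k$, verifies directly (essentially by the quadratic-reciprocity computation you sketch for $2,17,34$) that $(x^2-p)(x^2-q)(x^2-pq)=0$ fails the Hasse principle over $\QQ$, and then observes that it remains a counter-example after base change to $k$ because none of the three quadratic fields lies in $k$. You instead take the abstract path: identify the scheme with the \'{e}tale scheme attached to the three quadratic subfields of a $V_4$-extension, check the conjugacy-class fixed-point condition of Proposition~\ref{prop:criterion}/Lemma~\ref{lem:Chebotarev}, and then invoke Lemma~\ref{lem:solvable} (i.e.\ Sonn's construction of a $V_4$-extension of $k$ with cyclic decomposition groups) together with Kummer theory to produce $a,b$. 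Both are complete; your version is more uniform and works directly over $k$ without a base-change step, while the paper's yields explicit $a,b$ coming from $\QQ$ and avoids quoting Lemma~\ref{lem:solvable}. One small caveat on your closing aside: the explicit choice $a=2$, $b=17$ is only guaranteed to work for $k=\QQ$; for general $k$ one must additionally arrange that none of the three quadratic fields is contained in $k$ (otherwise the scheme acquires a $k$-point), which is exactly the extra condition the paper imposes on $p$ and $q$.
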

\begin{proof}
	Choose rational primes $p$ and $q$ such that $p \equiv 1 \pmod 8,
	q \equiv 1 \pmod p$ and 
	$$\QQ(\sqrt{p}), \QQ(\sqrt{q}), \QQ(\sqrt{pq}) \not \subset k.$$
	This is possible since there are infinitely many such primes
	by Dirichlet's theorem, and since $k$ contains only finitely many subfields
	by Galois theory. A simple computation shows that the scheme
	$$(x^2 - p)(x^2 - q)(x^2 - pq) = 0,$$
	over $\QQ$ is a counter-example to the Hasse principle, and that it
	also remains a counter-example to the Hasse principle upon base-change to $k$.
\end{proof}

\subsection{Del Pezzo surfaces}
In this section we gather various facts we shall require about 
the geometry of del Pezzo surfaces. For background on del Pezzo
surfaces, we use \cite{Man86}, \cite[Ch.~8]{Dol12} and \cite[III.3]{Kol96}.
\subsubsection{Classification} \label{sec:classification}
\begin{definition}
	Let $k$ be a field. A del Pezzo surface over $k$ is a smooth projective surface $S$ 
	over $k$ with ample anticanonical divisor $(-K_S)$. The degree $d$ of $S$ is defined to be
	the self-intersection number $d=(-K_S)^2$.
\end{definition}

The degree $d$ of any del Pezzo surface $S$ satisfies $1 \leq d \leq 9$.
If $k$ is algebraically closed,
any del Pezzo surface is isomorphic to either $\PP^2$,
$\PP^1 \times \PP^1$, or the blow-up of $\PP^2$ in $9-d$ points
in general position $(d \leq 8$). Here by general position, we mean that 
no three lie on a line, no six
lie on a conic, and no eight lie on a cubic which is singular at one
of the points. In which case,
the exceptional curves on $S$ consist of the exceptional
curves of the  blow-ups of the points, together with the 
strict transforms of the following curves (see \cite[Thm.~26.2]{Man86}).
\begin{enumerate}
	\item Lines through two of the points.
	\item Conics through five of the points.
	\item Cubic curves through seven of the points, with a double point at exactly one of them.
	\item Quartic curves through eight of the points, with a double point at exactly three of them.
	\item Quintic curves through eight of the points, with a double point at exactly six of them.
	\item Sextic curves through eight of the points, with a triple point at exactly one of them
	and double points at the rest.
\end{enumerate}

For general $k$, one may construct
del Pezzo surfaces by blowing-up collections of \emph{closed} points
of $\PP^2$ in general position. Here we say
that a collection of closed points is in general position if the corresponding
points over $\bar k$ are in general position. Note that, in general, not every del
Pezzo surface arises in this way, as surfaces constructed in this manner will
always be non-minimal.

\subsubsection{The lines} \label{sec:lines}

Much of the arithmetic and geometry of del Pezzo surfaces is governed by the lines on 
the surface. We now let $k$ be a perfect field.

\begin{definition} \label{def:line}
	Let $S$ be a del Pezzo surface of degree $d$ over $k$ and let $L$ 
	be a curve on $S$ defined over $k$. Then we call $L$ a \emph{line} if:
	\begin{enumerate}
		\item $L\cdot(-K_S) = 3$, if $d=9$.
		\item $L\cdot(-K_S) = 2$, if $S_{\bar k} \cong \PP^1 \times \PP^1$.
		\item $L \cdot(-K_S) = 1$ and $p_a(L)=0$, otherwise.
	\end{enumerate}
\end{definition}

When $d\leq 7$ such curves are exactly the $(-1)$-curves on $S$. In the other cases
we have chosen a definition which reflects the geometry over the algebraic
closure. Indeed, a curve $L$ in $\PP^2$ with $L\cdot(-K_{\PP^2}) = 3$ is exactly a line
in the usual sense (since $-K_{\PP^2} = 3L$), and moreover a curve $L$ in $\PP^1 \times \PP^1$ with $L\cdot(-K_{\PP^1 \times \PP^1}) = 2$ 
is also exactly a line in the usual sense, once one embeds $\PP^1 \times \PP^1$
into $\PP^3$ as a quadric surface via the Segre embedding. Note that del
Pezzo surfaces of degree~$1$ contain curves $C$ such that $C \cdot (-K_S)=1$
but such that $C$ is not a line; any curve in the anticanonical linear
system is such a curve.

For a del Pezzo surface $S$, we shall denote by $\mathcal{L}(S)$ the Hilbert scheme of lines
on $S$. This is the scheme which parametrises the lines inside $S$,
whose existence is a special case of a general result of Grothendieck
\cite[Expos\'{e} 221, Thm.~3.1]{FGA}.

\begin{proposition} \label{prop:Hilbert}
	Let $S$ be a del Pezzo surface of degree $d$ over $k$.
	Then $\mathcal{L}(S)$ is a reduced projective scheme over $k$, which satisfies
	the following:
	\begin{enumerate}
		\item $\mathcal{L}(S)_{\bar k} \cong \PP^2$, if $d=9$. \label{item:P2}
		\item $\mathcal{L}(S)_{\bar k} = \PP^1 \sqcup \PP^1$,
			if $S_{\bar k} \cong \PP^1 \times \PP^1$. \label{item:P1_times_P1}
		\item $\mathcal{L}(S)$ is a finite \'{e}tale scheme of degree	\label{item:general}
			  \begin{table}[htp]
			  \centering
				\begin{tabular}{c|c|c|c|c|c|c|c|c}
					$d$&$8$&$7$&$6$&$5$&$4$&$3$&$2$&$1$ \\
					\hline
					$\deg \mathcal{L}(S)$&$1$&$3$&$6$&$10$&$16$&$27$&$56$&$240$ \\
				\end{tabular}
			  \label{tab:overview} ,
			  \end{table}\\
			otherwise.
	\end{enumerate}
\end{proposition}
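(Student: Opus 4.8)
The plan is to treat the three cases according to the structure of $S_{\bar k}$, since each reduces to a concrete geometric description of what a line is.

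First I would dispose of cases \eqref{item:P2} and \eqref{item:P1_times_P1}. For $d=9$ one has $S_{\bar k}\cong\PP^2$ with $-K_S=3H$, so the condition $L\cdot(-K_S)=3$ forces $L\cdot H=1$, i.e.\ $L$ is a hyperplane; the Hilbert scheme of hyperplanes in $\PP^2$ is the dual $\PP^2$, and reducedness is clear. For $S_{\bar k}\cong\PP^1\times\PP^1$ one has $\mathrm{Pic}=\ZZ e_1\oplus\ZZ e_2$, $-K_S=2e_1+2e_2$, and $L\cdot(-K_S)=2$ together with $L$ being a curve forces the class of $L$ to be either $e_1$ or $e_2$; each family is a $\PP^1$, giving $\PP^1\sqcup\PP^1$, again reduced. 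In both cases one should remark that these geometric descriptions are Galois-equivariant, so descend to $k$.

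For case \eqref{item:general} the main point is the identification, when $d\le 7$, of lines with $(-1)$-curves, which is already stated in the text after Definition~\ref{def:line}; for $d=8$ with $S_{\bar k}\not\cong\PP^1\times\PP^1$, i.e.\ $S_{\bar k}\cong\Bl_1\PP^2$, the unique $(-1)$-curve is the exceptional divisor, so $\deg\mathcal{L}(S)=1$. In general, over $\bar k$ the $(-1)$-curves on a del Pezzo surface of degree $d$ are rigid (their normal bundle is $\OO(-1)$, which has no sections and no $H^1$), so each appears as a reduced isolated point of the Hilbert scheme; hence $\mathcal{L}(S)_{\bar k}$ is a finite reduced scheme whose cardinality is the number of $(-1)$-curves. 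That number is classical: it can be read off from the list in \S\ref{sec:classification} of the exceptional curves on $\Bl_{9-d}\PP^2$, or equivalently counted as the number of roots-of-a-certain-type (elements $\ell$ of the Picard lattice with $\ell^2=-1$, $\ell\cdot K_S=-1$) in the lattice $E_{9-d}$; this yields $1,3,6,10,16,27,56,240$ for $d=8,\dots,1$, cf.\ \cite[Thm.~26.2]{Man86}. Reducedness of $\mathcal{L}(S)$ over $k$ follows from reducedness over $\bar k$ since $k$ is perfect; finiteness and étaleness over $k$ then follow because $\mathcal{L}(S)$ is proper over $k$ (Grothendieck) and finite reduced over $\bar k$. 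Projectivity in all cases is immediate from the construction of the Hilbert scheme.

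The step I expect to be the genuine content rather than bookkeeping is the rigidity/reducedness claim: that each $(-1)$-curve contributes a reduced point, i.e.\ that the Hilbert scheme is everywhere $0$-dimensional and nowhere non-reduced in case \eqref{item:general}. This is the deformation-theory computation $H^0(C,N_{C/S})=H^1(C,N_{C/S})=0$ for $C\cong\PP^1$ a $(-1)$-curve, which pins down both the dimension and the (non-)obstructedness; everything else — the case analysis for $d=9$ and $\PP^1\times\PP^1$, the enumeration of exceptional curves, and the descent from $\bar k$ to $k$ — is then routine. One can alternatively cite the known smoothness of the relevant Fano scheme for del Pezzo surfaces, but I would include the short $N_{C/S}=\OO_{\PP^1}(-1)$ argument for completeness.
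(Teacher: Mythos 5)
Your proof is correct and follows essentially the same route as the paper: reduce to $\bar k$, identify the lines in the two special cases via duality and the two rulings, and count $(-1)$-curves via the classical enumeration in the remaining cases. The only difference is that where the paper cites \cite[Prop.~3.5]{Sch85} for reducedness, you unpack it with the standard computation $N_{C/S}\cong\OO_{\PP^1}(-1)$, which is exactly the right justification.
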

\begin{proof}
	The fact that $\mathcal{L}(S)$ is reduced is well-known, see for instance
	\cite[Prop.~3.5]{Sch85}. 
	As the formation of the Hilbert scheme
	$\mathcal{L}(S)$ respects base-change, we may assume that $k$ is algebraically closed.
	In which case, \eqref{item:P2} follows from the classical duality of lines in the projective plane,
	and \eqref{item:P1_times_P1} follows from the fact that $\PP^1 \times \PP^1$ admits 
	exactly two distinct rulings by lines. For \eqref{item:general}, 
	the calculation of the degree of $\mathcal{L}(S)$ follows from the
	calculation of the number of lines on del Pezzo surfaces over algebraically
	closed fields \cite[Cor.~25.5.4]{Man86}.
\end{proof}
We shall say that a del Pezzo surface $S$ of degree $d \leq 7$ over $k$ is \emph{split} if
$\mathcal{L}(S)$ is split (as a finite \'{e}tale scheme).
There is a unique smallest 
Galois extension over which $S$ becomes split, called the splitting field
of $S$.

We shall say that a class of del Pezzo surfaces over a number field $k$
satisfies the \emph{Hasse principle for lines}, if for each surface $S$ in the class,
the Hilbert scheme $\mathcal{L}(S)$ satisfies the Hasse principle.

\subsubsection{Graphs of lines}
Let $d \leq 7$ and let $S$ be a del Pezzo surface of degree~$d$ over an algebraically
closed field $k$. Denote by $G(S)$ the \emph{graph of lines} 
of $S$ (see \cite[\S 26.9]{Man86}). This has one vertex for each line on $S$, and given two distinct lines
$L_1$ and $L_2$ on $S$, there is an edge of multiplicity $L_1 \cdot L_2$
between the corresponding vertices. Note that multiple edges occur
if and only if $d\leq 2$.

There exists a graph $G_d$ such that if $S$ is a del Pezzo surface
of degree $d$, then $G(S) \cong G_d$. This graph may be defined via
the root system $\mathbf{E}_{9-d}$ (see \cite[\S 8.2.2]{Dol12}). Namely, there is one vertex for each lattice point $\ell \in 
\mathbf{E}_{9-d}$ which satisfies $(\ell,\ell)=-1$, and given two distinct such points
$\ell_1$~and~$\ell_2$, there is an edge of multiplicity $(\ell_1,\ell_2)$
between the corresponding vertices.

We define the automorphism group $\Aut(G_d)$ of $G_d$
to be those automorphisms of the vertex set
of $G_d$ which preserve edge multiplicities between vertices
(in particular we are not interested in ``automorphisms''
of $G_d$ which fix every vertex, yet permute multiple edges
between adjacent vertices).
The automorphism group of $G_d$ is 
the Weyl group $W(\mathbf{E}_{9-d})$ \cite[Thm.~23.8]{Man86}.
We have the following explicit descriptions.
\begin{itemize}
	\item $G_7$: Path graph on $3$ vertices. $W(\mathbf{E}_{2}) \cong W(\mathbf{A}_2) \cong \ZZ/2\ZZ$.
	\item $G_6$: Cycle graph on $6$ vertices. $W(\mathbf{E}_{3}) \cong W(\mathbf{A}_{1} \times \mathbf{A}_{2}) \cong D_6$.
	\item $G_5$: The Petersen graph. $W(\mathbf{E}_{4}) \cong W(\mathbf{A}_{4}) \cong S_5$.
	\item $G_4$: The Clebsch graph. $W(\mathbf{E}_{5}) \cong W(\mathbf{D}_{5}) \cong (\ZZ/2\ZZ)^4 \rtimes S_5$.
	\item $G_3$: The (dual of the) Schl\"{a}fli graph. $W(\mathbf{E}_{6})$.
\end{itemize}

Let now $S$ be a del Pezzo surface of degree $d$ over a perfect field $k$
with splitting field $K$ and $\Gamma = \Gal(K/k)$. Here we define $G(S)=G(S_{\bar k})$.
The action of $\Gamma$ on the lines of $S_{\bar k}$
induces an action of $\Gamma$ on $G(S)$. The vertices therefore form
a $\Gamma$-set, whose associated finite \'{e}tale
scheme is exactly the Hilbert scheme $\mathcal{L}(S)$ of lines of $S$. 
Choosing a graph isomorphism $G(S) \cong G_d$, we obtain an action
of $\Gamma$ on $G_d$, which identifies $\Gamma$ with a subgroup of 
$\Aut(G_d)=W(\mathbf{E}_{9-d})$. Different choices of the graph isomorphism give rise to conjugate
subgroups, so we often view this as an action of $\Gamma$ on $G_d$, well-defined
up to conjugacy. If $O_1,\ldots,O_r$ denote the orbits of the action of $\Gamma$
on the vertices of $G(S)$, we call the unordered list $[\# O_1,\ldots,\# O_r]$
the \emph{orbit type} of $S$.

\section{The Hasse principle for lines on del Pezzo surfaces} \label{sec:main}
The aim of this section is to prove Theorem \ref{thm:main}.

\subsection{Proof strategy}
The cases $d=8$ and $d=9$ shall be handled
separately, using basic geometric and arithmetic facts, together with 
Proposition \ref{prop:Hilbert}. 
For $d\leq 7$, the following proposition is the key of our strategy.

\begin{proposition} \label{prop:criterion}
	Let $S$ be a del Pezzo surface of degree $d \leq 7$ over a number field~$k$.
	Let $K/k$ be the splitting field of $S$ with Galois group $\Gamma$.
	Then $S$ contains a line over almost all completions of $k$, but no line over $k$, if and only 
	if each conjugacy class of $\Gamma$ acts with a fixed point on the graph $G_d$,
	but $\Gamma$ acts without a fixed point on $G_d$.
	
	In which case, for every place $v$ of $k$ which is either archimedean or
	unramified in~$K$, the surface $S$ admits a line over $k_v$.
\end{proposition}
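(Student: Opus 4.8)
The plan is to reduce Proposition~\ref{prop:criterion} to the corresponding statement about finite \'{e}tale schemes, namely Lemma~\ref{lem:Chebotarev}, using the dictionary between del Pezzo surfaces, their Hilbert schemes of lines, and $\Gamma$-sets set up in \S\ref{sec:lines}. The essential point is that the vertex set of $G_d$, equipped with its $\Gamma$-action, \emph{is} the $\Gamma$-set $\mathcal{L}(S)(K)$ associated to the finite \'{e}tale scheme $\mathcal{L}(S)$; this is exactly what was recorded at the end of \S\ref{sec:lines}. Once this identification is in place, ``$S$ contains a line over $k$'' translates into ``$\mathcal{L}(S)$ has a $k$-point'', i.e.\ ``$\Gamma$ acts with a fixed point on the vertices of $G_d$'', and likewise ``$S$ contains a line over $k_v$'' translates into ``$\mathcal{L}(S)$ has a $k_v$-point''.

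First I would invoke Proposition~\ref{prop:Hilbert}\eqref{item:general}: since $d \leq 7$, the scheme $\mathcal{L}(S)$ is finite \'{e}tale over $k$, of the degree listed in the table, so the results of \S\ref{sec:HP} apply to it verbatim. Next I would note that the splitting field of the finite \'{e}tale scheme $\mathcal{L}(S)$ coincides with the splitting field $K$ of $S$: by definition $K$ is the smallest Galois extension over which all lines of $S_{\bar k}$ are individually defined, which is precisely the smallest Galois extension over which $\mathcal{L}(S)$ becomes split. Hence the Galois group attached to $\mathcal{L}(S)$ is $\Gamma$, and the associated $\Gamma$-set is the vertex set of $G_d$ (via a choice of graph isomorphism $G(S) \cong G_d$, which changes nothing up to conjugacy). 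Then ``$S$ contains a line over almost all completions of $k$, but no line over $k$'' is, by the dictionary, exactly the statement ``$\mathcal{L}(S)$ is locally soluble at all but finitely many places, but not globally soluble'', and Lemma~\ref{lem:Chebotarev} identifies this with the condition that each conjugacy class of $\Gamma$ acts with a fixed point on $G_d$ while $\Gamma$ acts without a fixed point on $G_d$. The final clause of Proposition~\ref{prop:criterion} --- solubility over $k_v$ for $v$ archimedean or unramified in $K$ --- is likewise the final clause of Lemma~\ref{lem:Chebotarev}, transported through the same dictionary (alternatively, directly from Lemma~\ref{lem:decomposition}, since for such $v$ the decomposition group is cyclic, generated by a Frobenius, and by hypothesis lies in a conjugacy class acting with a fixed point).

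There is essentially no hard step here: the content is bookkeeping, and the only thing that genuinely requires a sentence of justification is the claim that the splitting field of $\mathcal{L}(S)$ as a finite \'{e}tale scheme equals the splitting field $K$ of the del Pezzo surface $S$, together with the matching identification of the $\Gamma$-set $\mathcal{L}(S)(K)$ with the vertices of $G_d$. Both were already prepared in \S\ref{sec:lines}, so in the write-up I would simply cite that discussion and Proposition~\ref{prop:Hilbert}, and then apply Lemma~\ref{lem:Chebotarev}. If anything counts as the ``main obstacle'', it is purely notational: being careful that ``fixed point on the graph $G_d$'' means a vertex fixed by the group action (not an edge or a sub-configuration), so that it lines up exactly with ``fixed point on the associated $\Gamma$-set'' in Lemma~\ref{lem:Chebotarev}.
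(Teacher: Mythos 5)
Your proposal is correct and follows exactly the paper's route: the paper proves this proposition by noting that it follows immediately from Lemma~\ref{lem:Chebotarev}, via the identification (already set up in \S\ref{sec:lines} and Proposition~\ref{prop:Hilbert}) of the vertices of $G_d$ with the $\Gamma$-set attached to the finite \'etale scheme $\mathcal{L}(S)$. You have merely written out the bookkeeping that the paper leaves implicit.
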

\begin{proof}
	This follows immediately from Lemma \ref{lem:Chebotarev}.
\end{proof}

Using the criterion given by Proposition \ref{prop:criterion}, the first step is purely
computational. Namely for each $d\leq 7$, we enumerate all conjugacy classes
of subgroups of $\Aut(G_d)= W(\mathbf{E}_{9-d})$. We store those subgroups $\Gamma$ which
have no fixed point when acting on $G_d$, yet are such that each
conjugacy class of $\Gamma$ admits a fixed point. This
gives a list for the Galois groups of the splitting
fields of del Pezzo surfaces which could fail the Hasse principle
for lines, together with the associated Galois actions on the lines of the surface.
Note that by Lemma \ref{lem:HP}, such subgroups must be non-transitive and non-cyclic.
This classification can be done easily by hand for $d=7$ and $6$,
but such computations become increasingly more difficult,
so for $d\leq 5$ we use a computer.

Fix now some $d\leq 7$ and suppose that we have found such a
subgroup $\Gamma$ acting on $G_d$.
For each field extension $K/k$ with Galois group $\Gamma$,
we may associate via our dictionary
a finite \'{e}tale $k$-scheme $X$.
Note that by construction, this scheme will
be soluble at all those places of $k$ which are either archimedean or unramified in $K$.
The first step is to find such a field $K$ for which the associated scheme $X$
fails the Hasse principle, by taking care of the 
conditions at the ramified places. 
We do this using the results obtained 
in \S \ref{sec:HP} and \S \ref{sec:existence}.
In the most difficult case of $S_5$,
we also require work of Kedlaya \cite{Ked12}.

The next step is the ``inverse Galois problem'' for del Pezzo surfaces,
namely to find a del Pezzo surface $S$
whose Hilbert scheme of lines is isomorphic to the scheme $X$ found above.
We are fortunate since when $d \geq 3$, the cases which arise
correspond to rational surfaces. We therefore
construct them by blow-ups of $\PP^2$ followed by certain 
blow-downs. Only when $d=2$ or $1$ do non-rational examples occur,
and here we construct counter-examples using conic bundles.

\subsubsection{Implementing the strategy}
In the interests of clarity, we now explain how to
construct the graphs $G_d$, together with the associated 
action of $W(\mathbf{E}_{9-d})$, and how one runs the computational part of the proof strategy
in \texttt{Magma}. We do this now in the case of 
the graph $G_2$, explaining briefly at the end what changes need to be
made to construct the other graphs. The following commands
\begin{verbatim}
	R_E7 := RootDatum("E7");
	Cox_E7 := CoxeterGroup(R_E7);
	W_E7 := StandardActionGroup(Cox_E7);
\end{verbatim}
construct the group $W(\mathbf{E}_{7})$ viewed as a permutation group on $56$
letters, corresponding to the $56$ vertices of the graph $G_2$.
We now enumerate the conjugacy classes of subgroups of $W(\mathbf{E}_{7})$ on using the 
command
\begin{verbatim}
	SubgroupClasses(W_E7);
\end{verbatim}
and find those subgroups $\Gamma \subseteq W(\mathbf{E}_{7})$ which satisfy the conditions of 
Proposition~\ref{prop:criterion}, using standard commands in \texttt{Magma} for computing with permutation groups.

We construct the edges of $G_2$ by constructing the ``intersection matrix'' of $G_2$.
This is defined in a similar manner to the usual adjacency matrix of the graph, except that we define the intersection
of a vertex with itself to be ${(-1)}$, rather than  $0$, as these
correspond to $({-1})$-curves. We first consider the action of $W(\mathbf{E}_{7})$
on unordered pairs of distinct elements of the set $\{1,\ldots, 56\}$, via the command
\begin{verbatim}
	subs2 := Subsets(Set(GSet(W_E7)), 2);
\end{verbatim}
The action of $W(\mathbf{E}_{7})$ 
on \texttt{subs2} decomposes into $3$ orbits, corresponding to 
pairs of vertices with edges between them of multiplicities $0,1$ and $2$.
The orbits have sizes $28,756$ and $756$.
The smallest orbit corresponds to those pairs of vertices which have an edge between them of multiplicity $2$.
To identify which orbit of size $756$ corresponds to the adjacent 
vertices, one considers both possible options in constructing the 
intersection matrix of $G_2$.
One then notices that only one possibility leads to an intersection matrix with the required
rank $8$, the other giving a matrix of rank $29$.

To construct the graphs $G_3$ and $G_1$, one applies a similar method,
but using the root systems $W(\mathbf{E}_{6})$ and $W(\mathbf{E}_{8})$, respectively.
The \texttt{StandardActionGroup} command in \texttt{Magma} does not give the required group actions
for $G_d$ when $d \geq 4$. However these groups, together with
their required actions, are part of \texttt{Magma}'s library of transitive groups.
The commands
\begin{verbatim}
	W_E3 := TransitiveGroup(6,3); 
	W_E4 := TransitiveGroup(10,13); 
	W_E5 := TransitiveGroup(16,1328); 
\end{verbatim}
construct the corresponding groups for $G_6,G_5$ and $G_4$ respectively
(the case $G_7$ in the proof being trivial).
These programs can be found on the authors' webpages.

\begin{remark}
	The part of the algorithm which dominates the run time is the step
	which involves enumerating the conjugacy classes of subgroups of $W(\mathbf{E}_{9-d})$.
	For $d \geq 2$ this takes only a matter of seconds on a modern desktop computer,
	however the case $d=1$ takes around	$30$ minutes.
\end{remark}
We now proceed with the proof of Theorem \ref{thm:main}, following this strategy.

\subsection{Degree $9$}
Let $S$ be a del Pezzo surface of degree $9$.
By Proposition \ref{prop:Hilbert}, the scheme $\mathcal{L}(S)$
is also a del Pezzo surface of degree $9$. Therefore, by a classical 
theorem of Ch\^{a}telet (see e.g.~\cite[Cor.~2.6]{CM96}), the scheme
$\mathcal{L}(S)$ satisfies the Hasse principle, as required.

\subsection{Degree $8$} \label{sec:DP8}
If $S_{\bar k} \not \cong \PP^1 \times \PP^1$
then $S_{\bar k}$ contains only a single line, hence $S$ contains a line
over $k$. Thus the Hasse principle for lines trivially holds.

Let now $S$ be a twist of $\PP^1 \times \PP^1$. Fix a choice of isomorphism
$\Pic S_{\bar k}~\cong~\ZZ^2$, to obtain an injective map
$\Pic S \to \ZZ^2$. We say that a divisor $D$ on $S$ has \emph{type} $(a,b)$,
if its class has image $(a,b)$ under this map. The lines on $S$ are
exactly the effective divisors of type $(1,0)$ or $(0,1)$.

\begin{lemma} \label{lem:C_1xC_2}
	Let $S$ be a twist of $\PP^1 \times \PP^1$ over a number field $k$.
	Suppose that $S$ contains a line over all but finitely many completions of $k$.
	Then $\mathcal{L}(S)$ is reducible. In particular, 
	there exist conics $C_1$ and $C_2$	over $k$, unique up to isomorphism and reordering,
	such that
	$$S \cong C_1 \times C_2 \quad \mbox{and} \quad \mathcal{L}(S) \cong C_1 \sqcup C_2.$$
\end{lemma}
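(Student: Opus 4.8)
The plan is to reduce the statement to the arithmetic of the quadratic extension that governs the twist, and then invoke the dictionary from \S\ref{sec:HP}. First I would recall the structure of twists of $\PP^1 \times \PP^1$. The automorphism group of $(\PP^1 \times \PP^1)_{\bar k}$ is $(\PGL_2 \times \PGL_2) \rtimes \ZZ/2\ZZ$, where the factor $\ZZ/2\ZZ$ swaps the two rulings; correspondingly the Galois action on $\Pic S_{\bar k} \cong \ZZ^2$ factors through $\ZZ/2\ZZ = \Aut(G_8')$ permuting the two classes $(1,0)$ and $(0,1)$. Thus there is a canonically associated quadratic \'etale $k$-algebra $\ell$ (the "field of definition of the rulings"): either $\ell = k \times k$, in which case both rulings are individually Galois-stable, or $\ell$ is a genuine quadratic field extension of $k$, in which case the Galois group swaps the two rulings. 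In the first case $S$ is a product $C_1 \times C_2$ of two conics over $k$ in the obvious way, and $\mathcal{L}(S) \cong C_1 \sqcup C_2$; in the second case $\mathcal{L}(S)$ is irreducible, being the Weil restriction $\Res_{\ell/k}$ of a conic, so $S$ contains a line over $k$ if and only if it does so over $\ell$.

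The key step is to show that the hypothesis — $S$ contains a line over all but finitely many completions of $k$ — forces the second case not to occur. Suppose for contradiction that $\ell/k$ is a proper quadratic extension. By Proposition~\ref{prop:Hilbert}\eqref{item:P1_times_P1}, $\mathcal{L}(S)_{\bar k} = \PP^1 \sqcup \PP^1$, and the Galois action permutes the two copies according to the nontrivial action on $\Pic$; hence $\mathcal{L}(S) = \Res_{\ell/k} C$ for a conic $C$ over $\ell$. Now $S$ contains a line over a completion $k_v$ if and only if $\mathcal{L}(S)(k_v) \neq \emptyset$, i.e.\ if and only if $v$ splits in $\ell$ and $C$ has an $\ell_w$-point for some $w \mid v$ — OR $v$ is inert/ramified in $\ell$ and $C$ has an $\ell_w$-point (automatic for the local conic when $\ell_w$ is a nontrivial extension of $k_v$? no — one must be careful). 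The clean way is to apply Lemma~\ref{lem:Chebotarev} directly to the finite \'etale scheme $\mathcal{L}(S)$: by Chebotarev, any element of $\Gamma = \Gal(K/k)$ occurs as a decomposition group at infinitely many places, so if $S$ has a line at all but finitely many places, then every cyclic subgroup of $\Gamma$ fixes a vertex of the graph. Taking a lift of the generator of $\ZZ/2\ZZ$ that swaps the rulings, it must fix a line, which is impossible since it exchanges the two $\PP^1$'s — unless it acts trivially on $\pi_0(\mathcal{L}(S))$, forcing $\ell = k \times k$. I expect the only subtlety here is handling the decomposition group at places inert in $\ell$ carefully, and noting that "infinitely many places inert in $\ell$" exist precisely when $\ell/k$ is a field; this is where Chebotarev (or just Dirichlet for quadratic fields) is essential.

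Once $\mathcal{L}(S)$ is reducible, the remaining claims are formal. Since $\Gamma$ acts trivially on $\pi_0(\mathcal{L}(S))$, each ruling of $S_{\bar k}$ descends to a conic bundle structure $S \to C_i$ over $k$ with fibres of the other type; projecting to both factors gives a morphism $S \to C_1 \times_k C_2$ which is an isomorphism over $\bar k$, hence an isomorphism. The identification $\mathcal{L}(S) \cong C_1 \sqcup C_2$ is then immediate, since the lines of type $(1,0)$ are the fibres of $S \to C_2$, parametrised by $C_2$, and symmetrically. Uniqueness of $C_1, C_2$ up to isomorphism and reordering follows because they are recovered intrinsically from $S$ as the two connected components of $\mathcal{L}(S)$ (equivalently, as the targets of the two conic bundle structures on $S$), which are permuted at worst by the choice of labelling of $\Pic S_{\bar k}$. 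The main obstacle in the whole argument is the bookkeeping in the second paragraph — translating "a line at almost every place" into a statement about decomposition groups and correctly eliminating the Weil-restriction case — but this is exactly the content already packaged in Lemma~\ref{lem:Chebotarev}, so it should be short.
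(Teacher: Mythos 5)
Your proposal is essentially the paper's proof: both arguments reduce to the degree-$2$ finite \'etale scheme of connected components of $\mathcal{L}(S)$ (your quadratic algebra $\ell$ is exactly $\pi_0(\mathcal{L}(S))$), apply Lemma~\ref{lem:Chebotarev} to force it to be split, and then assemble $S \cong C_1 \times C_2$ from the two resulting conic bundle projections. One imprecision to fix: $\mathcal{L}(S)$ itself is \emph{not} finite \'etale here (it is a curve, geometrically $\PP^1 \sqcup \PP^1$), so Lemma~\ref{lem:Chebotarev} must be applied to $\pi_0(\mathcal{L}(S))$ rather than to $\mathcal{L}(S)$ ``directly'' --- which is what your argument actually does in the end --- and likewise the irreducible scheme in the swapped case is the $\ell$-conic viewed as a $k$-scheme, not the Weil restriction $\Res_{\ell/k}$ of a conic (which would be a surface).
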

\begin{proof}
	By Proposition \ref{prop:Hilbert}, the scheme $\pi_0(\mathcal{L}(S))$ (see \S \ref{sec:notation})
	is a finite \'{e}tale scheme of degree $2$.
	By assumption, it has a point over all but finitely many completions of $k$.
	It follows easily from Lemma \ref{lem:Chebotarev} that $\pi_0(\mathcal{L}(S))$
	is split. Hence, by Proposition \ref{prop:Hilbert} there exist conics $C_1$ and $C_2$
	such that 
	$$
		\mathcal{L}(S) \cong C_1 \sqcup C_2. \label{eqn:two_conics}
	$$
	In particular, the surface
	$S$ contains divisors of type $(2,0)$ and $(0,2)$. The morphisms
	associated to these divisors give maps $S \to C_i$,
	which when combined together give the required isomorphism $S \cong C_1 \times C_2$.
\end{proof}

It is quite easy to construct counter-examples here.
Namely, by the classification of conics over $k$
(see e.g \cite[Thm.~72.1]{O'Me73}),
there exist two conics $C_1$ and $C_2$ over 
$k$ with $C_1(k) = \emptyset$ and $C_2(k) = \emptyset$,
yet for each place $v$ of $k$ there is some $i \in \{1,2\}$
such that 
$C_i(k_v) \neq \emptyset$.
In which case, the surface
$$S=C_1 \times C_2,$$
is a counter-example to the Hasse principle for lines.
Indeed, the scheme $\mathcal{L}(S) = C_1 \sqcup C_2$ fails the Hasse principle.
On the other hand, we have the following.

\begin{lemma} \label{lem:quadrics}
	Let $S \subset \PP^3$ be a smooth quadric surface over a number field $k$.
	Then $S$ satisfies the Hasse principle for lines.
\end{lemma}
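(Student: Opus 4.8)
The plan is to use the classification of smooth quadric surfaces in $\PP^3$ together with the structure of their Hilbert schemes of lines, and then reduce to a Hasse-principle statement about low-degree \'etale schemes that we have already handled. A smooth quadric surface $S \subset \PP^3$ over $k$ is a del Pezzo surface of degree $8$; either $S_{\bar k} \cong \PP^1 \times \PP^1$ split as a product, or it is a nontrivial twist. First I would dispose of the case where $\mathcal L(S)$ is split: then $S$ visibly contains two lines over $k$ (one from each ruling), so the Hasse principle for lines holds trivially. Thus we may assume $\pi_0(\mathcal L(S))$ is a nonsplit \'etale scheme of degree $2$, i.e.\ $\pi_0(\mathcal L(S)) = \Spec \ell$ for a quadratic extension $\ell/k$, and $\mathcal L(S)$ becomes (over $\ell$) a pair of conjugate conics interchanged by $\Gal(\ell/k)$. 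Concretely, $\mathcal L(S) = \Res_{\ell/k} C$ is the Weil restriction of a conic $C$ over $\ell$, and a $k$-line on $S$ is the same as an $\ell$-point of $C$.

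The key step is then the following: I claim that if $S$ has no line over $k$, then $S$ has no line over $k_v$ for infinitely many places $v$ of $k$. Since $\mathcal L(S)$ is a finite \'etale scheme, by Lemma~\ref{lem:Chebotarev} it suffices to exhibit an element of the Galois group $\Gamma$ of the splitting field acting without a fixed point on $\mathcal L(S)(K)$. Here the crucial geometric input is that $S$ being a smooth quadric \emph{containing no line over $k$} forces the associated conic $C/\ell$ to have no $\ell$-point, but also the action on lines is highly constrained: any automorphism of the graph $G_8$ (a two-point set with no edges, by Proposition~\ref{prop:Hilbert}\eqref{item:general}, $\deg \mathcal L(S) = 1$ per ruling—wait, more precisely the two rulings) — I should rather argue directly. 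Over $\bar k$ the lines on $S$ form $\PP^1 \sqcup \PP^1$; the $\Gamma$-action permutes the two rulings, and within a ruling acts through a subgroup of $\mathrm{PGL}_2$. If $\Gamma$ fixes each ruling, then each ruling is a conic $C_i/k$ and $S \cong C_1 \times C_2$; a line over $k_v$ exists iff $C_1(k_v) \neq \emptyset$ or $C_2(k_v) \neq \emptyset$, and since each $C_i$ with no $k$-point fails to have $k_v$-points for infinitely many $v$ (Hasse–Minkowski gives local solubility everywhere only if globally soluble), one checks that there are infinitely many $v$ at which \emph{both} $C_i(k_v) = \emptyset$: this is where one uses that a conic without a rational point has no points over a positive-density set of places, and a union of two density sets still misses a positive density of places unless the conics are ``complementary'' — but a conic's set of bad places is, via the Brauer class, a set of places of density $\tfrac12$ whose members are cut out by quadratic-residue conditions, and two such sets cannot cover all places (the product quaternion class would then be trivial, forcing one $C_i$ to be split). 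If instead $\Gamma$ swaps the two rulings, the subgroup $\Gamma' \subset \Gamma$ fixing the rulings has index $2$, the two rulings are conjugate conics over the fixed field $\ell$, and a $k$-line would be a $\Gamma$-fixed point, hence a $\Gamma'$-fixed point in one ruling that is preserved by the swap — impossible unless it already exists, and local solubility at $v$ for $v$ inert in $\ell$ again needs an $\ell_w$-point of the conic $C$, which fails for infinitely many $w$.

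Assembling this, I would structure the write-up as: (1) reduce to $\mathcal L(S)$ nonsplit via Lemma~\ref{lem:C_1xC_2}, obtaining either $S \cong C_1 \times C_2$ with $\mathcal L(S) = C_1 \sqcup C_2$, or the Weil-restriction case; (2) in the product case, invoke that the Brauer classes $[C_1], [C_2] \in \Br(k)[2]$ are nonzero, observe the set of places where $C_i$ is anisotropic equals $\{v : [C_i]_v \neq 0\}$, and note these two sets cannot cover $\Val(k)$ since otherwise $[C_1] + [C_2]$ would be locally trivial everywhere, hence zero, so one of $C_1, C_2$ would be everywhere locally soluble, hence (Hasse–Minkowski) soluble over $k$, contradiction — therefore infinitely many $v$ have $C_1(k_v) = C_2(k_v) = \emptyset$, i.e.\ $S$ has no $k_v$-line; (3) in the Weil-restriction case, $\mathcal L(S)_\ell = C \sqcup C^\sigma$ with $C/\ell$ anisotropic, so $[C]_w \neq 0$ for infinitely many places $w$ of $\ell$; choosing such $w$ lying over a place $v$ of $k$ that is inert in $\ell$, Lemma~\ref{lem:decomposition} shows $S$ has no $k_v$-line. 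Either way we contradict the hypothesis of local solubility almost everywhere, so by Lemma~\ref{lem:Chebotarev} (or directly) the Hasse principle for lines holds for $S$. The main obstacle I anticipate is step (2): making precise and correct the claim that the anisotropic loci of two nontrivial conics over $k$ cannot together exhaust all places — this is essentially the statement that $\Br(k)[2]$ injects into $\bigoplus_v \Br(k_v)[2]$ (a cornerstone of class field theory, equivalently Hasse–Minkowski for ternary forms), and the bookkeeping that ``cover all places'' forces $[C_1]+[C_2]=0$ needs the observation that $[C_1 \times C_2]$ anisotropic at $v$ iff \emph{both} factors are anisotropic at $v$, together with $[C_i]_v = 0$ for all but finitely many $v$. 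Once that is in hand, everything else is formal manipulation with the dictionary of \S\ref{sec:HP}.
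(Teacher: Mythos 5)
Your reduction via Lemma~\ref{lem:C_1xC_2} and your treatment of the swapped-rulings case are fine (if $v$ is inert in $\ell$ the decomposition group interchanges the two rulings, so no line is defined over $k_v$, and there are infinitely many such $v$). The genuine gap is in your step~(2), the case $S \cong C_1 \times C_2$ with $\mathcal{L}(S) = C_1 \sqcup C_2$ and both $C_i$ anisotropic over $k$. What you must produce is a place $v$ with $C_1(k_v) = C_2(k_v) = \emptyset$, i.e.\ a \emph{common} ramified place of the two quaternion classes $[C_1],[C_2]$. Your claim that there are \emph{infinitely many} such $v$ is false outright --- each set $T_i = \{v : [C_i]_v \neq 0\}$ is finite --- and your argument addresses the wrong condition: you show $T_1 \cup T_2$ cannot exhaust $\Val(k)$ (true but trivial, and it produces places where both conics \emph{are} soluble), whereas what is needed is $T_1 \cap T_2 \neq \emptyset$. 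That intersection can perfectly well be empty for two anisotropic conics: take $[C_1]$ and $[C_2]$ ramified at disjoint sets of places. Then $C_1 \sqcup C_2$ is everywhere locally soluble with no $k$-point --- this is exactly the counter-example in \S\ref{sec:DP8} showing that general degree-$8$ del Pezzo surfaces \emph{fail} the Hasse principle for lines. Since nowhere in step~(2) do you invoke the hypothesis $S \subset \PP^3$, your argument, were it correct, would prove the lemma for all twists of $\PP^1 \times \PP^1$ and contradict that counter-example.

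The missing input is precisely where the quadric structure must enter: for a smooth quadric in $\PP^3$ whose two rulings are separately Galois-stable, the parametrizing conics satisfy $[C_1] + [C_2] = 0$, hence $C_1 \cong C_2$ and $T_1 = T_2 \neq \emptyset$. One way to see this is that the hyperplane class $\OO_S(1)$ has type $(1,1)$ and lies in $\Pic S$, so the Brauer obstructions of the classes $(1,0)$ and $(0,1)$ sum to zero; equivalently, a rank-$4$ quadratic form of square discriminant is similar to the norm form of a quaternion algebra, so $S \cong C \times C$. With that supplied, your step~(2) closes (with ``at least one place'', not ``infinitely many''). The paper's own proof sidesteps the case analysis: a line on a quadric in $\PP^3$ is an honest $\PP^1$, so local solubility for lines forces $S(k_v) \neq \emptyset$ for every $v$, whence $\Pic S \cong (\Pic S_{\bar k})^{G_k}$, and the Galois-invariant class $(1,0)$ (invariant by Lemma~\ref{lem:C_1xC_2}) then descends to an effective divisor, i.e.\ a line over $k$.
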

\begin{proof}
	Assume that $S$ contains a line over every completion of $k$.
	Then $S$ is everywhere locally soluble, hence
	$\Pic S \cong (\Pic S_{\bar k})^{G_k}$ (see e.g. \cite[Cor.~2.5]{CM96}).
	By Lemma \ref{lem:C_1xC_2} we know that $S$ contains a divisor
	of type $(2,0)$. Thus any divisor class on $S_{\bar k}$ of type $(1,0)$ is Galois
	invariant, hence is defined over~$k$. Therefore $S$ contains a line,
	so the Hasse principle for lines holds.
\end{proof}

\subsection{Degree $7$}
The graph $G_7$ is the path graph on $3$ vertices.
In particular the automorphism group of the graph fixes a vertex,
so there is always a line defined over $k$. Hence the Hasse principle
for lines trivially holds.

\subsection{Degree $6$}
Here the graph $G_6$ is the cycle graph on $6$ vertices, which
we identify with a regular hexagon. The automorphism group is the dihedral group $D_6$,
acting in the usual way.

Let $\Gamma \subseteq D_6$ be a subgroup. 
If $\Gamma$ is transitive or cyclic, then 
by Lemma \ref{lem:HP}
the Hasse principle holds. The group $D_6$
contains only two conjugacy classes of non-transitive non-cyclic subgroups,
which are isomorphic to the Klein four group $V_4$
and the dihedral group $D_3$ respectively.
Both these groups admit an element which acts
as a non-trivial rotation, hence has no fixed point. Thus by Proposition 
\ref{prop:criterion}, the Hasse principle for lines holds.

\subsection{Degree $5$} \label{sec:DP5}
\subsubsection{Results of computer experiments}
We now present the results obtained from our computer
experiments, following our proof strategy. Here
$G_5$ is the Petersen graph, with automorphism group
$W(\mathbf{E}_4) \cong W(\mathbf{A}_4) \cong S_5$ of order $120$. Our results are summarised
in Table \ref{tab:DP5}.

\begin{table}[htb]
 \centering
 \begin{tabular}{c|c|c|c|c}
  subgroups & transitive & cyclic & fixed point & satisfy Prop.~\ref{prop:criterion} \\
  \hline
  $19$&$3$&$7$&$9$ &$2$
 \end{tabular} 
 \caption{Results of experiments for del Pezzo surfaces of degree $5$}
 \label{tab:DP5}
\end{table}
We now explain the notation in the table, with similar notation occurring
for the other tables in our paper. The first column gives the 
number of \emph{conjugacy classes} of subgroups of $W(\mathbf{E}_4)$. We only consider
subgroups up to conjugacy, as conjugate actions give rise to isomorphic finite \'{e}tale
schemes. We then list the number of these which act transitively on $G_5$, the number 
which are cyclic, the number which act with a fixed
point, and the number which satisfy the conditions of Proposition \ref{prop:criterion}.
Ultimately only the last column will be of interest, but we have
included the other columns to give a more complete picture.

The two subgroups of $W(\mathbf{E}_4)$ which satisfy Proposition \ref{prop:criterion}
are isomorphic to $V_4$ and $A_4$, respectively.
They have the following orbit types.
\begin{enumerate}
	\item $V_4$ with orbit type $[2,2,2,4]$.
	\item $A_4$ with orbit type $[4,6]$.
\end{enumerate}


In both cases, the orbits of length $4$ consist of mutually non-adjacent vertices.
This means that the corresponding del Pezzo surfaces (if they exist)
contain $4$ skew lines. Contracting these lines, we obtain a del Pezzo surface $S'$
of degree $9$. A theorem of Enriques and Swinnerton-Dyer (see e.g.~\cite[Cor.~3.1.5]{Sko01})
states that any del Pezzo surface of degree $5$ contains a rational point, in particular
so does $S'$ and hence $S' \cong \PP^2$. Thus any such surface 
is obtained by blowing-up $\PP^2$ in a closed point
of degree $4$.  We now explain how to realise these orbit types
and how to construct counter-examples in each case.

\subsubsection{$V_4$} \label{sec:V_4}
Let $K/k$ be a $V_4$-extension and let $S$ be the blow-up
of $\PP^2$ in a closed point of degree $4$ in general position, with residue field
$K$. Using the description of the lines on $S$ given in \S \ref{sec:classification},
it is simple to see that $S$ has the required orbit type $[2,2,2,4]$.
In particular, there are three non-conjugate subgroups of $V_4$ of order $2$
which act with a fixed point on $G_5$.
To see that $S$ satisfies the conditions of Proposition \ref{prop:criterion},
it suffices to note the following:
\begin{itemize}
	\item $V_4$ contains exactly three subgroups of order $2$.
	\item Every element of $V_4$ lies inside such a subgroup.
\end{itemize}
Therefore $S$ contains a line over almost all completions of $k$.
To take care of the ramified primes, we may choose $K/k$ as in Lemma \ref{lem:solvable},
which guarantees that $S$ contains lines over \emph{all} completions of $k$.
Thus $S$ is the required counter-example to the Hasse principle for lines.

As an example, one may blow-up
the closed point which over $\bar k$ is the union of
$$[\sqrt{a}:\sqrt{b}:1],[-\sqrt{a}:\sqrt{b}:1], [\sqrt{a}:-\sqrt{b}:1] \mbox{ and } [-\sqrt{a}:-\sqrt{b}:1], $$
where $a,b$ are as in Lemma \ref{lem:biquadratic}. 

\subsubsection{$A_4$} \label{sec:A_4}
Here the construction is similar to the previous case.
One takes $S$ to be the blow-up of $\PP^2$ in a closed point
of degree $4$, whose splitting field $K$ has Galois group $A_4$.
The action of $A_4$ is transitive on the collection of lines passing
through these $4$ points over $\bar k$, thus the orbit type is the required $[4,6]$.
In particular there are subgroups of $A_4$ of orders $2$ and $3$ which act 
with a fixed point on $G_5$.
To see that $S$ satisfies the conditions of Proposition \ref{prop:criterion},
it suffices to note the following:
\begin{itemize}
	\item $A_4$ contains only one conjugacy class of subgroups of orders $2$ and $3$.
	\item Every element of $A_4$ lies inside such a subgroup.
\end{itemize}
Thus on choosing $K$ as in Lemma
\ref{lem:solvable}, we obtain the required counter-example.

As an explicit example of $K$ which works when $k=\QQ$,
one may take the splitting field of the
polynomial
$$f(x)=x^4-x^3-7x^2+2x+9.$$
This polynomial was taken from \cite{KM01}.
Let $L$ be the field defined by $f$. It has discriminant $163^{2}$
and its Galois closure $K$ has Galois group $A_4$.
It follows from Lemma \ref{lem:Hasse}
that this gives the required counter-example, 
on noticing that we have a factorisation
$$(163) = \mathfrak{p}_1 \mathfrak{p}_2^3,$$
into prime ideals of $L$, where both $\mathfrak{p}_1$
and $\mathfrak{p}_2$ have inertia degree $1$.

\subsection{Degree $4$}
\subsubsection{Results of computer experiments}
Here $G_4$ is the Clebsch graph, with automorphism group
$W(\mathbf{E}_5) \cong W(\Dfive) \cong (\ZZ/2\ZZ)^4 \rtimes S_5$ of order $1920$.
Our results are summarised in Table \ref{tab:DP4}.

\begin{table}[ht]
 \centering
 \begin{tabular}{c|c|c|c|c}
  subgroups & transitive & cyclic & fixed point & satisfy Prop.~\ref{prop:criterion} \\
  \hline
  $197$&$51$&$18$&$19$&$0$
 \end{tabular} 
 \caption{Results of experiments for del Pezzo surfaces of degree $4$}
 \label{tab:DP4}
\end{table}
There are $116$ conjugacy classes of subgroups of $W(\mathbf{E}_5)$
which are not transitive, nor cyclic, nor admit a fixed point.
Moreover, none of these satisfy the conditions of Proposition \ref{prop:criterion}.
Hence the Hasse principle for lines holds.
We will give a non-computer-assisted proof 
of this result in \S \ref{sec:conceptual}.

\subsection{Degree $3$} \label{sec:DP3}
\subsubsection{Results of computer experiments}
Here $G_3$ is the (dual of the) Schl\"{a}fli graph,
with automorphism group $W(\mathbf{E}_6)$ of order $51840$.
Our results are summarised in Table \ref{tab:DP3}.

\begin{table}[ht]
 \centering
 \begin{tabular}{c|c|c|c|c}
  subgroups & transitive & cyclic & fixed point & satisfy Prop.~\ref{prop:criterion} \\
  \hline
  $350$&$25$&$25$&$172$&$3$
 \end{tabular}
 \caption{Results of experiments for del Pezzo surfaces of degree $3$}
 \label{tab:DP3}
\end{table}
Our experiments show that exactly $3$ out of the 
$350$ conjugacy classes of $W(\Esix)$ satisfy
the conditions of Proposition \ref{prop:criterion}.
Surprisingly, these all admit a very special orbit type.
The three subgroups which arise are as follows.
\begin{enumerate}
	\item $D_5$ with orbit type $[2,5,5,5,10]$.
	\item $\ZZ/5\ZZ \rtimes \ZZ/4\ZZ$ with orbit type $[2,5,10,10]$.
	\item $S_5$ with orbit type $[2,5,10,10]$.
\end{enumerate}
Here $\ZZ/5\ZZ \rtimes \ZZ/4\ZZ$ denotes a semi-direct
product of $\ZZ/5\ZZ$ with $\ZZ/4\ZZ$ for which the induced
morphism $\ZZ/4\ZZ \to (\ZZ/5\ZZ)^*$ is an isomorphism
(such a group is unique up to isomorphism).

In each case, the orbit of length two consists of two non-adjacent 
vertices in the graph, and there is also an orbit of length five
consisting of mutually non-adjacent vertices. Therefore any cubic surface
of this type, if it exists, admits two skew lines defined over a 
quadratic field extension. Such surfaces are rational. Indeed, 
contracting the two skew conjugate lines
we obtain a del Pezzo surface of degree $5$, and any such surface is 
rational \cite[Thm.~29.4]{Man86}.

\subsubsection{Constructing these surfaces} \label{sec:constructions}

We now explain how to construct these surfaces. Choose two closed points $P$ and $Q$ in $\PP^2$
of degrees $2$ and $5$, respectively, such that $P \sqcup Q$ is in general position.
The del Pezzo surface $\smash{S' = \Bl_{P \sqcup Q} \PP^2}$ of degree $2$ contains exactly two lines.
These are the strict transforms $\smash{\widetilde{L}}$
and $\smash{\widetilde{C}}$ of the line $L$ through the two quadratic points and the conic $C$
passing through the five quintic points over $\smash{\bar k}$.
Contracting $\smash{\widetilde{L}}$, we obtain a cubic surface $S$
which contains no lines, since $\smash{\widetilde{C}}$ and $\smash{\widetilde{L}}$ intersect.
Note that if we chose instead to contract 
$\smash{\widetilde{C}}$, we would obtain an isomorphic surface,
as $\smash{\widetilde{L}}$ and $\smash{\widetilde{C}}$ are exchanged by the
Geiser involution on $\smash{S'}$ (see \cite[\S 8.7.2]{Dol12}). One way to see this is by
noting that the Geiser involution preserves the intersection product and
fixes no line, combined with the fact that $\smash{\widetilde{C}}$ is the unique line on $S'$ with 
$\smash{\widetilde{C} \cdot \widetilde{L} =2}$. Using the classification given
in \S \ref{sec:classification}, the lines on $S$ correspond to the following curves:
\begin{enumerate}
 \item The $2$ singular cubic curves which pass through all $7$ points and which have a double point
 at exactly one of the quadratic points.
 \item The $5$ exceptional curves above the quintic points.
 \item The $10$ conics passing through the two quadratic points and three of the five quintic points.
 \label{item:conics}
 \item The $10$ lines passing through one of the quadratic points and one of the quintic points.
 \label{item:lines}
\end{enumerate}

This description gives a simple geometric way to visualise the Galois 
action on the lines. Such surfaces always contain an orbit of length two and an orbit
of length five, which each consist of skew lines, together with an orbit of length ten
given by curves of the type $\eqref{item:lines}$. The union of this latter orbit
with the orbit of length two is a Galois invariant double six on the surface (see \cite[Rem.~V.4.9.]{Har77}
for a definition of this configuration). The Galois action on the curves occurring in 
$\eqref{item:conics}$ is the same as the action of the Galois group of the splitting field $K$ of 
$Q$ on pairs of distinct $K$-points of~$Q$. Of the five transitive conjugacy
classes of subgroups $\Gamma$ of $S_5$, there are two orbits of length five
exactly when $\Gamma \cong \ZZ/5\ZZ$ or $\Gamma \cong D_5$.
Hence this constructs surfaces with orbit types $[2,5,5,5,10]$ and $[2,5,10,10]$. Moreover,
it is not difficult to see that every such surface arises in this way.

We now specialise the residue fields of $P$ and $Q$ to obtain the
cases which arose from our computer experiments. In what follows we keep the above notation.

\subsubsection{$D_5$} \label{sec:D_5}
Assume now that the residue fields of $P$ and $Q$ belong to the same $D_5$-extension
$K/k$. The orbit type is $[2,5,5,5,10]$ in this case.
Thus there are subgroups of $D_5$ with orders $2$ and $5$ which act with a fixed point on $G_3$.
To see that $S$ satisfies the conditions of Proposition \ref{prop:criterion},
it suffices to note the following:
\begin{itemize}
	\item $D_5$ contains only one conjugacy class of subgroups of orders $2$ and $5$.
	\item Every element of $D_5$ lies inside such a subgroup.
\end{itemize}
Therefore we see that $S$ contains a line over almost all completions of $k$.
To obtain the required counter-example over $k$, we choose $K$ as
in Lemma \ref{lem:solvable}.

For an explicit example of such a $K$ when $k=\QQ$,
one may take the splitting field of the
polynomial
$$f(x)=x^5 - 2x^4 + 2x^3 - x^2 + 1.$$
Using \texttt{Magma}, one finds that the field $L$ defined by $f$ has
discriminant $47^{2}$, and its splitting field $K$
is a $D_5$-extension of $\QQ$. 
It follows from Lemma \ref{lem:Hasse}
that this gives the required counter-example, 
on noticing that we have a factorisation
$$(47) = \mathfrak{p}_1 \mathfrak{p}_2^2 \mathfrak{p}_3^2,$$
into prime ideals of $L$, where each prime
ideal has inertia degree $1$.

\subsubsection{$\ZZ/5\ZZ \rtimes \ZZ/4\ZZ$} \label{sec:Z/5ZxZ/4Z}
Here we let $\Gamma = \ZZ/5\ZZ \rtimes \ZZ/4\ZZ$.
We assume that the splitting field $K/k$ of $Q$ 
is a $\Gamma$-extension of $k$,
and that the residue field of $P$ is the unique quadratic subfield
of $K$. In which case the orbit type is $[2,5,10,10]$, hence there are subgroups of $\Gamma$ 
of orders $4$ and $10$ which act with a fixed point on~$G_3$.
To see that $S$ satisfies the conditions of Proposition \ref{prop:criterion},
it suffices to note the following:
\begin{itemize}
	\item $\Gamma$ contains only one conjugacy class of subgroups of order $4$ and $10$.
	\item Every element of $\Gamma$ lies inside such a subgroup.
\end{itemize}
As before, we obtain a counter-example on choosing $K$ as
in Lemma \ref{lem:solvable}.

Here one can write down a simple family of such fields when $k=\QQ$.
Namely, let $p$ be a prime which is congruent to $1$ modulo $25$ 
(infinitely such primes exist by Dirichlet's theorem).
We then take $K$ to be the splitting field of the polynomial
$$f(x)=x^5 - p.$$
It is simple to check that this is a $\Gamma$-extension
and that the unique quadratic subfield of $K$ is $\QQ(\sqrt{5})$. 
Therefore $\mathcal{L}(S)$ contains a subscheme isomorphic to
$$(x^2 - 5)(x^5 - p) = 0.$$
It suffices to show that this scheme fails the Hasse principle. It satisfies
the conditions of Lemma \ref{lem:Chebotarev}, hence we need only look at the ramified primes, namely $5$ and $p$.
As $p \equiv 1 \pmod 5$, the field $\QQ(\sqrt{5})$ splits at
$p$, thus $S$ contains a line over $\QQ_p$. Moreover, the polynomial
$f$ has a non-singular root modulo $25$, hence by Hensel's lemma it admits a root over $\QQ_5$.
We therefore deduce that $S$ fails the Hasse principle for lines, as required.

The explicit cubic surface \eqref{eqn:cubic} was of this type, with $p=101$.
We shall explain how we wrote down the equation in this case in \S \ref{sec:explicit}.

\subsubsection{$S_5$} \label{sec:S_5}
We now assume that the splitting field $K/k$ of $Q$ 
is an $S_5$-extension of $k$,
and that the residue field of $P$ is the unique quadratic subfield
of $K$. The orbit type here is $[2,5,10,10]$, hence there are subgroups of $S_5$
of orders $12,12,24$ and $60$ which act with a fixed point on $G_3$.
Using the explicit description of the lines on $S$ given in \S \ref{sec:constructions},
one checks that the two subgroups of order $12$ are non-conjugate in $S_5$.
To see that $S$ satisfies the conditions of Proposition \ref{prop:criterion},
it suffices to note the following:
\begin{itemize}
	\item Up to conjugacy, $S_5$ contains exactly two subgroups of order $12$,
	one of order $24$ and one of order $60$.
	\item Every element of $S_5$ lies inside such a subgroup.
\end{itemize}
Therefore $S$ contains a line over almost all completions of $k$.
To handle the ramified primes we will have to choose $K$ very carefully.
Note that Lemma \ref{lem:solvable} does not apply as $S_5$ is non-solvable.

We begin by noticing that the only solvable subgroups	of $S_5$ which do not act
with a fixed point on $G_3$ are all isomorphic to $\ZZ/5\ZZ \rtimes \ZZ/4\ZZ$ 
(in fact every subgroup of $S_5$ of order $20$ has this form). We need 
only consider solvable subgroups, since decomposition groups are always solvable 
\cite[Cor.~IV.2.5]{Ser80}. We shall therefore choose $K$ so that such groups do not
occur as decomposition groups.

\begin{lemma} \label{lem:S_5}
	Let $k$ be a number field. Then there exists an $S_5$-extension
	$K/k$, none of whose decomposition groups is isomorphic to 
	$\ZZ/5\ZZ \rtimes \ZZ/4\ZZ$.
\end{lemma}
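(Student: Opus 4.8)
\textbf{Proof proposal for Lemma~\ref{lem:S_5}.}
The plan is to build $K$ by specifying its local behaviour at finitely many primes and then invoking a realisation result for $S_5$ that respects prescribed local conditions. First I would recall that $S_5$ acts on $G_3$ so that the only solvable subgroups \emph{without} a fixed point are the subgroups of order $20$, all isomorphic to $\ZZ/5\ZZ\rtimes\ZZ/4\ZZ$; every proper subgroup of such a group has order dividing $20$, and one checks directly (using the explicit orbit description in \S\ref{sec:constructions}) that the subgroups of orders $1,2,4,5,10$ all do fix a vertex. So a decomposition group $D$ fails to fix a point on $G_3$ \emph{only if} $D\supseteq$ a subgroup of order $20$, i.e.\ only if $20\mid \#D$ and $D$ is solvable of order $20$ (no solvable group of $S_5$ has order a larger multiple of $20$, since $|S_5|=120$ and the only order-$40$, order-$60$ solvable possibilities either don't embed in $S_5$ as order $40$, or, for order $60$, equal $A_5$ which is non-solvable). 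Hence it suffices to produce an $S_5$-extension $K/k$ in which no decomposition group has order exactly $20$.

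The key step is to force tame ramification of a controlled shape. Following the construction of quintic fields with squarefree discriminant in Kedlaya~\cite{Ked12}, I would realise $S_5$ as the Galois group of the splitting field of a quintic $f\in\OO_k[x]$ whose discriminant is squarefree (away from a finite harmless set). For such an $f$, at every ramified prime $\mathfrak{p}$ the inertia group is generated by a transposition in $S_5$, hence has order $2$; the decomposition group $D_\mathfrak{p}$ is then a cyclic-by-$\ZZ/2\ZZ$ solvable group, generated by the inertia and a Frobenius lift, and in particular every $D_\mathfrak{p}$ is contained in the normaliser of a subgroup of order $\le 2$ — concretely $D_\mathfrak{p}$ is dihedral or cyclic, of order $2m$ with $m$ the residue degree. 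Such a group has order $20$ only if it is dihedral of order $20$ or cyclic of order $20$, and neither of these occurs: $S_5$ contains no element of order $20$ (orders of elements of $S_5$ are $1,2,3,4,5,6$), and $D_{10}\le S_5$ would require a $5$-cycle centralised-up-to-inversion by a transposition, which does not happen (a transposition does not normalise a group generated by a $5$-cycle). Thus at ramified primes $D_\mathfrak{p}$ has order $2m$ with $m\in\{1,2,3,4,5,6\}$ realised by an actual element order in $S_5$, so $20\nmid\#D_\mathfrak{p}$. At unramified primes $D_\mathfrak{p}$ is cyclic, again of order an element order of $S_5$, hence $\ne 20$. Therefore no decomposition group is isomorphic to $\ZZ/5\ZZ\rtimes\ZZ/4\ZZ$, and $K$ is as required.

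I would also need to be a little careful about the archimedean and the finitely many wildly ramified primes (those dividing $2$, $5$, and the leading coefficient), since there squarefreeness of the discriminant can fail; but Kedlaya's construction allows prescribing the local behaviour at any fixed finite set of places, so one simply imposes there a totally split (or otherwise small cyclic) decomposition group, which certainly is not of order $20$. Assembling: choose $f$ by~\cite{Ked12} with $\Gal$ of its splitting field equal to $S_5$, with squarefree discriminant outside $S=\{\mathfrak{p}\mid 2\cdot5\cdot\mathrm{lc}(f)\}\cup\{\text{archimedean}\}$, and with prescribed small decomposition at the places in $S$; then $K$ is the splitting field of $f$.

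\textbf{Main obstacle.} The delicate point is the global realisation with simultaneous control of local data: namely that one can both force $\Gal=S_5$ (not a proper subgroup) \emph{and} guarantee the tame, transposition-generated inertia at all remaining ramified primes. This is exactly what Kedlaya's squarefree-discriminant construction provides, so the proof reduces to quoting \cite{Ked12} correctly; the remaining group-theoretic verification that no decomposition group can then have order $20$ is routine and can be done by hand from the list of element orders and subgroup structure of $S_5$.
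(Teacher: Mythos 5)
Your proposal follows essentially the same route as the paper: both use Kedlaya's squarefree-discriminant quintics to force every nontrivial inertia group to have order $2$, so that each decomposition group is either cyclic (hence not $\ZZ/5\ZZ\rtimes\ZZ/4\ZZ$, which is non-cyclic) or contains a normal subgroup of order $2$ (which $\ZZ/5\ZZ\rtimes\ZZ/4\ZZ$ does not) --- the latter being the paper's cleaner phrasing of your order-$2m$ case analysis; note also that your worry about wildly ramified primes is vacuous, since a squarefree discriminant already rules out wild ramification. The one place you diverge, and should be more careful, is the passage to a general number field $k$: you invoke Kedlaya directly over $\OO_k$ with prescribed local conditions, whereas the cited result concerns monic integer polynomials over $\QQ$; the paper instead produces a quintic over $\QQ$ whose discriminant is coprime to $\Delta_{k/\QQ}$ (via \cite[Cor.~1.3]{Ked12}) and takes the compositum of its splitting field with $k$, using linear disjointness to keep the Galois group equal to $S_5$ and to preserve the unramifiedness over the quadratic subfield.
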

\begin{proof}
	We shall prove the result by constructing suitably infinitely 
	many such fields over $\QQ$ using \cite{Ked12}, then take a compositum with $k$.
	
	Let $F/\QQ$ be a quintic field with square-free discriminant,
	whose Galois closure $K$ has Galois group $S_5$. If $N$ denotes the unique quadratic subfield of $K$,
	then the extension $K/N$ is unramified (see  \cite{Ked12}).
	It follows that if $\mathfrak{p}$ is a ramified prime ideal of $K/\QQ$, then 
	$\mathfrak{p}$ has ramification index $2$. Hence the decomposition group of
	$\mathfrak{p}$ has a normal subgroup of order $2$. However $\ZZ/5\ZZ \rtimes \ZZ/4\ZZ$
	contains no such subgroup, thus $K$ satisfies the conditions of the lemma.
	
	We now handle the case of a general number field $k$. 
	Choose a quintic field $F/\QQ$ as above such that the discriminant $\Delta_{F/\QQ}$ 
	is coprime to $\Delta_{k/\QQ}$. Such fields exist by \cite[Cor.~1.3]{Ked12}.
	Let $L$ be the Galois closure of $F$ and let $N$ be the unique
	quadratic subfield of $L$.	On embedding $k$ and $L$ in a common algebraic closure, we 
	obtain the following commutative diagram of field extensions
	$$
	\xymatrix{k \ar[r]&	Nk  \ar[r]& Lk\\
	\QQ \ar[u] \ar[r] & N  \ar[r] \ar[u]& L . \ar[u]} 
	$$
	In order to prove the lemma with $K=Lk$, we may apply a similar argument as above 
	to reduce to showing that 
	\begin{enumerate}
		\item $k \subset Lk$ is an $S_5$-extension. \label{item:S_5}
		\item $Nk \subset Lk$ is unramified. \label{item:unramified}
	\end{enumerate}
	The assumption $(\Delta_{F/\QQ},\Delta_{k/\QQ})=1$ implies \cite[Thm.~4.9]{Nar04} that
	$L$ and $k$ are linearly disjoint over $\QQ$. Here \eqref{item:S_5}
	is then standard \cite[Thm.~VI.1.12]{Lan02}.
	Also as $N \subset L$ is unramified, \eqref{item:unramified} follows from 
	\cite[Prop.~7.2]{Neu99}, as required.
\end{proof}

Therefore choosing $K/k$ as in Lemma \ref{lem:S_5} and using Lemma \ref{lem:decomposition},
we see that the surface $S$ also admits a line over the completion of $k$ at every place which is ramified in $K$.
Hence we obtain the required counter-example over $k$.

As an explicit choice over $\QQ$, take $K$ to be the splitting field
of the polynomial
$$f(x)=x^5-x^4-5x^3+5x^2+2x-1.$$
This polynomial was taken from \cite{KM01}.
The discriminant of the field defined by $f$ is 
equal to $101833$, which is prime, and the Galois group
of $K$ is $S_5$.
As the discriminant is square-free,
the proof of Lemma \ref{lem:S_5} shows that it satisfies
the required properties (in fact the 
decomposition group of each ramified prime ideal
of $K$ is isomorphic to $\ZZ/2\ZZ$ here).

\subsubsection{An explicit counter-example} \label{sec:explicit}
We now explain how we were able to write down the equation for the 
counter-example \eqref{eqn:cubic} given in the introduction. To do this, we shall use the constructions
given in \S \ref{sec:constructions} and \S \ref{sec:Z/5ZxZ/4Z}.

Recall that we choose a point in $\PP^2$ defined over $\QQ(\sqrt[5]{101})$ and a point defined over
$\QQ(\sqrt{5})$ such that the associated orbits form seven points in general position,
then blow them up to obtain an auxiliary del Pezzo surface $S'$ of degree $2$. The surface
$S'$ contains two exceptional curves that are $\QQ$-rational, and blowing-down
one of them we find a cubic surface $S$ with the desired properties.

To do this algorithmically, we determine three linearly independent cubic forms
$c_0, c_1, c_2$ on $\PP^2$ that vanish at the seven points. These define the 
anticanonical map $S'\to\PP^2$. We choose $c_0$
to be the product of a linear form vanishing on the size-two orbit and a quadratic
form vanishing on the size-five orbit.
We also determine a sextic form $s$ which admits a double point at each of the seven blow-up points,
and is not a linear combination of products of the cubic forms found. 
We then calculate the unique relation of the type
$$s^2 + f_2(c_0,c_1,c_2)s + f_4(c_0,c_1,c_2) = 0$$
between these forms. Here $f_2$ denotes a ternary quadratic form, while
$f_4$ denotes a ternary quartic form.

Up to now, these computations are pure linear algebra. They lead to a del Pezzo surface 
of degree $2$ with the explicit equation
$$w^2 + f_2(x_0,x_1,x_2)w + f_4(x_0,x_1,x_2) = 0 \quad \subset \PP(1,1,1,2).$$
By our choice of $c_0$, the pull-back of the line $x_0=0$, via the natural map
$S' \to \PP^2$, decomposes into a union of two lines on $S'$.
Thus a linear transformation yields an equation of the form
$$w^2 + f'_2(x_0,x_1,x_2)w + x_0 \cdot f_3(x_0,x_1,x_2) = 0 ,$$
where $f'_2$ is a ternary quadratic form and $f_3$ is a ternary cubic form.
It is now classically known how to explicitly contract one the exceptional curves lying over the line
$x_0 = 0$. As was shown by C. F. Geiser \cite[\S III]{Gei69} already in $1869$, the result is the 
cubic surface $S$ given by the equation
$$x_0 \cdot w^2 + f'_2(x_0,x_1,x_2)w + f_3(x_0,x_1,x_2) = 0 \quad \subset \PP^3 .$$
This is the equation of the required counter-example.
To obtain small coefficients as in \eqref{eqn:cubic}, one may apply
Kollar's minimisation algorithm \cite[Prop.~6.4.2]{Kol97}.

\begin{remark}
	i) Explicit examples for the cases treated in \S \ref{sec:D_5} and \S \ref{sec:S_5}
	may be obtained in a similar manner. 

	\noindent ii) As we have seen, surfaces with orbit type $[2,5,5,5,10]$ or $[2,5,10,10]$ 
	have a Galois invariant double-six. Thus, the methods developed in \cite{EJ10} apply 
	and lead to explicit equations for each of the three groups over $\QQ$.
	Our approach here is simpler however, since it is more direct and easier to 
	implement (for example, we do not need to perform any explicit Galois descent, as in \cite{EJ10}).
\end{remark}

\subsection{Degree $2$} \label{sec:DP2}
\subsubsection{Results of computer experiments}
Here the group $\Aut(G_2)= W(\mathbf{E}_7)$ has order $2903040$.
Our results are summarised in Table \ref{tab:DP2}.

\begin{table}[ht]
 \centering
 \begin{tabular}{c|c|c|c|c}
  subgroups & transitive & cyclic & fixed point & satisfy Prop.~\ref{prop:criterion} \\
  \hline
  $8074$&$32$&$60$&$350$&$60$
 \end{tabular}
 \caption{Results of experiments for del Pezzo surfaces of degree $2$}
 \label{tab:DP2}
\end{table}
Here a much wider range of behaviour occurs than in higher degrees.
We therefore construct counter-examples in some special
cases to illustrate this range, including some surfaces
which are minimal.

\begin{remark}
	Of the $60$ conjugacy classes of subgroups which arise, exactly $2$ 
	are non-solvable, being isomorphic to $A_5$ and $A_5 \times \ZZ/2\ZZ$, respectively.
	Moreover surfaces with these Galois actions are rational, being blow-ups of $\PP^1 \times \PP^1$
	in a closed point of degree six. It seems likely that the method used for cubic surfaces
	(\S \ref{sec:constructions} and Lemma \ref{lem:S_5})
	can be adapted to construct counter-examples in such cases.
	Also, of these $60$ conjugacy classes of subgroups, exactly $30$ correspond
	to minimal surfaces. We shall not pursue all these additional cases in this paper.
\end{remark}

\subsubsection{Some rational counter-examples}

\begin{lemma} \label{lem:DP2_blow-up}
	Let $P,Q$ be closed points of $\PP^2$ of degrees $3$ and $4$,
	respectively, such that $P \sqcup Q$ lies in general position. 
	Then $\Bl_{P \sqcup Q} \PP^2$ contains no line.
\end{lemma}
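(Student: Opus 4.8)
The plan is to read off the $56$ lines of $S:=\Bl_{P\sqcup Q}\PP^2$ explicitly and to show that the Galois group of the splitting field has no fixed point among them. Since $P$ and $Q$ have degrees $3$ and $4$, the surface $S$ has degree $9-7=2$, and over $\bar k$ it is the blow-up of $\PP^2$ in the seven geometric points $p_1,p_2,p_3$ lying over $P$ and $p_4,\dots,p_7$ lying over $Q$. As $2\le 7$, the lines on $S$ are precisely the $(-1)$-curves, and by the classification recalled in \S\ref{sec:classification} these are: the seven exceptional curves $E_i$ over $p_i$; the strict transforms $L_{ij}$ of the $21$ lines through pairs $p_i,p_j$; the strict transforms $C_{ij}$ of the $21$ conics through the five points other than $p_i,p_j$; and the strict transforms $D_i$ of the seven nodal cubics through all seven points with a double point at $p_i$. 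This accounts for all $7+21+21+7=56$ lines predicted by Proposition~\ref{prop:Hilbert}. Writing $K$ for the splitting field of $S$ and $\Gamma=\Gal(K/k)$, the dictionary of \S\ref{sec:0-dim} shows that $S$ contains a line over $k$ if and only if $\Gamma$ acts with a fixed point on this $56$-element set.

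Next I would record how $\Gamma$ acts. It permutes $\{p_1,\dots,p_7\}$ preserving the partition into the block $B_P=\{1,2,3\}$ of points over $P$ and the block $B_Q=\{4,5,6,7\}$ of points over $Q$, and because $P$ and $Q$ are \emph{closed} points of degrees $3$ and $4$, the induced action is transitive on $B_P$ and transitive on $B_Q$. On the level of lines this action is transparent: $\sigma\in\Gamma$ sends $E_i\mapsto E_{\sigma(i)}$ and $D_i\mapsto D_{\sigma(i)}$, while $L_{ij}$ and $C_{ij}$ are both indexed by the $2$-subset $\{i,j\}$ (for $C_{ij}$ via its complementary $5$-subset) and are permuted accordingly. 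Hence a $\Gamma$-fixed line would force a $\Gamma$-stable subset of $\{1,\dots,7\}$ of cardinality $1$ (for some $E_i$ or $D_i$) or of cardinality $2$ (for some $L_{ij}$ or $C_{ij}$).

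Finally I would rule this out with an elementary observation: a transitive permutation group on a set of size $\ge 2$ stabilizes no subset other than the empty set and the full set, since any orbit must lie inside a stabilized subset or inside its complement. If $T\subseteq\{1,\dots,7\}$ is $\Gamma$-stable, then so are $T\cap B_P$ and $T\cap B_Q$; transitivity on each block forces each of these to be empty or the whole block, so $T\in\{\emptyset,B_P,B_Q,\{1,\dots,7\}\}$. None of these has cardinality $1$ or $2$, so $\Gamma$ fixes no line, and therefore $S$ contains no line over $k$.

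There is no serious obstacle here: the statement reduces to a finite combinatorial check once the lines are identified. The only points that require care are the correct matching of the geometric curves to their combinatorial labels via \S\ref{sec:classification} (in particular, noticing that $L_{ij}$ and $C_{ij}$ are both governed by $2$-subsets), and the observation that the residue degrees $3$ and $4$ yield \emph{transitivity}, not merely nontriviality, of $\Gamma$ on each block.
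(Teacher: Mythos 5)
Your proof is correct and follows essentially the same route as the paper: both arguments enumerate the $56$ lines via the classification of \S\ref{sec:classification} and observe that a Galois-invariant line would require a stable subset of $\{p_1,\dots,p_7\}$ of size $1$, $2$ or $5$, which is impossible since the orbits have sizes $3$ and $4$. Your version merely spells out the combinatorial bookkeeping (the labelling of the $L_{ij}$, $C_{ij}$, $D_i$ and the transitivity on each block) in more detail than the paper does.
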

\begin{proof}
	It suffices to show that none of the curves in $\PP^2$ described in \S \ref{sec:classification}
	is Galois invariant.	
	As there is no orbit of length $2$ or $5$, no line nor conic is fixed by the Galois action.
	No cubic curve passing through the seven points 
	which admits a double point at exactly one of them is Galois invariant,
	as there is no rational point being blown-up.
	This completes the proof.
\end{proof}

\begin{lemma} \label{lem:DP2_counter-example}
	Let $P,Q$ be closed points of $\PP^2$ of degrees $3$ and $4$,
	respectively, such that $P \sqcup Q$ lies in general position. Assume
	that the splitting field $K$ of $Q$ is a $V_4$- or $A_4$-extension of $k$
	which satisfies the conditions of Lemma \ref{lem:solvable}.
	Then $\Bl_{P \sqcup Q} \PP^2$ is a del Pezzo surface of degree $2$
	which is a counter-example to the Hasse principle for lines.
\end{lemma}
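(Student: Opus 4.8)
The plan is to realise $\mathcal{L}(S)$, for $S := \Bl_{P \sqcup Q} \PP^2$, as a finite \'etale scheme containing one of the degree-$5$ counter-examples already built in \S\ref{sec:V_4} and \S\ref{sec:A_4}, and to transfer local solubility from the latter. First, since $P \sqcup Q$ consists of $3+4 = 7$ closed points in general position, the classification recalled in \S\ref{sec:classification} shows that $S$ is a del Pezzo surface of degree $9-7 = 2$, and Lemma~\ref{lem:DP2_blow-up} gives $\mathcal{L}(S)(k) = \emptyset$. It therefore remains to show that $\mathcal{L}(S)$ is soluble at every place of $k$, and for this I would exhibit a Galois-stable set of ten lines whose associated \'etale scheme is exactly one of the counter-examples of \S\ref{sec:V_4}--\S\ref{sec:A_4}.

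Write $p_1, p_2, p_3$ and $q_1, \dots, q_4$ for the geometric points underlying $P$ and $Q$. Among the $56$ lines of $S$ listed in \S\ref{sec:classification} I would single out
\[
  T = \{E_{q_1}, \dots, E_{q_4}\} \ \cup\ \{\, C_{ij} : 1 \le i < j \le 4 \,\},
\]
where $E_{q_m}$ is the exceptional curve over $q_m$ and $C_{ij}$ is the strict transform of the unique conic through the five points $p_1, p_2, p_3, q_i, q_j$; each $C_{ij}$ is indeed a line, being a conic through five of the seven points. I would then check two things. First, $T$ is stable under $G_k$ and the action on $T$ factors through $\Gamma := \Gal(K/k)$: the $E_{q_m}$ are defined over $K$ and permuted as the points of $Q$, while each $C_{ij}$ contains all of $P$, so an element of $G_k$ sends $C_{ij}$ to $C_{i'j'}$ with $\{q_{i'}, q_{j'}\}$ the image of $\{q_i, q_j\}$, hence acts trivially on the $C_{ij}$ once it fixes $K$. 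Second, it follows that $T$, as a $\Gamma$-set, is the disjoint union of the standard action of $\Gamma$ on the four points of $Q$ with its action on the six unordered pairs of those points; this is precisely the $\Gamma$-set of lines on the degree-$5$ del Pezzo surface $S'$ obtained by blowing up $\PP^2$ in one closed point of degree $4$ with splitting field $K$, namely the surface of \S\ref{sec:V_4} when $\Gamma \cong V_4$ and of \S\ref{sec:A_4} when $\Gamma \cong A_4$. By the dictionary of \S\ref{sec:0-dim}, the open-and-closed subscheme $X_T \subseteq \mathcal{L}(S)$ cut out by $T$ is therefore isomorphic to $\mathcal{L}(S')$. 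Since $K$ satisfies the conditions of Lemma~\ref{lem:solvable}, the arguments of \S\ref{sec:V_4} resp.\ \S\ref{sec:A_4} show $\mathcal{L}(S')$ is a counter-example to the Hasse principle, so it is soluble at every place of $k$; hence so are $X_T$ and, a fortiori, $\mathcal{L}(S)$. Combined with $\mathcal{L}(S)(k) = \emptyset$ this shows $S$ fails the Hasse principle for lines.

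The step I expect to be the main obstacle is the verification in the second paragraph: using the explicit list of $({-1})$-curves in \S\ref{sec:classification} one must confirm that the ten curves in $T$ are genuinely distinct lines, that $T$ is $G_k$-stable with exactly the claimed action, and that keeping the four $E_{q_m}$ in $T$ — harmless in the $V_4$ case but essential in the $A_4$ case, where the six conics alone fail to be fixed by a cyclic subgroup of order $3$ — reproduces precisely the degree-$5$ configuration. Everything else is the degree count and a citation of the already-constructed degree-$5$ counter-examples.
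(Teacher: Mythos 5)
Your argument is correct and takes essentially the same route as the paper: the paper's proof simply observes that $S$ is the blow-up in $P$ of the degree-$5$ counter-example $\Bl_Q\PP^2$ of \S\ref{sec:V_4}/\S\ref{sec:A_4}, so that the ten lines of that surface (via their strict transforms, which remain lines by general position) give an open-and-closed subscheme of $\mathcal{L}(S)$ isomorphic to $\mathcal{L}(\Bl_Q\PP^2)$, which is everywhere locally soluble, and this combines with Lemma~\ref{lem:DP2_blow-up} exactly as you say. Your only (harmless) deviation is to realise the same ten-element $\Gamma$-set inside $\mathcal{L}(S)$ using the exceptional curves over $Q$ together with the conics through $P$ and pairs of points of $Q$, rather than together with the strict transforms of the lines through pairs of points of $Q$.
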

\begin{proof}
	It follows from Lemma \ref{lem:DP2_blow-up} that $S=\Bl_{P \sqcup Q} \PP^2$ contains no line.
	However $S$ is the blow-up in a closed point of degree $3$
	of a del Pezzo surface of degree $5$ which is a counter-example 
	to the Hasse principle for lines (see \S \ref{sec:V_4} and \S \ref{sec:A_4}).
	In particular $S$ itself is a counter-example to the Hasse principle for lines.
\end{proof}

\begin{remark}
	One can check that the construction given in Lemma \ref{lem:DP2_counter-example}
	yields $11$ out of the $60$ subgroups of $W(\mathbf{E}_7)$ which satisfy 
	Proposition \ref{prop:criterion}. Generically we obtain surfaces with 
	orbit type
	$$[3,3,3,3,4,4,6,6,12,12],$$
	and all other surfaces thus obtained have refinements of this orbit type.
\end{remark}

\subsubsection{A minimal counter-example}
We now give a minimal surface which is a counter-example to the Hasse
principle for lines. We do this by constructing del Pezzo surfaces
of degree $2$ equipped with a conic bundle structure, using the methods of 
\cite[\S 5]{BMS14}. Consider a conic bundle surface of the shape
$$f(t) x^2 + g(t)y^2 + h(t) z^2=0 \quad \subset \mathbb{A}^1 \times \PP^2,$$
where 
$$ f(t) = a(t-13)(2-t), \quad g(t) = b(t + 14)(3-t), \quad h(t) = (t+2)(t-11),$$
and $a,b \in k^*$. By \cite[Prop.~5.2]{BMS14}, the closure of this in $\PP^1 \times \PP^2$
is a del Pezzo surface $S$ of degree $2$.
The natural projection realises $S$ as a conic bundle over $\PP^1$,
with exactly $6$ singular fibres. If $a,b$ and $ab$ are not squares in $k$,
then one checks that each of these singular fibres
is non-split, and that exactly $2$ fibres become split over each of one of the field extensions
$$k(\sqrt{a}), k(\sqrt{b}), k(\sqrt{ab}).$$
Thus this conic bundle is relatively
minimal, hence minimal by a result of Iskovskih \cite[Thm.~4]{Isk79}.
In particular the surface $S$ contains no line.
Another result of Iskovskih implies that such surfaces are non-rational
\cite[Cor.~1.7]{Isk70}. If we choose $a,b$ as in Lemma \ref{lem:biquadratic},
then for each place $v$ of $k$, at least one of the singular fibres
splits into a union of lines. 
In particular, the surface contains a line over every completion
of $k$, thus yields a counter-example to the Hasse principle for lines.
Such surfaces are split over $k(\sqrt{a},\sqrt{b})$, and 
have orbit type $$[2,2,2,2,2,2,2,2,2,2,2,2,4,4,4,4,4,4,4,4].$$

\subsection{Degree $1$} \label{sec:DP1}
\subsubsection{Results of computer experiments}
Here the group $\Aut(G_1)= W(\mathbf{E}_8)$ has order $696729600$.
Our results are summarised in Table \ref{tab:DP1}.

\begin{table}[hbt]
 \centering
 \begin{tabular}{c|c|c|c|c}
  subgroups & transitive & cyclic & fixed point & satisfy Prop.~\ref{prop:criterion} \\
  \hline
  $62092$&$60$&$112$&$7735$&$8742$
 \end{tabular}
 \caption{Results of experiments for del Pezzo surfaces of degree $1$}
 \label{tab:DP1}
\end{table}

The number of possible subgroups is so large that it is not feasible with
current technology and methods to consider all cases. Therefore, as before,
we simply construct counter-examples in some special cases.
We remark that out of the $8742$ conjugacy classes of subgroups
of $W(\mathbf{E}_8)$ which satisfy the conditions of Proposition~\ref{prop:criterion},
exactly $7775$ are solvable.

\subsubsection{Some rational counter-examples}

\begin{lemma} \label{lem:DP1_blow-up}
	Let $P,Q$ be closed points of $\PP^2$ of degree $4$
	such that $P \sqcup Q$ lies in general position. 
	Then $\Bl_{P \sqcup Q} \PP^2$ contains no line.
\end{lemma}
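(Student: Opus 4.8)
The plan is to mimic the argument used for Lemma \ref{lem:DP2_blow-up}: since $S = \Bl_{P \sqcup Q} \PP^2$ is a del Pezzo surface of degree $1$, obtained by blowing up eight points (the four geometric points of $P$ together with the four geometric points of $Q$), its lines correspond, via the classification recalled in \S \ref{sec:classification}, to six explicit types of plane curves — the exceptional curves above the blown-up points, lines through two of the points, conics through five, cubics with a node through seven, quartics with three nodes through all eight, quintics with six nodes through all eight, and sextics with a triple point at one and nodes at the other seven. To show that $S$ contains no line over $k$, it suffices to check that none of these curves is Galois-invariant; equivalently, that no Galois orbit on the eight geometric points has a size compatible with one of these configurations.

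First I would fix notation: let $\Gamma$ be the absolute Galois group of $k$, which acts on the eight points with two orbits, one of size $4$ given by the geometric points of $P$ and one of size $4$ given by those of $Q$ (here I use that $P$ and $Q$ are closed points of degree $4$, so $\Gamma$ acts transitively on each set of four). Then I would run through the six types in turn. A single exceptional curve is Galois-invariant only if the corresponding point is $k$-rational, which is impossible since neither orbit has size $1$. A line through two points is invariant only if those two points form a Galois orbit, i.e. there is an orbit of size $2$ — there is none. A conic through five points requires a Galois-stable set of five among the eight, hence an orbit structure summing to $5$ using only parts of size $4$ — impossible. The cubic-with-a-node case requires a Galois-stable set of seven points with one distinguished (the node); the distinguished point must itself be fixed by $\Gamma$, so again we would need a rational point among the eight — impossible. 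For the quartic (three distinguished points among all eight), quintic (six distinguished among all eight), and sextic (one distinguished with the rest doubly distinguished) cases, the distinguished subsets must be Galois-stable, so their sizes — $3$, $6$, and $1$ respectively — must be expressible as sums of the orbit sizes $4,4$; none of $3,6,1$ is such a sum, so none of these curves is Galois-invariant either.

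Since no curve on the list is defined over $k$, the surface $S$ contains no line over $k$, which proves the lemma. The argument is entirely combinatorial once the classification of \S \ref{sec:classification} is in hand, so there is no real obstacle; the only point requiring a little care is the bookkeeping for the quartic, quintic and sextic types, where one must correctly identify which subset of the eight points is forced to be Galois-stable (namely the set of points at which the curve is singular, together with — for the sextic — the triple point), and then observe that the relevant cardinalities $1,3,6$ cannot arise from unions of the two orbits of size $4$.
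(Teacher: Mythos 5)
Your proof is correct and takes essentially the same approach as the paper: the paper's own proof simply states that ``an inspection of the curves in $\PP^2$ given in \S \ref{sec:classification} reveals that none of them is Galois invariant,'' and your write-up carries out exactly that inspection, using the fact that the only Galois orbits on the eight geometric points have size $4$, so no Galois-stable subset of size $1$, $2$, $3$, $5$, $6$ or $7$ can exist. The case analysis and the identification of which subset must be Galois-stable for each curve type are both correct.
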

\begin{proof}
	As in Lemma \ref{lem:DP2_blow-up}, an inspection of 
	the curves in $\PP^2$ given in \S \ref{sec:classification} reveals
	that none of them is Galois invariant, hence none are defined over $k$.
\end{proof}

\begin{lemma} \label{lem:DP1_counter-example}
	Let $P,Q$ be closed points of $\PP^2$ both of degree $4$,
	such that $P \sqcup Q$ lies in general position. Assume
	that the splitting field $K$ of $Q$ is a $V_4$- or $A_4$-extension of $k$
	which satisfies the conditions of Lemma \ref{lem:solvable}.
	Then $\Bl_{P \sqcup Q} \PP^2$ is a del Pezzo surface of degree $1$
	which is a counter-example to the Hasse principle for lines.
\end{lemma}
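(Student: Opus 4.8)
The plan is to imitate the structure of the proof of Lemma~\ref{lem:DP2_counter-example}, simply transporting it from degree~$2$ to degree~$1$. First I would invoke Lemma~\ref{lem:DP1_blow-up}, which tells us that $S = \Bl_{P \sqcup Q} \PP^2$ contains no line over $k$; this handles the global non-solubility at a single stroke, and is exactly the analogue of the appeal to Lemma~\ref{lem:DP2_blow-up} in the degree~$2$ case. Note that $S$ is a del Pezzo surface of degree $9 - 4 - 4 = 1$, since $P \sqcup Q$ is in general position.

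Next I would observe that $S$ is obtained from a del Pezzo surface $S'$ of degree $5$ by blowing up the closed point $P$ of degree $4$: indeed, blowing up $Q$ (a degree-$4$ point) in $\PP^2$ yields the degree-$5$ del Pezzo surface $S' = \Bl_Q \PP^2$, and then $S = \Bl_P S'$. By hypothesis the splitting field $K$ of $Q$ is a $V_4$- or $A_4$-extension satisfying Lemma~\ref{lem:solvable}, so by the constructions in \S\ref{sec:V_4} and \S\ref{sec:A_4} the surface $S'$ is a counter-example to the Hasse principle for lines: it has no line over $k$, but has a line over every completion $k_v$.

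The key remaining point is that blowing up a closed point cannot destroy local solubility of the Hilbert scheme of lines: every line on $S'$ pulls back to (the strict transform of) a line on $S$, so a $k_v$-line on $S'$ gives a $k_v$-line on $S$. Hence $S$ has a line over every completion of $k$. Combined with the first step, $S$ is a counter-example to the Hasse principle for lines, as claimed. The only thing to check carefully is that $\Bl_{P \sqcup Q} \PP^2$ really is a del Pezzo surface, i.e.\ that $P \sqcup Q$ being in general position (as in the hypothesis) suffices — but this is immediate from the classification recalled in \S\ref{sec:classification}. I expect no genuine obstacle here: the proof is essentially a one-line reduction, and the substance was already done in Lemmas~\ref{lem:DP1_blow-up} and~\ref{lem:solvable} and in \S\ref{sec:V_4}, \S\ref{sec:A_4}.

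\begin{proof}
	It follows from Lemma \ref{lem:DP1_blow-up} that $S=\Bl_{P \sqcup Q} \PP^2$ contains no line.
	However $S$ is the blow-up in the closed point $P$ of degree $4$
	of the del Pezzo surface $S' = \Bl_Q \PP^2$ of degree $5$, which is a counter-example
	to the Hasse principle for lines (see \S \ref{sec:V_4} and \S \ref{sec:A_4}).
	Since every line on $S'$ pulls back to a line on $S$, the surface $S$
	contains a line over every completion of $k$.
	In particular $S$ is a counter-example to the Hasse principle for lines.
\end{proof}
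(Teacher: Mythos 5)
Your proof is correct and follows essentially the same route as the paper's: no line over $k$ by Lemma~\ref{lem:DP1_blow-up}, while local lines come from viewing $S$ as a blow-up of the degree-$5$ counter-example $\Bl_Q\PP^2$ of \S\ref{sec:V_4}--\S\ref{sec:A_4}. Your added remark that lines on $S'$ survive the blow-up is a sensible (and, given general position, correct) elaboration of a step the paper leaves implicit.
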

\begin{proof}
	As in the proof of Lemma \ref{lem:DP2_counter-example},
	by Lemma \ref{lem:DP1_blow-up} we know that $S$ contains no line,
	yet is a blow-up of a del Pezzo surface of degree $5$
	which is a counter-example to the Hasse principle for lines.
	The result follows.
\end{proof}

\begin{remark}
	The construction given in Lemma \ref{lem:DP1_counter-example}
	yields $26$ out of the $8742$ subgroups of $W(\mathbf{E}_8)$ which satisfy 
	Proposition \ref{prop:criterion}. Generically this yields surfaces with orbit
	type
	$$[4,4,4,4,4,4,4,4,6,6,6,6,12,12,16,16,16,16,24,24,24,24],$$
	and all other surfaces thus obtained have refinements of this orbit type.
\end{remark}

\subsubsection{A minimal counter-example}
We now construct some minimal counter-examples using conic bundles.
We consider conic bundles of the shape
$$z^2 = f(t)x^2 + g(t) y^2 \quad \subset \mathbb{A}^1 \times \PP^2,$$
where $f,g \in k[t]$ and $\deg f = \deg g = 4.$
This is a variant of the family considered in \cite[\S 5]{BMS14}.
Given $e_1,e_2,e_3,e_4,\lambda,a,b \in k^*$, we take $f$ to be
$$f(t)=(t-e_1)(t-e_2)(t-e_3)(t-e_4),$$
and also take 
$$g(t) = \lambda f(t) + \prod_{i\neq1}\frac{t-e_i}{e_1-e_i}
+ a \prod_{i\neq2}\frac{t-e_i}{e_2-e_i}
+ b \prod_{i\neq3}\frac{t-e_i}{e_3-e_i}
+ ab \prod_{i\neq4}\frac{t-e_i}{e_4-e_i}.$$ 
This has been defined so that it satisfies
$$g(e_1) = 1, \quad g(e_2) = a, \quad g(e_3) = b, \quad g(e_4) = ab.$$

With these choices, the fibre over $t=e_1$ is split. Contracting one of the lines in this fibre, 
a similar method to the proof of \cite[Prop.~5.3]{BMS14} shows that we obtain a surface
with a natural
compactification $S$ which is a del Pezzo surface of degree $1$ with a conic bundle structure,
provided that the $e_i$ and $\lambda$ are chosen sufficiently generally. 
This conic bundle has
$7$ singular fibres over $\bar k$. By construction, three of these fibres occur
over rational points, with splitting fields
$$k(\sqrt{a}), k(\sqrt{b}), k(\sqrt{ab}).$$
Moreover if the $e_i$ and $\lambda$ are chosen sufficiently generally, then 
the conic bundle has a non-split fibre over a closed
point of degree $4$. Thus the surface is relatively minimal,
hence minimal and non-rational by results of Iskovskih (\cite[Thm.~4]{Isk79}
and \cite[Cor.~1.7]{Isk70}). If $a,b$ satisfy the conditions
of Lemma \ref{lem:biquadratic}, we see that for each place $v$ of $k$, at least one of the singular fibres
splits into a union of lines. Thus we obtain the required counter-example to the Hasse principle for lines.

\begin{remark}
	Generically, this construction leads to surfaces with orbit type
	$$[2, 2, 2, 2, 2, 2, 4, 4, 4, 8, 8, 16, 16, 16, 24, 32, 32, 32, 32]$$
	whose Galois group is a solvable group of order $768$, and all other surfaces obtained 
	in this way have refinements of this orbit type.
\end{remark}

This completes the proof of Theorem \ref{thm:main}. \qed

\section{Distribution of counter-examples} \label{sec:thin}
The aim of this section is to prove Theorem \ref{thm:thin_cubic}, together with an analogous
result for del Pezzo surfaces. We begin by giving the 
set-up for our results.

\subsection{The set-up}
We work for now over $\ZZ$, and construct
a Hilbert scheme which parametrises del Pezzo surfaces
of degree $d$.  Define
$$X_d  = 
\begin{cases}
	\PP^d, & \quad  \mbox{if } d \geq 3, \\
	\PP(1,1,1,2), & \quad  \mbox{if } d = 2, \\
	\PP(1,1,2,3), & \quad  \mbox{if } d = 1.
\end{cases}
$$

So that any del Pezzo surface of degree $d$ may be embedded anticanonically
into~$X_d$. We define the scheme $\mathcal{H}_d$ to be the Hilbert scheme over $\ZZ$ which 
parametrises those subschemes of $X_d$
which are anticanonically embedded del Pezzo surfaces of degree $d$. This exists by a 
general result of Grothendieck \cite[Expos\'{e} 221, Thm.~3.1]{FGA}, and commutes with arbitrary
base extensions. Given a $\ZZ$-scheme $A$, we denote by $$\mathcal{H}_{d,A} = \mathcal{H}_d \times_{\Spec \ZZ} A$$
the Hilbert scheme of anticanonically embedded del Pezzo surfaces of degree $d$ over $A$.
We study the basic properties of these schemes in the following lemma.

\begin{lemma} \label{lem:connected}
	Let $1 \leq d \leq 9$ and let $k$ be a field. Then the scheme $\mathcal{H}_{d,k}$ is smooth. When $d \neq 8$, it is geometrically connected.
	When $d=8$, it consists of two connected components, each of which is geometrically connected.
\end{lemma}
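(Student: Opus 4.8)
The plan is to realise $\mathcal{H}_{d,k}$ explicitly as an open subscheme of a projective space (or weighted projective space) and then read off smoothness and connectedness from that description. First I would observe that, by the very definition of $X_d$, an anticanonically embedded del Pezzo surface of degree $d$ in $X_d$ is cut out by a single form: a cubic in $\PP^3$ when $d=3$, the intersection of two quadrics in $\PP^4$ when $d=4$, a quadric in $X_2=\PP(1,1,1,2)$, a sextic in $X_1=\PP(1,1,2,3)$, and (for $d\ge 5$) the relevant linear system on a linearly normal embedding in $\PP^d$. In each case the parameter space of \emph{all} such forms (up to scaling) is a projective space $\PP^N$, or a Grassmannian when the linear system has dimension $>1$ (as for $d=4$, where one needs a pencil of quadrics, giving a $\mathrm{Gr}(2,\cdot)$). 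The key point is then that $\mathcal{H}_{d,k}$ is the locus inside this ambient parameter variety where the corresponding subscheme is smooth of the expected dimension $2$ and has the Hilbert polynomial of a degree-$d$ del Pezzo surface; this locus is open, and one must check it is non-empty and has the claimed connectivity.

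The main steps, in order: (1) identify the ambient parameter variety $V_d$ (a projective space for $d\ne 4$, a Grassmannian for $d=4$) and note it is smooth and geometrically connected; (2) show that $\mathcal{H}_{d,k}$ is an \emph{open} subscheme of $V_d$, by arguing that the conditions ``the subscheme is geometrically integral, smooth, of dimension $2$, and anticanonically embedded as a del Pezzo of degree $d$'' are open conditions — smoothness is open by the usual Jacobian criterion, and one uses flatness/semicontinuity of the Hilbert polynomial, together with Bertini-type or deformation arguments to see that the del Pezzo property is preserved under small deformation (equivalently, the ample anticanonical class is an open condition and the degree is locally constant on a flat family). Since an open subscheme of a smooth scheme is smooth, smoothness of $\mathcal{H}_{d,k}$ follows at once. (3) For connectedness: an open subscheme of an irreducible variety is irreducible, hence connected, once it is non-empty; non-emptiness follows from the existence of smooth del Pezzo surfaces of degree $d$ over $k$ (e.g.\ over the algebraic closure, and these parameter varieties commute with base change). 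This handles all $d\ne 8$.

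For $d=8$ one must be more careful, since the two geometric types $\PP^2$ (via the degree-$2$ Veronese, embedded in $\PP^8$, but this is actually degree $9$ — so for $d=8$ the surfaces are blow-ups of $\PP^2$ at one point, i.e.\ Hirzebruch surfaces $\mathbb{F}_1$, and the twists of $\PP^1\times\PP^1=\mathbb{F}_0$) give two different components. Concretely, $X_8 = \PP^8$ and an anticanonical del Pezzo of degree $8$ is either $\mathbb{F}_1$ or a form of $\PP^1\times\PP^1$; these have non-isomorphic Picard lattices over $\bar k$, so no flat family can connect them, giving (at least) two connected components. One then checks each is itself irreducible over $\bar k$ by the same open-subscheme argument applied to each geometric type separately, so each is geometrically connected. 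I expect the main obstacle to be step (2) — proving rigorously that the del Pezzo condition cuts out an \emph{open} subscheme, i.e.\ that ampleness of $-K$ and the degree are open/locally constant in the family; this is where one genuinely needs the properties of the Hilbert scheme (flatness, semicontinuity) rather than pure linear algebra, and for $d=1,2$ the weighted-projective ambient space adds a minor technical wrinkle that must be dealt with carefully.
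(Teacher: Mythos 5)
Your approach works, and matches the paper's, for $d\le 4$: there $\mathcal{H}_{d,k}$ really is an open subscheme of a projective space of forms (cubics in $\PP^3$, weighted forms in $\PP(1,1,1,2)$ and $\PP(1,1,2,3)$) or, for $d=4$, of the Grassmannian of pencils of quadrics in $\PP^4$, and smoothness and geometric connectedness follow at once. The genuine gap is in the range $d\ge 5$, which your step (1) does not actually cover: an anticanonically embedded del Pezzo surface of degree $d\ge 5$ in $\PP^d$ is \emph{not} a hypersurface or a complete intersection (e.g.\ the quintic del Pezzo in $\PP^5$ is cut out by five quadrics), so there is no projective space or Grassmannian of ``forms'' of which $\mathcal{H}_{d,k}$ is an open subscheme. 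Your step (2) therefore has nothing to be open \emph{in}: the ambient object would have to be the full Hilbert scheme with the given Hilbert polynomial, whose smoothness and connectedness are precisely what is at issue, not something you may assume.

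The paper fills this gap with two different inputs for $d\ge 5$. Smoothness is obtained from deformation theory: by Grothendieck's criterion it suffices to show $H^1(S,N_{S/\PP^d})=0$ for every anticanonically embedded del Pezzo $S\subset\PP^d$, a standard cohomological computation (cited from Coskun). Connectedness is obtained from homogeneity: for $d\in\{5,6,7,9\}$ there is a unique del Pezzo surface of degree $d$ over $\bar k$ up to isomorphism, so any two anticanonical embeddings are projectively equivalent and $\mathcal{H}_{d,\bar k}$ is a single $\PGL_{d+1}$-orbit, hence irreducible; for $d=8$ there are exactly two geometric isomorphism classes ($\mathbb{F}_1$ and forms of $\PP^1\times\PP^1$), each containing a surface defined over $k$, giving exactly two components, each geometrically connected. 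Your observation that the two types in degree $8$ cannot lie in a common component is correct (they have non-isomorphic Picard lattices, or equivalently only one of them carries a $(-1)$-curve), but to show each piece is a \emph{single} connected component you again need the orbit argument, not an open-subscheme argument. You would need to supply both the normal-bundle vanishing and the transitivity of $\PGL_{d+1}$ on embeddings to complete the proof.
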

\begin{proof}
	Both claims are clear when $d\leq 3$, as  $\mathcal{H}_{d,k}$
	is an open subset of some projective space (it parametrises
	the smooth members of a linear system of divisors).
	
	Next consider the case $d=4$. Here any such surface is a complete intersection
	of two quadrics, which is uniquely determined by the pencil of quadrics in $\PP^4$
	which contains it (namely it is the base-locus of such a pencil).
	Hence $\mathcal{H}_{4,k}$ may be identified with an open subset of the 
	Grassmanian of lines in the projective space $\PP(H^0(\PP^4,\mathcal{O}(2)))$, 
	hence is clearly smooth and geometrically connected.
	
	Assume now that $d\geq 5$. In order to prove that $\mathcal{H}_{d,k}$
	is smooth, by \cite[Expos\'{e} 221, Cor.~5.4]{FGA} it suffices to 
	show that $\smash{H^1(S, N_{S/\PP^d})=0}$ for any smooth del Pezzo surface
	$S \subset \PP^d$ of degree $d$, where $\smash{N_{S/\PP^d}}$ denotes the normal bundle
	of $S$ with respect to $\PP^d$. This vanishing is a standard cohomological calculation,
	and is shown, for example, in \cite[Lem.~5.7]{Cos06}.

	We now consider the connected components when $d \geq 5$.
	First suppose that $d \neq 8$. Here there is a unique
	del Pezzo surface of degree $d$ over $\bar{k}$ up to isomorphism.
	Thus any two del Pezzo surfaces of degree $d$
	in $\PP^d$ are projectively equivalent over $\bar{k}$, hence $\mathcal{H}_{d,k}$ is a 
	homogeneous space for $\PGL_{d+1}$, and the result follows.
	When $d=8$, there are exactly two $\smash{\bar{k}}$-isomorphism
	classes of del Pezzo surfaces of degree $8$, and each contains a surface
	defined over $k$. The result then follows from a similar argument to the above.
\end{proof}
We denote the two connected components of $\smash{\mathcal{H}_{8,k}}$ by 
$\smash{\mathcal{H}_{8,k}^1}$ and $\smash{\mathcal{H}_{8,k}^2}$. We choose these such that
$\smash{\mathcal{H}_{8,k}^1}$ parametrises twists of $\smash{\PP^1 \times \PP^1}$.

Next, recall (see \cite[\S 9]{Ser97a} or \cite[\S 3]{Ser08}) that given a variety $X$ over $k$,
a subset $\Omega \subset X(k)$ is said to be \emph{thin} if it is a finite union of 
subsets which are either contained in a proper closed subvariety of $X$, or 
in some $\pi(Y(k))$ where $\pi: Y \to X$ is a generically finite dominant morphism 
of degree exceeding $1$, with $Y$ irreducible. 
Given a disjoint union of varieties $X_1 \sqcup X_2$, we shall
say that a subset $\Omega \subset (X_1 \sqcup X_2)(k)$ is \emph{thin}, if $\Omega \cap X_i(k)$
is thin for each $i=1,2$.

\begin{theorem} \label{thm:distribution}
	Let $k$ be a number field and let $1 \leq d \leq 9$.
	\begin{enumerate}
		\item[(a)] There exists a thin subset $\Omega_d \subset \mathcal{H}_{d,k}(k)$ such that the Hasse principle
			for lines holds for those del Pezzo surfaces corresponding to the points
			of $\mathcal{H}_{d,k}(k) \setminus \Omega_d$.
		\item[(b)]The collection of del Pezzo surfaces
			of degree $d$ in $\mathcal{H}_{8,k}^1(k)$ which fail the Hasse principle
			for lines is Zariski dense in $\mathcal{H}_{8,k}^1$.
		\item[(c)] If $d=5,3,2$ or $1$, then the collection of del Pezzo surfaces
			of degree $d$ in $\mathcal{H}_{d,k}(k)$ which fail the Hasse principle
			for lines is Zariski dense inside $\mathcal{H}_{d,k}$. 
	\end{enumerate}
\end{theorem}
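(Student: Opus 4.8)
The plan is to prove the three parts more or less independently, using the group-theoretic classification from Section \ref{sec:main} together with the explicit constructions given there. For part (a), I would use Proposition \ref{prop:criterion} and Lemma \ref{lem:HP}: a del Pezzo surface $S$ of degree $d \leq 7$ fails the Hasse principle for lines only if the image $\Gamma$ of the absolute Galois group in $\Aut(G_d)$ is one of the finitely many ``bad'' conjugacy classes of subgroups found by the computer search (for $d = 8$ one uses Lemma \ref{lem:C_1xC_2} and Lemma \ref{lem:quadrics} to reduce to the split case, and for $d = 9,7$ there is nothing to do). In particular, for every surface failing the Hasse principle the Galois action on the lines is \emph{imprimitive and intransitive}, so the splitting field is contained in a fixed proper subfield of the generic splitting field of the universal family over $\mathcal{H}_{d,k}$. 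I would then invoke Hilbert irreducibility together with the fact that $\mathcal{H}_{d,k}$ is smooth and geometrically connected (Lemma \ref{lem:connected}), so that the generic surface has the full monodromy group $W(\mathbf{E}_{9-d})$ acting on its lines; the locus where the Galois action is a proper subgroup is then a thin set, being the image of the union of the (finitely many) connected covers of $\mathcal{H}_{d,k}$ corresponding to the bad subgroups. Taking $\Omega_d$ to be this thin set gives (a).

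For part (b), the surfaces in $\mathcal{H}_{8,k}^1$ are twists of $\PP^1 \times \PP^1$, which by Lemma \ref{lem:C_1xC_2} (when $\mathcal{L}(S)$ is reducible) are of the form $C_1 \times C_2$ for conics $C_1, C_2$; the discussion after Lemma \ref{lem:C_1xC_2} produces counter-examples from pairs of conics $C_1, C_2$ with no $k$-point but covering all completions. To get Zariski density, I would vary such pairs of conics in families: fixing a place $v_0$ and letting $C_1, C_2$ range over conics that are everywhere locally soluble except at a controlled set of primes (as in the construction following Lemma \ref{lem:C_1xC_2}, which has enough free parameters), one sees that the resulting $C_1 \times C_2$ sweep out a Zariski dense subset of the space of twists of $\PP^1 \times \PP^1$, since the space of conics over $k$ is already Zariski dense in the relevant parameter space and the product map to $\mathcal{H}_{8,k}^1$ is dominant.

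For part (c), the strategy is to show that the explicit constructions of \S\ref{sec:V_4}, \S\ref{sec:A_4}, \S\ref{sec:D_5}, \S\ref{sec:Z/5ZxZ/4Z}, the conic-bundle constructions of \S\ref{sec:DP2} and \S\ref{sec:DP1}, etc.\ can be carried out in families with enough moduli. Concretely, in each case the counter-example is built by blowing up $\PP^2$ at closed points whose residue fields are prescribed extensions $K/k$ satisfying Lemma \ref{lem:solvable} (or Lemma \ref{lem:S_5}), or by choosing conic-bundle coefficients $a,b$ as in Lemma \ref{lem:biquadratic}; in all of these there remain free parameters — the choice of the actual points in general position with the prescribed splitting field, or the coefficients $e_i, \lambda$ of the conic bundle. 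I would argue that as these parameters vary, the images in $\mathcal{H}_{d,k}$ form a Zariski dense set: one fixes a valid splitting-field datum and observes that the points of $\PP^2$ (resp.\ $(\PP^1 \times \PP^1)$) with that residue field, in general position, form a Zariski dense subset of the relevant symmetric power, and blowing up / contracting is a morphism that is dominant onto a positive-dimensional (in fact, as $d$ decreases, increasingly large) subvariety of $\mathcal{H}_{d,k}$; combined with the freedom to also vary the splitting-field datum itself (there are infinitely many suitable $K$ by Lemma \ref{lem:solvable}, Lemma \ref{lem:biquadratic}, Lemma \ref{lem:S_5}), one gets density. The main obstacle I anticipate is part (a): carefully setting up the Hilbert-irreducibility argument so that ``having a proper Galois image on the lines'' is genuinely thin — this requires knowing that the monodromy of the universal family of degree-$d$ del Pezzo surfaces over $\mathcal{H}_{d,k}$ is the full Weyl group $W(\mathbf{E}_{9-d})$ (a classical fact, but one must cite it and check it applies over a general number field), and then identifying the bad locus with a finite union of images of the connected étale covers cut out by the bad subgroups, so that each piece is either contained in a proper closed subset or is a genuinely multi-sheeted cover.
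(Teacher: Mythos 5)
Your treatment of part (a) is essentially the paper's argument: the only facts needed are that a surface failing the Hasse principle for lines must have reducible $\mathcal{L}(S)$ (Lemma \ref{lem:HP} for $d\leq 7$, Lemma \ref{lem:C_1xC_2} for $d=8$), and that the locus in $\mathcal{H}_{d,k}(k)$ where the fibre of the universal family of lines is reducible is thin. Note that for the latter you do not need the monodromy to be the full Weyl group, only that it acts transitively, i.e.\ that $\mathcal{L}_{d,k}$ is irreducible; this is Proposition \ref{prop:universal}, and it is not merely "classical" for $d=1,2$, where the paper has to quote Ern\'{e}, V\'{a}rilly-Alvarado--Zywina and Shioda. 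Also, for $d=8$ the family of lines is not finite over $\mathcal{H}^1_{8,k}$, so Hilbert irreducibility must be applied to the degree-$2$ \'{e}tale cover coming from the Stein factorisation of $\ell^1_{8,k}$; your sketch omits this step, but it is routine.

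Part (b) as written has a genuine gap. The map from pairs of plane conics to $\mathcal{H}^1_{8,k}$ cannot be dominant: the source has dimension $10$, whereas $\dim\mathcal{H}^1_{8,k}=\dim\PGL_9-\dim\Aut(\PP^1\times\PP^1)=74$. Varying $C_1,C_2$ as abstract conics produces only countably many $k$-isomorphism classes of surfaces; all of the density must come from varying the anticanonical embedding, i.e.\ from the $\PGL_9(k)$-action. Once you invoke that, a single counter-example suffices: since $\mathcal{H}^1_{8,k}$ is a homogeneous space for $\PGL_9$ and $\PGL_9(k)$ is Zariski dense in $\PGL_9$, the $\PGL_9(k)$-orbit of one counter-example is already Zariski dense. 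That is exactly the paper's (one-line) proof, so the fix is easy, but the mechanism you propose is not the one doing the work.

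Part (c) is where you diverge most from the paper, which argues by reduction modulo a well-chosen prime: an approximation lemma (Lemma \ref{lem:approximation}) shows that for infinitely many $\mathfrak{p}$ \emph{every} split surface over $\FF_\mathfrak{p}$ lifts to a counter-example over $k$, and the Lang--Weil estimates then force the counter-examples out of any proper closed subset. Your direct parameter-counting route is plausible, but your own phrasing concedes the crucial point: you claim the blow-up/contraction construction is ``dominant onto a positive-dimensional \dots subvariety of $\mathcal{H}_{d,k}$'', and density inside a proper subvariety is not density in $\mathcal{H}_{d,k}$. To complete your argument you must show that the configuration space (a product of Weil restrictions $\Res_{L/k}\PP^2$, whose $k$-points are indeed Zariski dense), together with the choice of anticanonical embedding, maps \emph{dominantly onto all of} $\mathcal{H}_{d,k}$ --- which is true, because every del Pezzo surface of degree $\leq 7$ over $\bar{k}$ is a blow-up of $\PP^2$ and the Weyl group acts transitively on markings, so the specific configurations used (e.g.\ contracting the line through the degree-$2$ point) still sweep out everything --- and then push density of $k$-points through this dominant map. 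That is exactly the step your proposal leaves open; the paper's Lang--Weil argument avoids having to establish it.
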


This result says that, whilst there are many counter-examples,
they are still in some sense quite rare.

\subsection{The universal family of lines}
Before we begin the proof of Theorem~\ref{thm:distribution},
we study the universal family of lines over the
Hilbert schemes $\mathcal{H}_d$.

The scheme $\mathcal{H}_d$ comes equipped with a universal
family of del Pezzo surfaces $\mathcal{S}_d \to \mathcal{H}_d$,
which naturally lives inside $\mathcal{H}_d \times X_d$.
By a standard argument (see \cite[\S 3.4]{Sch85}), there
exists a universal family of lines $\ell_d: \mathcal{L}_d \to \mathcal{H}_d$,
parametrising the lines in the family $\mathcal{S}_d \to \mathcal{H}_d$.
When $d=8$ and $k$ is a field, we have $\mathcal{H}_{8,k} = \mathcal{H}_{8,k}^1 \sqcup \mathcal{H}_{8,k}^2$
by Lemma \ref{lem:connected}.
In which case we denote by $\smash{\ell_{8,k}^1: \mathcal{L}_{8,k}^1 \to \mathcal{H}_{8,k}^1}$ and 
$\smash{\ell_{8,k}^2: \mathcal{L}_{8,k}^2 \to \mathcal{H}_{8,k}^2}$ the corresponding universal 
families of lines.

\begin{lemma} \label{lem:universal}
	Let $1 \leq d \leq 9$ and let $k$ be a field. Then the 
	map $$\ell_{d,k}:\mathcal{L}_{d,k} \to \mathcal{H}_{d,k},$$
	is smooth and projective. If $d\leq 7$, then it is finite \'{e}tale.
\end{lemma}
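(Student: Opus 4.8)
The plan is to establish projectivity and smoothness separately, and then to deduce the finite \'{e}tale assertion for $d \leq 7$ from smoothness together with Proposition~\ref{prop:Hilbert}. Projectivity is essentially built into the construction: $\mathcal{L}_{d}$ is obtained in \cite[\S 3.4]{Sch85} as a union of connected components of the relative Hilbert scheme of the universal family $\mathcal{S}_d \to \mathcal{H}_d$, and this relative Hilbert scheme is projective over $\mathcal{H}_d$ by \cite[Expos\'{e} 221, Thm.~3.1]{FGA}; passing to the base change over $k$ shows that $\ell_{d,k}$ is projective, hence in particular proper and of finite presentation.

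For smoothness I would argue that $\ell_{d,k}$ is smooth at a point corresponding to a pair $(S,[L])$ as soon as $H^1(L,N_{L/S}) = 0$, where $N_{L/S}$ denotes the normal bundle of the line $L$ inside the surface $S$. This is the relative version of \cite[Expos\'{e} 221, Cor.~5.4]{FGA} already used in Lemma~\ref{lem:connected}: it follows from the infinitesimal criterion for smoothness, once one identifies the obstruction to extending a flat family of lines inside a (varying) family of surfaces with a class in $H^1$ of the normal bundle of the line in the fibre surface. It then remains to compute $N_{L/S}$ in each case. A line is a $(-1)$-curve whenever $d \leq 7$, and also when $d = 8$ and $S_{\bar k} \not\cong \PP^1 \times \PP^1$; in that situation $L \cong \PP^1$ and $N_{L/S} \cong \mathcal{O}_{\PP^1}(L \cdot L) = \mathcal{O}_{\PP^1}(-1)$. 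When $S_{\bar k} \cong \PP^1 \times \PP^1$ a line is a ruling, so $L \cdot L = 0$ and $N_{L/S} \cong \mathcal{O}_{\PP^1}$. When $d = 9$ a line in $\PP^2$ has $N_{L/S} \cong \mathcal{O}_{\PP^1}(1)$. In every case $H^1(\PP^1,N_{L/S}) = 0$, so $\ell_{d,k}$ is smooth; as a sanity check, $h^0(N_{L/S})$ equals $0$, $1$ and $2$ respectively, in agreement with the fibre dimensions recorded in Proposition~\ref{prop:Hilbert}.

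Finally, assume $d \leq 7$. By Proposition~\ref{prop:Hilbert} each fibre of $\ell_{d,k}$ is a finite (indeed finite \'{e}tale) scheme, so $\ell_{d,k}$ is a smooth morphism with finite fibres, hence of relative dimension $0$, hence \'{e}tale. A proper \'{e}tale morphism is finite \'{e}tale (proper plus quasi-finite is finite), so $\ell_{d,k}$ is finite \'{e}tale, which completes the proof.

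The step I expect to require the most care is the reduction in the second paragraph: one has to be precise that the relevant obstruction group is $H^1$ of the normal bundle of the line \emph{in the fibre surface} --- with the ambient surface permitted to move with the base --- and not a normal bundle computed in the total space $\mathcal{S}_{d,k}$. Once this is pinned down the remaining computations are routine. One could instead try to verify directly that $\ell_{d,k}$ is flat with smooth geometric fibres of locally constant dimension, but checking flatness by hand does not appear to simplify matters.
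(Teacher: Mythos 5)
Your proof is correct, but the smoothness step takes a genuinely different route from the paper's. The paper first establishes that $\ell_{d,k}$ is \emph{flat}, by applying \cite[Thm.~III.9.9]{Har77} to each irreducible component of $\mathcal{H}_{d,k}$ --- this is where Lemma~\ref{lem:connected} enters, since that criterion requires an integral base and the fibres have constant Hilbert polynomial by Proposition~\ref{prop:Hilbert} --- and then deduces smoothness from flatness together with the smoothness of the fibres; the rest (projectivity by construction, and quasi-finite $+$ smooth $+$ proper $\Rightarrow$ finite \'{e}tale for $d \leq 7$, citing \cite[Prop.~3.6]{Sch85}) coincides with what you wrote. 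You instead prove smoothness directly via the relative deformation-theoretic criterion $H^1(L, N_{L/S}) = 0$, computing $N_{L/S} \cong \mathcal{O}_{\PP^1}(L \cdot L)$ with $L \cdot L \in \{-1, 0, 1\}$ according to the case. Both routes are valid. Yours makes no use of the reducedness or smoothness of the base $\mathcal{H}_{d,k}$ and yields the fibre dimensions for free from $h^0(N_{L/S})$, at the cost of invoking the relative smoothness criterion for Hilbert schemes --- where, as you rightly flag, the obstruction space must be taken with the normal bundle computed in the fibre surface, and one should note that smoothness may be checked after base change to $\bar{k}$ so that every line is honestly a $\PP^1$. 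The paper's route is softer: it trades the cohomology computation for the constancy of Hilbert polynomials already recorded in Proposition~\ref{prop:Hilbert}.
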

\begin{proof}
	By construction the morphism $\ell_{d,k}$ is projective.
	Since $\mathcal{H}_{d,k}$ is reduced by Lemma \ref{lem:connected},
	we may apply \cite[Thm.~III.9.9]{Har77} to each irreducible component 
	of $\mathcal{H}_{d,k}$ to deduce that $\ell_{d,k}$ is flat.
	Proposition \ref{prop:Hilbert} implies that $\ell_{d,k}$ is smooth
	and quasi-finite when $d \leq 7$.
	The result then follows since a quasi-finite smooth proper
	morphism is finite \'{e}tale (see also \cite[Prop.~3.6]{Sch85}).
\end{proof}

\begin{proposition} \label{prop:universal}
	Let $1 \leq d \leq 9$ and let $k$ be field, with
	$\characteristic k = 0$ or $\characteristic k \gg 1$. If $d\neq 7,8$, 
	then $\mathcal{L}_{d,k}$ is irreducible. When $d=8$, each 
	of the schemes $\mathcal{L}_{8,k}^1$ and $\mathcal{L}_{8,k}^2$ is irreducible.
\end{proposition}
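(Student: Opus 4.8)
The plan is to reduce the proposition to the connectedness of the relevant total space over $\bar k$, to dispose of the exceptional values $d=8,9$ by hand, and to treat the range $1\le d\le 6$ by a monodromy argument. First I would reduce to a geometric statement: by Lemma~\ref{lem:connected} and Lemma~\ref{lem:universal} the composite $\mathcal{L}_{d,\bar k}\to\mathcal{H}_{d,\bar k}\to\Spec\bar k$ (and likewise each $\mathcal{L}_{8,\bar k}^{i}\to\mathcal{H}_{8,\bar k}^{i}\to\Spec\bar k$) is smooth, so $\mathcal{L}_{d,\bar k}$ is a regular scheme, and since a regular scheme is locally irreducible its connected components coincide with its irreducible components. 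Hence it suffices to prove that $\mathcal{L}_{d,\bar k}$ (resp.\ each $\mathcal{L}_{8,\bar k}^{i}$) is connected: connectedness then upgrades to irreducibility, which descends along the surjection $\mathcal{L}_{d,\bar k}\to\mathcal{L}_{d,k}$ to give the proposition.

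For $d=9$ I would use that, by Proposition~\ref{prop:Hilbert}, every fibre of $\ell_{9,\bar k}$ is isomorphic to $\PP^2$; as $\ell_{9,\bar k}$ is proper and flat with connected fibres over the connected base $\mathcal{H}_{9,\bar k}$, the total space is connected. For $d=8$ the two components behave differently. Since $\mathcal{H}_{8,\bar k}^{2}$ parametrises anticanonically embedded copies of the blow-up of $\PP^2$ at one point, which carries a single line, the morphism $\ell_{8,\bar k}^{2}$ is an isomorphism and $\mathcal{L}_{8,\bar k}^{2}\cong\mathcal{H}_{8,\bar k}^{2}$ is connected. For $\mathcal{H}_{8,\bar k}^{1}$, parametrising anticanonically embedded copies of $\PP^1\times\PP^1$ in $\PP^8$, I would use that such a surface is unique up to projective equivalence, so that $\mathcal{H}_{8,\bar k}^{1}\cong\PGL_9/G$ with $G=\Aut(\PP^1\times\PP^1)=(\PGL_2\times\PGL_2)\rtimes\ZZ/2\ZZ$ the stabiliser, and $\mathcal{L}_{8,\bar k}^{1}\cong\PGL_9\times^{G}(\PP^1\sqcup\PP^1)$ with $G$ acting so that the factor-swapping involution exchanges the two rulings. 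As $\PGL_9$ is connected, $\pi_0(\mathcal{L}_{8,\bar k}^{1})$ is the quotient of the two-element set $\pi_0(\PP^1\sqcup\PP^1)$ by this involution, hence a single point.

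For $1\le d\le 6$ the morphism $\ell_{d,\bar k}\colon\mathcal{L}_{d,\bar k}\to\mathcal{H}_{d,\bar k}$ is finite \'etale (Lemma~\ref{lem:universal}) over the connected base $\mathcal{H}_{d,\bar k}$ (Lemma~\ref{lem:connected}), so its connected components correspond to the orbits of the geometric monodromy group, i.e.\ of the image $\Gamma_{\mathrm{mon}}$ of the \'etale fundamental group of $\mathcal{H}_{d,\bar k}$ acting on the set of lines of one del Pezzo surface $S$ of degree $d$. This action preserves the incidence graph and the anticanonical class, so it factors through $\Aut(G_d)=W(\mathbf{E}_{9-d})$, and the key input is the classical fact that it is in fact all of $W(\mathbf{E}_{9-d})$ --- for $d=3$ this is the statement that the Galois group of the $27$ lines on a cubic surface is $W(\mathbf{E}_6)$, the other degrees being analogous. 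Granting this, I would conclude using that $W(\mathbf{E}_{9-d})$ acts transitively on the $(-1)$-classes, equivalently that the graphs $G_d$ are vertex-transitive for $1\le d\le 6$ (the $6$-cycle, the Petersen, Clebsch and Schl\"afli graphs, and the graphs on $56$ and $240$ vertices): then $\Gamma_{\mathrm{mon}}$ is transitive on the lines and $\mathcal{L}_{d,\bar k}$ is connected. For $d=3,4$ one can in fact avoid monodromy, since the lines on an anticanonically embedded del Pezzo surface of degree $d$ are honest lines of $\PP^{d}$, and projecting $\mathcal{L}_{d,\bar k}$ to the Grassmannian of lines in $\PP^{d}$ exhibits it as an open subscheme of a projective, resp.\ Grassmannian, bundle over an irreducible base.

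I expect the surjectivity $\Gamma_{\mathrm{mon}}=W(\mathbf{E}_{9-d})$ to be the only genuinely non-formal step. Over a field of characteristic zero it is classical, and the hypothesis $\characteristic k\gg 1$ is precisely what lets one transport it to positive characteristic, via a specialisation argument for fundamental groups applied to the family of del Pezzo surfaces over $\ZZ$. It is worth noting that the exclusion of $d=7$ is genuine and not an artefact of the method: there $G_7$ is a path on three vertices and $W(\mathbf{E}_2)\cong\ZZ/2\ZZ$ fixes its central vertex, so the ``middle line'' is defined over all of $\mathcal{H}_{7,\bar k}$ and cuts off a connected component, making $\mathcal{L}_{7,\bar k}$ reducible; and $\mathcal{H}_{8,\bar k}$ must genuinely be treated one component at a time, since over $\mathcal{H}_{8,\bar k}^{1}$ the family of lines is a connected double cover while over $\mathcal{H}_{8,\bar k}^{2}$ it is trivial.
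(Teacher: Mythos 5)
Your argument is correct and shares its formal skeleton with the paper's proof --- reduction to characteristic zero by spreading out, smoothness converting connectedness over $\bar k$ into irreducibility, and the homogeneous-space/associated-bundle analysis for $d=8,9$ --- but it takes a genuinely more uniform route through the middle degrees. For $1\le d\le 6$ you run a single monodromy argument, needing only that the geometric monodromy group acts transitively on the vertices of $G_d$, and you feed in full $W(\mathbf{E}_{9-d})$-monodromy as the one non-formal input. The paper instead splits the cases: for $d=5,6$ it uses transitivity of $\Aut(S)$ on the lines of the unique surface over $\bar k$, which (since every automorphism extends to the ambient $\PP^d$) is exactly the homogeneous-space incarnation of your monodromy argument; for $d=3,4$ it cites Debarre--Manivel, i.e.\ the incidence-correspondence argument you offer as an alternative, fibering $\mathcal{L}_{d,\bar k}$ over the Grassmannian of lines in $\PP^d$; and for $d=1,2$ it notes that, $\ell_{d,k}$ being finite \'{e}tale and proper over a connected base, it suffices to exhibit a single irreducible fibre, which it gets from Ern\'{e}, V\'{a}rilly-Alvarado--Zywina and Shioda. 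Those citations are logically the same input as your ``key step'': one del Pezzo surface over a number field with transitive Galois action on its lines certifies transitive monodromy of the universal family, and conversely. The one place you are too quick is in calling full $W(\mathbf{E}_7)$- and especially $W(\mathbf{E}_8)$-monodromy on the $56$ and $240$ lines ``classical'': for the $27$ lines this is Jordan/Harris, but for $d=1,2$ it is a theorem that needs a reference (precisely the Ern\'{e}, Shioda and V\'{a}rilly-Alvarado--Zywina results the paper invokes). With that citation supplied your proof is complete; what it buys over the paper's is uniformity across $1\le d\le 6$, at the cost of demanding the full Weyl group where the paper only ever needs transitivity or a single irreducible fibre.
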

\begin{proof}
	By a spreading out argument, it suffices to prove the result
	when $k$ has characteristic $0$.	
	For $d = 9,6$ or $5$, we may assume that $k$ is algebraically closed,
	and use a similar method to Lemma \ref{lem:connected}.
	Namely, not only is there a unique del Pezzo surface of degree $d$ over $k$, 
	but moreover the automorphism group of the surface acts transitively
	on the lines (for $d=9$ this is trivial, and for $d=6,5$
	this follows from \cite[Prop.~8.2.3]{Dol12}, \cite[Thm.~8.4.2]{Dol12} and \cite[Thm.~8.5.6]{Dol12}).
	As each automorphism of the surface is induced by an automorphism
	of the ambient projective space, it follows that $\mathcal{L}_{d,k}$ is a homogeneous
	space for $\PGL_{d+1}$, hence is irreducible. A similar argument applies to give the
	result when $d=8$.
	For $d=4$ or $3$, the result is a special case of 
	general results on Hilbert schemes of linear subspaces
	on complete intersections; see for instance \cite[p.~3]{DM98}.
	
	We now consider the case where $d=2$ or $1$. 
	Here, it suffices to prove the result when $k = \bar{\QQ}$.
	However to do this, we may reduce to the case where 
	$k$ is an arbitrary number field. As $\ell_{d,k}$ is proper,
	it suffices to show that there exists
	a single fibre which is irreducible, i.e.~we need a del Pezzo
	surface $S$ of degree $d$ over $k$ whose Hilbert scheme of lines $\mathcal{L}(S)$
	is irreducible. This problem has been considered by numerous authors,
	and the relevant results already exist in the literature.
	When $d = 1$, the required result over $\QQ$ is \cite[Thm.~1.3]{VAZ09},
	and moreover \cite[Rem.~1.4]{VAZ09} gives the result over any number
	field. When $d = 2$, the result over $\QQ$ is \cite{Ern94}.
	The method of \cite{VAZ09} is based on the approach used in \cite{Ern94}
	however, and a minor modification of the argument given at the end of 
	\cite[\S 6]{VAZ09}, which we do not reproduce here, yields the result over any number field.	
	Both of these results can also be deduced from work of Shioda \cite{Shi91}.
	This completes the proof.
\end{proof}

\begin{remark}
	Note that $\mathcal{L}_7$ is not irreducible. Indeed, every del Pezzo surface of degree $7$
	has a distinguished line, thus the morphism $\mathcal{L}_7 \to \mathcal{H}_7$ admits
	a section.
\end{remark}

\subsection{Proof of Theorem \ref{thm:distribution}(a)}
We now assume that $k$ is a number field. Here we begin with the
following lemma.

\begin{lemma} \label{lem:irreducible}
	Let $1 \leq d \leq 6$ or $d = 8,9$. There exists a thin
	subset $\Omega_d~\subset~\mathcal{H}_{d,k}(k)$ such that the fibre of $\ell_{d,k}$ over
	every point of $\mathcal{H}_{d,k}(k) \setminus \Omega_d$ is irreducible.
\end{lemma}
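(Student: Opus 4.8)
The plan is to exhibit, for each relevant degree $d$, a single del Pezzo surface over $k$ whose Hilbert scheme of lines is irreducible, and then use Hilbert's irreducibility theorem to deduce that the set of points of $\mathcal{H}_{d,k}(k)$ with reducible fibre is thin. More precisely, by Proposition~\ref{prop:universal} the scheme $\mathcal{L}_{d,k}$ is irreducible for $d \neq 7$ (and for $d=8$ each $\mathcal{L}_{8,k}^i$ is irreducible). Since $\ell_{d,k}$ is finite \'etale of degree $\deg\mathcal{L}(S)$ by Lemma~\ref{lem:universal} (for $d \leq 7$; for $d=8,9$ the base is geometrically connected and the generic fibre is geometrically irreducible by Proposition~\ref{prop:Hilbert}), and $\mathcal{H}_{d,k}$ is smooth and (geometrically) connected on each component by Lemma~\ref{lem:connected}, we are exactly in the situation of Hilbert's irreducibility theorem: a finite \'etale cover $\mathcal{L}_{d,k} \to \mathcal{H}_{d,k}$ with $\mathcal{L}_{d,k}$ irreducible. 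The Hilbert irreducibility theorem then says that the set of $k$-points of $\mathcal{H}_{d,k}$ over which the fibre of $\ell_{d,k}$ fails to be irreducible is a thin subset.

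First I would handle the degenerate low-degree cases. For $d=8$ with $S_{\bar k}\not\cong\PP^1\times\PP^1$, and for $d=9$ and $d=7$-excluded-so actually $d=8,9$, the fibre of $\ell_{d,k}$ is already geometrically irreducible at every point (it is a single line, or a $\PP^2$, or a $\PP^1\sqcup\PP^1$-component which one treats componentwise on $\mathcal{H}_{8,k}^1$), so one may even take $\Omega_d=\emptyset$, or handle it by the same cover argument with the cover being an isomorphism or a trivial $\PP^2$-bundle. For $d=8$ one splits $\mathcal{H}_{8,k}$ into its two components $\mathcal{H}_{8,k}^1$ and $\mathcal{H}_{8,k}^2$ and applies the definition of thin for disjoint unions given just before Theorem~\ref{thm:distribution}; on $\mathcal{H}_{8,k}^1$ the fibres of $\ell_{8,k}^1$ are $\PP^1\sqcup\PP^1$, but $\pi_0(\mathcal{L}_{8,k}^1)\to\mathcal{H}_{8,k}^1$ is a finite \'etale double cover, which is connected iff this double cover is, so again Hilbert irreducibility applies to that double cover.

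For the main range $1\le d\le 6$ the argument is uniform: apply the Hilbert irreducibility theorem to the finite \'etale morphism $\ell_{d,k}:\mathcal{L}_{d,k}\to\mathcal{H}_{d,k}$ (with $\mathcal{H}_{d,k}$ smooth and geometrically connected, hence irreducible, and $\mathcal{L}_{d,k}$ irreducible by Proposition~\ref{prop:universal}). The standard form of Hilbert irreducibility over a number field (as in \cite[\S 9.2]{Ser97a} or \cite[\S 3]{Ser08}, applied after choosing a dominant rational map from $\mathcal{H}_{d,k}$ to affine space, or directly in the form valid for any variety with a $k$-point, which $\mathcal{H}_{d,k}$ has since there exist del Pezzo surfaces of degree $d$ over $k$) yields a thin set $\Omega_d\subset\mathcal{H}_{d,k}(k)$ outside of which the fibre $\ell_{d,k}^{-1}(x)$, as a $k$-scheme, is irreducible. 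One should be slightly careful that "fibre irreducible over $k$" is what Hilbert irreducibility delivers: the thin set is precisely the set of $x$ for which the $k(x)=k$-fibre, i.e.\ the finite \'etale $k$-algebra $\bigoplus k(L)$ for lines $L$ on $S_x$, fails to be a field, equivalently $\mathcal{L}(S_x)$ is reducible. That is exactly the assertion of the lemma.

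The main obstacle is not any single hard step but rather assembling the inputs cleanly: one must verify the hypotheses of the Hilbert irreducibility theorem in the form one wishes to cite — namely that $\mathcal{H}_{d,k}$ is irreducible (done in Lemma~\ref{lem:connected}, modulo splitting off components when $d=8$), that $\mathcal{L}_{d,k}$ is irreducible (Proposition~\ref{prop:universal}, which is why $d=7$ is excluded, matching the statement of the lemma), and that the cover is the kind of morphism to which the theorem applies (finite \'etale, by Lemma~\ref{lem:universal}). The bookkeeping for $d=8$ — passing to the two components and, on $\mathcal{H}_{8,k}^1$, replacing $\mathcal{L}_{8,k}^1$ by its scheme of connected components to get an honest finite cover whose irreducibility governs whether the fibre is reducible as a $k$-scheme — is the fiddliest point, but it is routine once one uses the disjoint-union convention for thinness fixed before Theorem~\ref{thm:distribution}.
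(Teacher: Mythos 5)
Your proposal is correct and follows essentially the same route as the paper: Hilbert's irreducibility theorem applied to the finite \'etale cover $\ell_{d,k}$ for $d\leq 6$ (using Lemma~\ref{lem:universal} and Proposition~\ref{prop:universal}), the trivial case $d=9$, and for $d=8$ a reduction to a degree-$2$ \'etale cover of $\mathcal{H}_{8,k}^1$. The only cosmetic difference is that you phrase this last cover as the relative scheme of connected components of $\mathcal{L}_{8,k}^1$, whereas the paper obtains the same object via the Stein factorisation of the proper map $\ell_{8,k}^1$.
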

\begin{proof}
	When $d=9$, the result is trivial (one may take $\Omega_d = \emptyset$ by Proposition~\ref{prop:Hilbert}).
	When $d\leq 6$, this follows immediately from Lemma \ref{lem:universal},
	Proposition \ref{prop:universal} and Hilbert's irreducibility theorem 
	\cite[Prop.~3.3.1]{Ser08}. When $d=8$, we note that the corresponding
	result is trivial for $\ell_{8,k}^2$. As $\ell_{8,k}^1$ is proper by Lemma \ref{lem:universal},
	we may consider the Stein factorisation 
	$$
	\xymatrix{\mathcal{L}_{8,k}^1  \ar[dr] \ar[r]^{\ell_{8,k}^1}& \mathcal{H}_{8,k}^1 \\
				& \mathcal{P}_{8,k}^1. \ar[u] } 
	$$
	As $\mathcal{L}_{8,k}^1$ is irreducible by Proposition \ref{prop:universal},
	we see that the scheme $\mathcal{P}_{8,k}^1$ is also irreducible. Moreover,
	by Proposition \ref{prop:Hilbert}, we know that the map $\smash{\mathcal{P}_{8,k}^1 \to \mathcal{H}_{8,k}^1}$
	is finite \'{e}tale of degree $2$. Therefore the result follows
	on applying Hilbert's irreducibility theorem to the morphism 
	$\mathcal{P}_{8,k}^1 \to \mathcal{H}_{8,k}^1$.
\end{proof}

Let $S$ be a del Pezzo surface of degree $d$ over $k$ which fails the Hasse
principle for lines. By Theorem \ref{thm:main} we know that $d=8,5,3,2$ or $1$,
and moreover $S$ must be a twist of $\PP^1 \times \PP^1$ when $d=8$ (see \S \ref{sec:DP8}).
Lemma \ref{lem:HP} and Lemma \ref{lem:C_1xC_2} therefore imply
that $\mathcal{L}(S)$ is reducible. Hence, the result follows from Lemma \ref{lem:irreducible}. \qed

\subsection{Proof of Theorem \ref{thm:distribution}(b)}
As explained in \S \ref{sec:DP8}, there exists
a del Pezzo surface $S$ of degree $8$ which 
is a twist of $\PP^1 \times \PP^1$ and which is
a counter-example to the Hasse principle for lines.
Choose an anticanonical embedding 
$S \subset \PP^8$ and identify $S$ with an element of $\smash{\mathcal{H}_{8,k}^1(k)}$.
As $\smash{\mathcal{H}_{8,k}^1}$ is a homogeneous space for $\PGL_9$ (see the proof
of Lemma \ref{lem:connected}),
we see that the orbit of $S$ under the action of $\PGL_9(k)$ is Zariski
dense in $\smash{\mathcal{H}_{8,k}^1}$, and the result follows. \qed

\subsection{Proof of Theorem \ref{thm:distribution}(c)}
We begin the proof with a lemma which says that, under suitable conditions, one may ``approximate''
a collection of points in $\PP^2$ over a finite field by a closed point whose residue
field is a pre-specified number field.

\begin{lemma} \label{lem:approximation}
	Let $L/k$ be a finite field extension of degree $r$,
	with Galois closure $K/k$. Let $\mathfrak{P}$ be a prime ideal
	of $K$ which is completely split over $k$
	and let $P_1,\ldots,P_r \in \PP^2(\FF_\mathfrak{P})$.
	Then there exists a closed point $P$ of $\PP^2_k$ with residue field $L$ such that
	$$ P \equiv P_1 \sqcup \cdots \sqcup P_r \mod \mathfrak{P}.$$
	Moreover, let $P'$ be a closed point of $\PP^2$ of degree $r'$ in general position and assume
	that $r + r' \leq 8$. Then we may choose $P$ such that $P \sqcup P'$
	lies in general position.
\end{lemma}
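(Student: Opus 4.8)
The plan is to produce $P$ by exhibiting an explicit monic polynomial over $k$ whose roots, read off in homogeneous coordinates, give the desired closed point, and to control the reduction mod $\mathfrak{P}$ by a strong-approximation / Krasner-type argument. First I would normalise: since $\mathfrak{P}$ is completely split over $k$, the residue field $\FF_\mathfrak{P}$ coincides with $\FF_\mathfrak{p}$ where $\mathfrak{p} = \mathfrak{P} \cap \OO_k$, and each $P_i \in \PP^2(\FF_\mathfrak{P})$ therefore already lies in $\PP^2(\FF_\mathfrak{p})$. Choose a primitive element $\alpha$ for $L/k$ with minimal polynomial $g \in \OO_{k,\mathfrak{p}}[t]$ of degree $r$; then $L \cong k(\alpha)$, and the $r$ conjugates $\alpha^{(1)},\ldots,\alpha^{(r)}$ of $\alpha$ in $K$ are permuted transitively (on each orbit) by $\Gal(K/k)$. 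The idea is to build an affine map (or, more robustly, a degree-$r$ finite morphism) $\PP^1 \to \PP^2$ defined over $k$ sending the closed point $\{g=0\} \subset \AA^1_k$ to the sought-after $P$, chosen so that its reduction hits $P_1,\ldots,P_r$.

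Concretely, I would write a candidate point in the form $P = [\,\varphi_0(\alpha) : \varphi_1(\alpha) : \varphi_2(\alpha)\,]$ for polynomials $\varphi_j \in \OO_{k,\mathfrak{p}}[t]$ of degree $< r$, so that the Galois conjugates of $P$ are exactly $[\varphi_0(\alpha^{(i)}):\varphi_1(\alpha^{(i)}):\varphi_2(\alpha^{(i)})]$, $i = 1,\ldots,r$; these are automatically distinct provided the $\varphi_j$ are generic, and the closed point they define has residue field $L$ (a primitivity condition, again generic). Now I must solve the congruence $[\varphi_0(\alpha^{(i)}):\varphi_1(\alpha^{(i)}):\varphi_2(\alpha^{(i)})] \equiv P_i \pmod{\mathfrak{P}}$ for each $i$. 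Since $\mathfrak{P}$ is completely split, reduction mod $\mathfrak{P}$ identifies $\OO_{k,\mathfrak{p}}/\mathfrak{p} \cong \FF_\mathfrak{p}$, and the conjugates $\alpha^{(i)}$ reduce to (possibly repeated) elements of $\bar\FF_\mathfrak{p}$; by enlarging $\mathfrak{P}$ from the start — i.e.\ choosing it to avoid the discriminant of $g$ and the finitely many bad primes — one arranges that $g$ remains separable mod $\mathfrak{P}$ and its roots $\bar\alpha^{(i)} \in \FF_\mathfrak{P}$ are distinct. Then the condition on the $\varphi_j$ mod $\mathfrak{P}$ becomes a system of linear interpolation conditions: choose $\bar\varphi_j \in \FF_\mathfrak{P}[t]$ of degree $< r$ with $\bar\varphi_j(\bar\alpha^{(i)}) = $ the $j$-th coordinate of a fixed affine representative of $P_i$, which is uniquely solvable by Lagrange interpolation. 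Lift $\bar\varphi_j$ arbitrarily to $\varphi_j \in \OO_{k,\mathfrak{p}}[t]$, then perturb by elements of $\mathfrak{P}$ (using weak approximation to realise such perturbations by global elements of $\OO_k$) to achieve the genericity needed for the residue field to be exactly $L$ and the $r$ points to be distinct — this is possible because those are Zariski-open conditions and the interpolation conditions mod $\mathfrak{P}$ only constrain finitely many residues.

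For the last sentence of the lemma, given the auxiliary point $P'$ of degree $r'$ with $r + r' \le 8$, I would impose in addition that $P \sqcup P'$ is in general position in the sense of \S\ref{sec:classification}: no three of the $r + r'$ geometric points collinear, no six on a conic, no eight on a singular cubic. Each of these is a nonempty Zariski-open condition on the choice of $\varphi_j$ (it fails only on a proper closed subset, and is satisfiable because one can always wiggle $P$ while keeping $P'$ fixed — the hypothesis $r + r' \le 8$ guarantees we never need more points than a del Pezzo configuration allows), and it is compatible with the congruence mod $\mathfrak{P}$ since, again, that congruence only pins down finitely many residues and leaves a dense (hence Zariski-dense, by weak approximation along $\mathfrak{P}$ and the infinitely many other primes) set of admissible $\varphi_j$. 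I expect the main obstacle to be the bookkeeping of the genericity: one must check that the general-position locus, the locus where the residue field drops below $L$, and the locus where two conjugate points collide are all \emph{proper} closed subsets of the affine space of coefficient-tuples $(\varphi_0,\varphi_1,\varphi_2)$ — equivalently, that a single good choice exists over $\bar k$ — and then invoke weak approximation to move such a good choice into the congruence class mod $\mathfrak{P}$. The existence of a good choice over $\bar k$ is clear since a general degree-$r$ point of $\PP^2$ together with a fixed general $P'$ of degree $r'$ is in general position whenever $r + r' \le 8$; everything else is then a routine approximation argument.
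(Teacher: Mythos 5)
Your overall strategy -- realise the closed point $P$ as a point $Q\in\PP^2(L)$, translate the $r$ congruences on its conjugates into an approximation problem, and then use Zariski-density of the admissible locus to secure the residue field and the general-position condition -- is the same in spirit as the paper's. But the way you implement the approximation step has a genuine gap. You write $Q=[\varphi_0(\alpha):\varphi_1(\alpha):\varphi_2(\alpha)]$ for a primitive element $\alpha$ of $L/k$ and solve the congruences by Lagrange interpolation at the reduced roots $\bar\alpha^{(1)},\ldots,\bar\alpha^{(r)}$; this requires those reductions to be \emph{distinct} elements of $\FF_{\mathfrak P}$. Since $\mathfrak{p}=\mathfrak{P}\cap\OO_k$ splits completely in $L$, the minimal polynomial of any $\alpha\in\OO_L$ factors modulo $\mathfrak{p}$ into linear factors, so all the $\bar\alpha^{(i)}$ lie in $\FF_{\mathfrak p}$ itself. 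If $N(\mathfrak p)<r$ (e.g.\ $k=\QQ$, $L$ a quintic field in which $2$ splits completely, $\mathfrak{P}\mid 2$), no choice of primitive element makes them distinct, and the interpolation system need not be solvable. Your proposed remedy -- ``enlarging $\mathfrak{P}$ from the start, i.e.\ choosing it to avoid the discriminant of $g$ and the finitely many bad primes'' -- is not available: $\mathfrak{P}$ is part of the hypothesis of the lemma, not something you get to choose, and in any case the obstruction above is not a matter of finitely many bad primes for a fixed $g$ but of the residue field being too small for \emph{any} $g$.

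The paper avoids this entirely by never introducing a primitive element. It notes that the condition $\sigma_i(Q)\equiv P_i \bmod \mathfrak{P}$ (for the $r$ embeddings $\sigma_i\colon L\to K$) is equivalent, after moving everything by suitable $g_i\in\Gal(K/k)$, to a congruence $Q\equiv Q_i \bmod \mathfrak{p}_i$ at the $r$ \emph{distinct} primes $\mathfrak{p}_1,\ldots,\mathfrak{p}_r$ of $L$ above $\mathfrak{p}$; the surjectivity of $\PP^2(L)\to\prod_i\PP^2(\FF_{\mathfrak{p}_i})$ is then just weak approximation for $\PP^2$ over $L$, valid with no lower bound on $N(\mathfrak p)$. (Equivalently: work with the \'etale algebra $\OO_L\otimes_{\OO_k}\OO_{k,\mathfrak p}\cong\prod_{i=1}^r\OO_{k,\mathfrak p}$ and apply the Chinese remainder theorem there, rather than with $\OO_{k,\mathfrak p}[t]/(g)$, which need not equal that product.) If you replace your interpolation step by this, the rest of your argument -- lifting, perturbing within the congruence class, and checking that the bad loci (residue field dropping, conjugates colliding, failure of general position for $r+r'\le 8$) are proper closed subsets met by a Zariski-dense set of admissible choices -- goes through and matches the paper's ``simple topological considerations''.
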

Note that in the statement of the lemma, we are viewing $P$ as a Galois invariant collection of 
points of $\PP^2(K)$. In particular, the reduction of $P$
modulo the prime ideal $\mathfrak{P}$ is a well-defined collection of elements of
$\PP^2(\FF_\mathfrak{P})$.
\begin{proof}
	For $i=1,\ldots,r$, let $\sigma_i:L \to K$  denote the distinct embeddings of $L$
	into the field $K$.
	We are looking for an element $Q \in \PP^2(L)$ such that
	\begin{equation} \label{eqn:sigma_i}
		\sigma_i(Q) \equiv P_i \mod \mathfrak{P}, \qquad \mbox{for all }i \in \{1,\ldots,r\}.
	\end{equation}
	For each $i$, choose an element $g_i \in \Gal(K/k)$
	such that $g_i \circ \sigma_i = \sigma_1$. It follows that \eqref{eqn:sigma_i} is
	equivalent to
	$$\sigma_1(Q) \equiv g_i(P_i) \mod g_i(\mathfrak{P}), \qquad \mbox{for all }i \in \{1,\ldots,r\}.$$
	Next let $\mathfrak{p}_i$ denote the distinct prime ideals of $L$ which lie below the $g_i(\mathfrak{P})$
	and let $Q_i \in \PP^2(\FF_{\mathfrak{p}_i})$ lie below the $g_i(P_i)$.
	We have reduced to finding a $Q$ such that
	$$Q \equiv Q_i \mod \mathfrak{p}_i, \qquad \mbox{for all }i \in \{1,\ldots,r\}.$$
	However, the existence of such a $Q$ is now implied by weak approximation
	for $\PP^2$ (see for example \cite[Thm.~5.1.2]{Sko01}), which implies that the map
	$$\PP^2(L) \to \prod_{i=1}^r \PP^2(\FF_{\mathfrak{p}_i}),$$
	is surjective. Moreover, simple topological considerations show
	that we may choose $Q$ such that the general position criterion in the
	statement of the lemma is satisfied when $r + r' \leq 8$. This completes the proof.
\end{proof}

We now use this to obtain the following.

\begin{lemma} \label{lem:approximation_2}
	Let $d=5,3,2$ or $1$.
	Then there exist infinitely many prime ideals~$\mathfrak{p}$
	of $k$ with the following property:
	
	Let $S \subset X_d$ be a split anticanonically embedded del Pezzo surface of degree $d$
	over $\FF_\mathfrak{p}$. Then there exists a smooth proper surface
	$\mathcal{S} \subset X_d$, defined over the localisation $\OO_{k,\mathfrak{p}}$ 
	of $\OO_k$ at $\mathfrak{p}$, whose special fibre is equal
	to $S$, and whose generic fibre is an anticanonically embedded del Pezzo surface
	of degree $d$ which is a counter-example to the Hasse
	principle for lines.
\end{lemma}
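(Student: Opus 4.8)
The plan is to build the counter-example surface over $\OO_{k,\mathfrak{p}}$ by lifting the two blow-up configurations used in Section~\ref{sec:main}, using Lemma~\ref{lem:approximation} to arrange the residue fields, and then spreading out. First I would fix $d \in \{5,3,2,1\}$ and recall the explicit rational constructions of counter-examples from \S\ref{sec:DP5}, \S\ref{sec:DP3}, Lemma~\ref{lem:DP2_counter-example} and Lemma~\ref{lem:DP1_counter-example}: each such surface is obtained as $\Bl_{P \sqcup Q}\PP^2$ (or, for $d=3$, a further blow-down of such a degree-$2$ surface), where $P$ and $Q$ are closed points of $\PP^2$ with prescribed residue fields (a $V_4$- or $A_4$-extension $K/k$ satisfying Lemma~\ref{lem:solvable}, plus a point of suitable small degree). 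Choose such a $K/k$ once and for all, together with its Galois closure and the finitely many primes of $k$ ramified in it or at which the relevant geometry degenerates; the prime ideals $\mathfrak{p}$ we allow are those lying outside this bad set and which split completely in the Galois closure of $K/k$, of which there are infinitely many by Chebotarev.

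Next I would use the given split del Pezzo surface $S \subset X_d$ over $\FF_\mathfrak{p}$. Since $S$ is split, all its lines are rational over $\FF_\mathfrak{p}$, so one can contract an appropriate Galois-stable (in fact pointwise-rational) collection of skew lines on $S$ to reach $\PP^2_{\FF_\mathfrak{p}}$, thereby exhibiting $S$ as the blow-up of $\PP^2_{\FF_\mathfrak{p}}$ in a collection of $\FF_\mathfrak{p}$-points $P_1 \sqcup \cdots$ in general position (the number and grouping matching the configuration $(P,Q)$ above). Here I would apply Lemma~\ref{lem:approximation}: since $\mathfrak{p}$ splits completely in the Galois closure of $K/k$, there is a closed point of $\PP^2_k$ with residue field $K$ (and likewise for the auxiliary point, keeping everything in general position by the last sentence of that lemma) reducing modulo $\mathfrak{p}$ to the prescribed $\FF_\mathfrak{p}$-points. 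Blowing up $\PP^2_{\OO_{k,\mathfrak{p}}}$ in the closure of this configuration — which, after possibly shrinking to remove finitely more primes, is a relative collection of sections/closed subschemes in general position over $\OO_{k,\mathfrak{p}}$ — yields a smooth proper $\OO_{k,\mathfrak{p}}$-scheme $\mathcal{S}$, anticanonically embeddable into $X_d$, whose special fibre is $S$ and whose generic fibre is precisely a surface of the type constructed in \S\ref{sec:main}, hence a counter-example to the Hasse principle for lines. For $d=3$ one inserts the extra step of contracting the rational exceptional curve as in \S\ref{sec:constructions}, which can be done over $\OO_{k,\mathfrak{p}}$ after inverting finitely many more primes.

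The main obstacle I expect is the spreading-out bookkeeping: one must check that the blow-up of $\PP^2_{\OO_{k,\mathfrak{p}}}$ in the closure of $P \sqcup Q$ really is smooth and proper over $\OO_{k,\mathfrak{p}}$ with the expected special fibre, i.e.\ that ``general position'' is preserved under reduction mod $\mathfrak{p}$ and that no unexpected collapsing occurs. This is handled by the constructibility of the general-position locus together with the freedom to discard finitely many primes, but it requires care to phrase uniformly for all four values of $d$ and, in the $d=3$ case, to verify that the Geiser-type contraction of \S\ref{sec:constructions} descends to the integral model. A secondary point is ensuring that the anticanonical model of $\mathcal{S}$ lands inside the fixed ambient space $X_d$ over $\OO_{k,\mathfrak{p}}$ and specialises correctly; this follows from relative anticanonical very-ampleness for families of del Pezzo surfaces, again after inverting finitely many primes. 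Once these finitely many exclusions are absorbed into the (still infinite) set of admissible $\mathfrak{p}$, the lemma follows.
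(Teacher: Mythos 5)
Your strategy is the same as the paper's: fix the field extensions and an infinite set of completely split, large-norm primes in advance; realize the given split surface $S$ over $\FF_\mathfrak{p}$ as a blow-up of $\PP^2_{\FF_\mathfrak{p}}$ at rational points; lift that point configuration to closed points of $\PP^2_k$ with the prescribed residue fields via Lemma~\ref{lem:approximation}; blow up over $\OO_{k,\mathfrak{p}}$ (contracting a curve when $d=3$); and finally lift the anticanonical embedding. However, two steps are genuinely under-justified as written. The more serious one is the case $d=3$: a split cubic surface over $\FF_\mathfrak{p}$ is a blow-up of $\PP^2_{\FF_\mathfrak{p}}$ at only \emph{six} points, whereas the counter-example configuration $P\sqcup Q$ has degree seven, so your claim that contracting skew lines on $S$ exhibits it as a blow-up ``with the number and grouping matching the configuration $(P,Q)$'' fails, and ``inserting the contraction step'' on the integral model does not by itself guarantee that the special fibre of the contracted model equals the \emph{given} $S$ rather than some other split cubic. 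The paper's fix is to first blow up an $\FF_\mathfrak{p}$-point of $S$ not lying on a line (this is precisely where the hypothesis that $N(\mathfrak{p})$ is sufficiently large is used), obtaining a degree-$2$ surface $S'$ with exceptional curve $L$, and then to choose seven pairwise skew lines on $S'$ of which exactly two meet $L$; contracting those seven lines produces the seven $\FF_\mathfrak{p}$-points to which $P\sqcup Q$ is made congruent, and the line of $\mathcal{S}'$ reducing to $L$ is the one contracted to recover $S$ as the special fibre. Without this ``blow up first, then match $L$'' device the $d=3$ case of your argument does not close.

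The second, smaller, issue is the final step: relative anticanonical very-ampleness only yields \emph{some} anticanonical embedding of $\mathcal{S}$ into $X_d$, whose special fibre need only be projectively equivalent to the given $S\subset X_d$. Since the lemma demands that the special fibre be \emph{equal} to $S$ as a subscheme of $X_d$, one must lift the specific coordinates, i.e.\ show that the reduction map $H^0(\mathcal{S},\omega^{-n}_{\mathcal{S}/\OO_{k,\mathfrak{p}}})\to H^0(S,\omega^{-n}_{S/\FF_\mathfrak{p}})$ is surjective; the paper deduces this from Grauert's theorem. Also note that $\OO_{k,\mathfrak{p}}$ is a discrete valuation ring, so ``inverting finitely many more primes'' at that stage is not available --- all exclusions must be absorbed into the initial choice of the admissible set of $\mathfrak{p}$, independently of the particular $S$ handed to you. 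Both points are repairable within your framework, but they are the substance of the proof rather than bookkeeping.
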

\begin{proof}
	We first construct $\mathcal{S}$ as an abstract surface,
	then show how to embed it into $X_d$. We do this in detail only
	in the case where $d=3$; the other cases being handled
	in a similar manner. We work with the case of $D_5$ given in 
	\S \ref{sec:D_5}, using the construction of
	\S \ref{sec:constructions}.	
	
	Choose a quintic extension $K_2$ of $k$,
	whose Galois closure $K$ is a $D_5$-extension of $k$ which satisfies
	the conditions of Lemma \ref{lem:solvable}. Let $K_1$ denote
	the unique quadratic subfield of $K$ and let $\mathfrak{p}$
	be a prime ideal of $k$ which is completely split in $K$ and whose norm $N(\mathfrak{p})$ is sufficiently large.
	Finally, let $S \subset X_3$ be a split anticanonically embedded del Pezzo surface of degree $3$
	over $\FF_\mathfrak{p}$. Blow-up an $\FF_\mathfrak{p}$-point
	of $S$ which does not lie on a line, to obtain a del Pezzo surface
	$S'$ of degree $2$ (such a point exists as $N(\mathfrak{p})$ is sufficiently large).
	Denote by $L$ the exceptional curve of the blow-up.
	Choose pairwise skew lines $L_1,\ldots,L_7$ on $S'$ such that
	$L_1$ and $L_2$ each intersect $L$ with multiplicity $1$, but
	$L_3,\ldots,L_7$ do not meet $L$. Contracting $L_1,\ldots,L_7$, we obtain
	a morphism $S' \to \PP^2$ together with $\FF_\mathfrak{p}$-points $P_1,\ldots,P_7$ given
	by the images of $L_1,\ldots,L_7$.
	
	By Lemma \ref{lem:approximation}, we may
	choose closed points $P$ and $Q$ of $\PP^2$
	with residue fields $K_1$ and $K_2$, respectively, such that $P \sqcup Q$ lies
	in general position and such that $P$ and $Q$ reduce to
	$P_1 \sqcup P_2$ and $P_3 \sqcup \cdots  \sqcup P_7$ modulo $\mathfrak{p}$, respectively.
	Let $\mathcal{S}'$ denote the surface over $\OO_{k,\mathfrak{p}}$
	given by the blow-up of $\PP^2$ in $P$ and $Q$.
	Contracting the line of $\mathcal{S}'$ which reduces to $L$ modulo $\mathfrak{p}$,
	we obtain a surface $\mathcal{S}$ whose special fibre is isomorphic
	to $S$. As explained in \S \ref{sec:D_5}, the generic fibre of $\mathcal{S}$ is a counter-example
	to the Hasse principle for lines, as required.
	This works for any prime ideal $\mathfrak{p}$ which is completely split
	in $K$ and of sufficiently large norm, in particular, there are infinitely 
	many such prime ideals by Chebotarev's density theorem.
	
	When $d=5,2$ or $1$, the results obtained in \S \ref{sec:DP5}, \S \ref{sec:DP2}
	and \S \ref{sec:DP1} imply that we may obtain counter-examples
	to the Hasse principle for lines by blowing-up certain collections
	of closed points in $\PP^2$ with special residue fields.
	Applying a similar  method to the above, we obtain the required surface
	$\mathcal{S}$ in these cases (in fact here it is even easier,
	since no contraction is required).
	
	We now show that the embedding of $S$ can be ``lifted'' to an embedding of $\mathcal{S}$.
	To prove this, let $\pi:\mathcal{S} \to \Spec R$ be as constructed above, where $R=\OO_{k,\mathfrak{p}}$.
	As $R$ is affine, there is a natural isomorphism
	$$H^0(R,\pi_* \omega^{-n}_{\mathcal{S}/R}) \cong H^0(\mathcal{S}, \omega^{-n}_{\mathcal{S}/R}),$$
	of $R$-modules, for each $n \geq 0$.
	Hence by Grauert's theorem \cite[Cor.~III.12.9]{Har77}, the 
	reduction modulo $\mathfrak{p}$ map
	$$ H^0(\mathcal{S}, \omega^{-n}_{\mathcal{S}/R}) \to H^0(S, \omega^{-n}_{S/\FF_{\mathfrak{p}}})$$
	is surjective. As the anticanonical embedding of $S$
	is determined by a collection of global sections of $H^0(S, \omega^{-n}_{S/\FF_{\mathfrak{p}}})$
	for suitable $n \geq 0$,
	it follows that any anticanonical embedding $S \subset X_d$ 
	lifts to an anticanonical embedding $\mathcal{S} \subset X_d$, as required.
\end{proof}

Let $\mathfrak{p}$ be a prime ideal of $k$. Denote by $C(\OO_{k,\mathfrak{p}})$ 
the set of elements of $\mathcal{H}_{d}(\OO_{k,\mathfrak{p}})$
whose generic fibres are counter-examples
to the Hasse principle for lines. Let $C(k) \subset \mathcal{H}_{d}(k)$
and $C(\FF_\mathfrak{p}) \subset \mathcal{H}_{d}(\FF_\mathfrak{p})$
denote the generic fibres and special fibres of these
elements, respectively. Assume that $C(k)$ is not Zariski dense in $\mathcal{H}_{d,k}(k)$.
Applying the Lang-Weil estimates \cite{LW54}, we see that 
$$\# C(\FF_\mathfrak{p}) \ll N(\mathfrak{p})^{\delta(d)-1}, \quad \mbox{ as } N(\mathfrak{p}) \to \infty,$$
where
$$\delta(d) = \dim \mathcal{H}_{d,k}.$$
However by Lemma \ref{lem:approximation_2}, we know that there are infinitely many 
prime ideals $\mathfrak{p}$ of $k$ such that
$$\mathcal{H}_d(\FF_\mathfrak{p})_{\mathrm{split}} \subset C(\FF_\mathfrak{p}),$$
where $\mathcal{H}_d(\FF_\mathfrak{p})_\mathrm{split}$ denotes the collection of elements of 
$\mathcal{H}_d(\FF_\mathfrak{p})$ which correspond to split surfaces. Therefore to complete the proof
of Theorem \ref{thm:distribution}(c), it suffices to show the following.

\begin{lemma}
	Let $1 \leq d \leq 7$ and let $q$ be a power of a sufficiently large prime.
	Then
	$$\# \mathcal{H}_d(\FF_q)_{\mathrm{split}} \gg q^{\delta(d)}, \quad \mbox{ as } q \to \infty.$$
\end{lemma}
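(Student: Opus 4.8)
\emph{The plan.} I would deduce the estimate from a point count on a finite étale cover of $\mathcal{H}_{d,\FF_q}$ parametrising \emph{marked} del Pezzo surfaces. For $1 \le d \le 7$, let $\mathcal{M}_{d,\FF_q} \to \mathcal{H}_{d,\FF_q}$ be the scheme whose points are a del Pezzo surface $S \subset X_d$ of degree $d$, embedded anti- (resp.\ bi- or tri-anti-) canonically, together with an ordered tuple $(E_1,\dots,E_{9-d})$ of pairwise disjoint $(-1)$-curves on $S$ --- equivalently, a way of writing $S$ as a blow-up of $\PP^2$. This is an open-and-closed condition inside the $(9-d)$-fold fibre product of the universal family of lines $\mathcal{L}_{d,\FF_q}$ over $\mathcal{H}_{d,\FF_q}$ (finite étale by Lemma~\ref{lem:universal}), so $\mathcal{M}_{d,\FF_q} \to \mathcal{H}_{d,\FF_q}$ is itself finite étale, of constant degree $N_d \ge 1$ (depending only on $d$) since $\mathcal{H}_{d,\FF_q}$ is connected by Lemma~\ref{lem:connected}, using $d \ne 8$. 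In particular $\mathcal{M}_{d,\FF_q}$ is smooth of dimension $\delta(d) = \dim \mathcal{H}_{d,\FF_q}$.

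\emph{Step 1: a combinatorial identity.} I would first show $\#\mathcal{M}_d(\FF_q) = N_d\cdot\#\mathcal{H}_d(\FF_q)_{\mathrm{split}}$. An $\FF_q$-point of $\mathcal{M}_d$ is a del Pezzo surface $S/\FF_q$ equipped with $\FF_q$-rational pairwise disjoint $(-1)$-curves $E_1,\dots,E_{9-d}$; contracting them yields a form of $\PP^2$ over $\FF_q$, which has an $\FF_q$-point (being a del Pezzo surface over a finite field) and is therefore $\PP^2$, so that $S$ is the blow-up of $\PP^2$ in $9-d$ rational points and hence is split. Conversely, if $S/\FF_q$ is split then $\Gal(\bar\FF_q/\FF_q)$ acts trivially on $\Pic S_{\bar\FF_q}$, so each of the $N_d$ geometric markings of $S$ is already defined over $\FF_q$; thus no $\FF_q$-point of $\mathcal{M}_d$ lies over a non-split surface, and every fibre of $\mathcal{M}_d(\FF_q) \to \mathcal{H}_d(\FF_q)_{\mathrm{split}}$ has exactly $N_d$ elements.

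\emph{Step 2: geometric irreducibility of $\mathcal{M}_d$, and conclusion.} By Lemma~\ref{lem:connected} the scheme $\mathcal{H}_{d,\FF_q}$ is smooth and geometrically connected, hence geometrically irreducible, so it suffices to prove $\mathcal{M}_{d,\bar\FF_q}$ connected. Let $U \subset (\PP^2)^{9-d}$ be the geometrically irreducible open locus of tuples in general position, carrying the blow-up family of del Pezzo surfaces of degree $d$, and let $\mathcal{F} \to U$ be the associated bundle of frames realising the (multi-)anticanonical embeddings of the fibres into $X_d$: a torsor under the connected group $\Aut(X_d)$ (equal to $\PGL_{d+1}$ for $d\ge 3$), hence a geometrically irreducible scheme. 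There is a natural morphism $\mathcal{F} \to \mathcal{M}_d$ sending a frame --- a configuration $c$ together with a trivialisation --- to $\Bl_c\PP^2$ with its induced embedding and with the exceptional curves of the blow-up as marking. Since over an algebraically closed field every del Pezzo surface of degree $d \le 7$ arises as such a blow-up (see \S\ref{sec:classification}) and every embedding of the relevant type comes from a frame, this morphism is surjective on geometric points; being a surjective image of the irreducible $\mathcal{F}_{\bar\FF_q}$, the scheme $\mathcal{M}_{d,\bar\FF_q}$ is irreducible. Now $\mathcal{M}_{d,\FF_q}$ is smooth and geometrically irreducible of dimension $\delta(d)$, so the Lang--Weil estimates \cite{LW54} give $\#\mathcal{M}_d(\FF_q) = q^{\delta(d)} + O(q^{\delta(d)-1/2})$ for $q$ a power of a sufficiently large prime, whence by Step 1
$$\#\mathcal{H}_d(\FF_q)_{\mathrm{split}} = \tfrac{1}{N_d}\,\#\mathcal{M}_d(\FF_q) \gg q^{\delta(d)},$$
as required.

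\emph{Main obstacle.} The real content is Step 2: the geometric irreducibility of the marking cover, equivalently the statement that the geometric monodromy of the universal family of del Pezzo surfaces acts transitively on markings (in fact it realises the full Weyl group $W(\mathbf{E}_{9-d})$). The remaining verifications --- that $U$ carries the blow-up family and that $\mathcal{F}$ is an $\Aut(X_d)$-torsor, the handling of the weighted ambient spaces $X_1$ and $X_2$, and the fact that the positive-dimensional automorphism groups occurring for $d = 6,7$ cause no trouble, since we only need irreducibility together with a point count --- are routine.
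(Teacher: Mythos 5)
Your argument is correct, but it takes a genuinely different route from the paper's. The paper works with the universal family of lines $\ell_d\colon \mathcal{L}_d \to \mathcal{H}_d$ itself: it identifies $\mathcal{H}_d(\FF_q)_{\mathrm{split}}$ with $\ell_d(\mathcal{L}_d(\FF_q))$, notes that $\ell_{d,\FF_q}$ is finite \'etale of bounded degree (Lemma \ref{lem:universal}), and applies Lang--Weil to $\mathcal{L}_{d,\FF_q}$, whose geometric irreducibility is exactly Proposition \ref{prop:universal}; for $d=1,2$ that proposition in turn rests on the existence of del Pezzo surfaces with maximal Galois action on the lines (V\'arilly-Alvarado--Zywina, Ern\'e, Shioda). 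You instead pass to the full marking cover $\mathcal{M}_d \to \mathcal{H}_d$ and prove its geometric irreducibility from scratch via the frame bundle over the configuration space $U \subset (\PP^2)^{9-d}$ --- which amounts to re-proving that the geometric monodromy is all of $W(\mathbf{E}_{9-d})$, since the markings form a torsor under the Weyl group --- before applying Lang--Weil. Each approach has something to recommend it. The paper's proof is two lines long because the hard irreducibility input is already packaged in Proposition \ref{prop:universal}; yours is longer but self-contained, avoiding the external references needed there for $d=1,2$. Your Step 1 is also the more robust way to reach \emph{split} surfaces: a rational point of $\mathcal{M}_d$ forces the contracted surface to be $\PP^2$ over $\FF_q$ and hence trivial Galois action on the whole Picard lattice, giving the exact count $\#\mathcal{M}_d(\FF_q)=N_d\cdot\#\mathcal{H}_d(\FF_q)_{\mathrm{split}}$, whereas a single rational line (a point of $\mathcal{L}_d(\FF_q)$) does not by itself certify splitness, so the paper's identification of $\mathcal{H}_d(\FF_q)_{\mathrm{split}}$ with the image $\ell_d(\mathcal{L}_d(\FF_q))$ is really only an inclusion in the direction that gives an upper bound; your marked cover sidesteps this issue entirely. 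The subsidiary verifications you defer (that $\mathcal{F}$ is a torsor under the connected group $\Aut(X_d)$, including for the weighted ambient spaces when $d=1,2$, and that the pairwise-disjointness condition is open and closed in the fibre power of $\mathcal{L}_d$) are indeed routine.
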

\begin{proof}
	By the definition of $\mathcal{L}_d$, we have
	$$\mathcal{H}_d(\FF_q)_{\mathrm{split}} = \ell_d( \mathcal{L}_d(\FF_q)).$$
	As $\ell_{d,\FF_q}$ is finite \'{e}tale by Lemma \ref{lem:universal}, it therefore
	suffices to show that 
	$$\# \mathcal{L}_d(\FF_q) \gg q^{\delta(d)}, \mbox{ as } q \to \infty.$$
	However again since $\ell_{d,\FF_q}$ is finite \'{e}tale, we have
	$\dim \mathcal{L}_{d,\FF_q} = \delta(d)$ for $q \gg 1$.
	As $\mathcal{L}_{d,\FF_q}$ is geometrically integral for $q$
	sufficiently large by Proposition \ref{prop:universal},
	the result follows from applying the Lang-Weil estimates \cite{LW54}
	to $\mathcal{L}_{d,\FF_q}$.	
\end{proof}
This completes the proof of Theorem \ref{thm:distribution}(c), hence the proof of Theorem \ref{thm:distribution}. \qed

\subsection{Proof of Theorem \ref{thm:thin_cubic}}
Here $\mathcal{H}_3 \subset \PP^{19}$ is the open subset given by the complement
of the discriminant locus. Theorem \ref{thm:thin_cubic}(a) follows from
Theorem \ref{thm:distribution}(c).
Next by Theorem \ref{thm:distribution}(a), we know that 
the counter-examples to the Hasse principle for lines lie on a thin subset.
Theorem \ref{thm:thin_cubic}(b) therefore follows from the fact that $0\%$ of rational points
of bounded height lie inside any given thin subset of projective space
(see \cite[Thm.~13.3]{Ser97a}).
The result is proved. \qed

\section{Intersections of two quadrics} \label{sec:conceptual}
The aim of this section is to prove Theorem \ref{thm:two_quadrics}.
We begin with some results on the geometry of intersections of two
quadrics and pencils of quadrics. For background see \cite{Rei72} 
or \cite[\S 3.3]{Wit07}.

\subsection{Geometry of intersections of two quadrics} \label{sec:geometry}
Let $n \geq 0$ be an integer and let $k$ be a perfect field
of characteristic not equal to $2$. Let $X$ be a smooth complete intersection of two quadrics
over $k$ in $\PP^{2n+2}$. Denote by $\mathcal{L}(X)$ the Hilbert scheme
of $n$-planes inside $X$, which is a finite scheme of degree $2^{2n+2}$. Note
that when $X$ is a del Pezzo surface, this agrees with the definition 
given in \S \ref{sec:lines}. For these facts, and more, see \cite{Rei72}.

The aim of this section is to construct the action of a certain
group scheme  $\Res_{K/k}(\mu_2)/ \mu_2$ on $X$, where $\Res_{K/k}$ denotes the Weil restriction,
whose induced action on $\mathcal{L}(X)$ is simply transitive
(i.e.~such that $\mathcal{L}(X)$ is a torsor under $\Res_{K/k}(\mu_2)/ \mu_2$).
We shall do this by using pencils of quadrics
and descent. The constructions we shall use are generalisations
of some of the constructions used in \cite{Sko10}, where the author only considers the case
of del Pezzo surfaces of degree~$4$. Suppose that $X$ is defined by the equations
$$Q_1(x) = Q_2(x) = 0 \quad \subset \PP^{2n+2}.$$
Consider the pencil of quadrics containing $X$,
given as the subvariety
$$\lambda Q_1(x) + \mu Q_2(x) = 0 \quad \subset \PP^1 \times \PP^{2n+2}.$$
To this we associate the discriminant
$$\det(\lambda Q_1 + \mu Q_2) =0 \quad \subset \PP^1,$$
which parametrises the singular quadrics in the pencil. Note that here
we abuse notation, and identify each $Q_i$ with the corresponding symmetric matrix.
As $X$ is smooth, the discriminant is a finite reduced closed subscheme of $\PP^1$
of degree $2n+3$ \cite[Prop.~3.26]{Wit07}. Let $K$ denote the associated
finite \'{e}tale $k$-algebra.

In order to construct the associated group scheme, we first work over $\bar k$.
Each singular quadric in the pencil over $\bar k$ is a cone over a smooth quadric
of dimension~$2n$, and moreover the singular point of the quadric
does not lie on $X_{\bar k}$ \cite[Lem.~3.7]{Wit07}.
Projecting away from the singular point,
one realises $X_{\bar k}$ as a double cover of the associated smooth quadric. The deck
transformation of this cover is then an involution defined on $X_{\bar k}$.
Considering the group generated by these involutions, we obtain an action of the group $(\ZZ/2\ZZ)^{2n+3}$
on $X_{\bar k}$. All this data is Galois invariant, hence descends to $k$. 
We therefore obtain the action of a group scheme on $X$ over $k$, which
is easily seen to be isomorphic to 
$\Res_{K/k}(\mu_2)$. This does not act faithfully on $X$,
however the quotient $\Res_{K/k}(\mu_2)/\mu_2$ does act faithfully.

\begin{proposition} \label{prop:torsor}
	Let $X$ be a smooth $2n$-dimensional complete intersection of two quadrics over $k$
	and let $\Res_{K/k}(\mu_2)/ \mu_2$ be the associated group scheme, as constructed above.
	Then $\mathcal{L}(X)$ is a torsor under $\Res_{K/k}(\mu_2)/ \mu_2$.
\end{proposition}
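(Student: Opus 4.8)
The plan is to reduce the statement to a geometric assertion over $\bar{k}$, deduce it there from Reid's description of the linear subspaces on an intersection of two quadrics, and then transport it back to $k$ using that the whole construction is Galois-equivariant by design.

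First I would record the numerics and set up the action. Since $K/k$ has degree $2n+3$, the group scheme $\Res_{K/k}(\mu_2)$ has length $2^{2n+3}$, so $G:=\Res_{K/k}(\mu_2)/\mu_2$ has length $2^{2n+2}$, which is exactly $\deg\mathcal{L}(X)$. By functoriality of the Hilbert scheme, the action of $\Res_{K/k}(\mu_2)$ on $X$ constructed above induces an action on $\mathcal{L}(X)$, and this factors through $G$ because $\mu_2$ already acts trivially on $X$ (equivalently: $G$, rather than $\Res_{K/k}(\mu_2)$, acts faithfully on $X$, as recorded in the setup). Since being a torsor under $G$ may be checked after a faithfully flat base change, and $X$, $G$ and the action are all defined over $k$, it suffices to prove that $\mathcal{L}(X)(\bar{k})\neq\emptyset$ and that $G(\bar{k})$ acts simply transitively on it.

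Over $\bar{k}$, non-emptiness is classical \cite{Rei72}. Recall that $G(\bar{k})$ is $\mu_2^{2n+3}(\bar{k})\cong(\ZZ/2\ZZ)^{2n+3}$ modulo the diagonal subgroup, with the $\alpha$-th standard generator acting as the deck involution $\iota_\alpha$ of the double cover $p_\alpha\colon X_{\bar{k}}\to Q'_\alpha$ obtained by projecting away from the vertex of the $\alpha$-th singular quadric in the pencil. Each $p_\alpha$ sends an $n$-plane of $X$ isomorphically onto an $n$-plane of the smooth $2n$-dimensional quadric $Q'_\alpha$, hence into one of its two rulings; fixing a base $n$-plane $\Lambda_0$, this yields a map
\[
\varepsilon\colon\mathcal{L}(X)(\bar{k})\longrightarrow\FF_2^{2n+3},\qquad\varepsilon(\Lambda_0)=0,
\]
where $\varepsilon_\alpha(\Lambda)$ records whether or not $p_\alpha(\Lambda)$ and $p_\alpha(\Lambda_0)$ lie in the same ruling of $Q'_\alpha$. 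The key geometric input, which I would quote from Reid's thesis \cite{Rei72} (see also \cite[\S 3.3]{Wit07}), is that $\iota_\beta$ leaves $\varepsilon_\beta$ unchanged and flips $\varepsilon_\gamma$ for every $\gamma\neq\beta$; thus $\varepsilon(\iota_\beta\cdot\Lambda)=\varepsilon(\Lambda)+(e-e_\beta)$ with $e=(1,\dots,1)$. The vectors $e-e_\beta$ span the hyperplane $H=\{v:\sum_\alpha v_\alpha=0\}$ of dimension $2n+2$ (here one uses that $2n+3$ is odd, so $e\notin H$), and $\bar{\iota}_\beta\mapsto e-e_\beta$ identifies $G(\bar{k})$ with $H$ acting on the image of $\varepsilon$ by translation. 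In particular the $G(\bar{k})$-action is free: if $g\cdot\Lambda=\Lambda$ then $\varepsilon(\Lambda)=\varepsilon(g\cdot\Lambda)=\varepsilon(\Lambda)+h$, where $h\in H$ corresponds to $g$, whence $h=0$ and $g=1$. Taking any $\Lambda_0\in\mathcal{L}(X)(\bar{k})$, its orbit then has exactly $\#H=2^{2n+2}$ elements, while $\#\mathcal{L}(X)(\bar{k})\leq\deg\mathcal{L}(X)=2^{2n+2}$; so the orbit exhausts $\mathcal{L}(X)(\bar{k})$, the action is simply transitive, and $\mathcal{L}(X)$ is in fact reduced. Together with the previous step this shows $\mathcal{L}(X)$ is a $G$-torsor over $k$.

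The hard part will be the geometric lemma on how the deck involutions permute the two rulings of the quadrics $Q'_\alpha$, namely the translation formula $\iota_\beta\leftrightarrow e-e_\beta$; this is the content that genuinely resides in Reid's thesis, and its delicate features are the labelling conventions for the two rulings of an even-dimensional quadric, the check that $\iota_\beta$ preserves the $\beta$-th ruling while reversing all the others, and the compatibility of these local recipes as $\alpha$ ranges over the discriminant. Everything else is formal, once one notes that the Galois-equivariance needed for the descent step is built in: $\Res_{K/k}(\mu_2)/\mu_2$ is precisely the $k$-form of $(\ZZ/2\ZZ)^{2n+2}$ whose Galois module is $\mu_2^{2n+3}/\mu_2$ with $\Gal(\bar{k}/k)$ permuting the $2n+3$ factors via its action on $\Spec K$.
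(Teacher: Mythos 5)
Your proposal is correct and follows essentially the same route as the paper: the paper's proof likewise reduces to the algebraically closed case and then invokes Reid's thesis (Thm.~3.8 of \cite{Rei72}) for the simply transitive action. You simply unpack more of what Reid's theorem says (the ruling-class map $\varepsilon$ and the translation formula), whereas the paper cites it wholesale.
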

\begin{proof}
	To prove the result we may assume that $k$ is algebraically closed.
	In which case, this is \cite[Thm.~3.8]{Rei72}.
\end{proof}

\subsection{Proof of Theorem \ref{thm:two_quadrics}}
\subsubsection{Tate-Shafarevich groups}
We begin the proof by recalling some facts about Tate-Shafarevich groups
of commutative group schemes. Let $k$ be a number field and $S$ be a finite set of places
of $k$.
For a commutative group scheme $G$ over $k$, denote by
$$\Sha_S(k,G) = \ker\left(H^1(k,G) \to \prod_{\mathclap{\substack{v \in \Val(k) \\ v \not \in S}}} H^1(k_v, G)\right),$$
the \emph{$S$-Tate-Shafarevich group} of $G$. Here $H^1(k,G)$ classifies
torsors of $G$ up to isomorphism, with a torsor having the trivial
class if and only if it has a rational point (see for example \cite[\S 2.1]{Sko01}). In particular, the non-trivial
elements of $\Sha_S(k,G)$ classify those torsors of $G$ which are locally
soluble outside of $S$. When $S=\emptyset$, such torsors are exactly those which 
fail the Hasse principle.

\subsubsection{Proof of Theorem \ref{thm:two_quadrics}}
Given Proposition \ref{prop:torsor}, in order to prove Theorem~\ref{thm:two_quadrics}
it suffices to show the following.

\begin{proposition} \label{prop:Sha}
	Let $k$ be a number field and let $S$ be a finite set of places of~$k$.
	Let $K$ be an \'{e}tale $k$-algebra of odd degree.
	Then
	$$\Sha_S(k,\Res_{K/k}(\mu_2) / \mu_2) = 0.$$
\end{proposition}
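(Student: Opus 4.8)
The plan is to exploit the hypothesis that $\deg K$ is odd in order to split off a copy of $\mu_2$ from $\Res_{K/k}(\mu_2)$, reducing the claimed vanishing to the essentially trivial vanishing of $\Sha_S$ for $\Res_{K/k}(\mu_2)$ itself. Write $T=\Res_{K/k}(\mu_2)$; the $\mu_2$ in the statement is the diagonal subgroup $\Delta(\mu_2)\subset T$ (cf.\ \S\ref{sec:geometry}). Let $N\colon T\to\mu_2$ be the norm (product) morphism, with kernel $T^1$. First I would observe that $N\circ\Delta$ is the $[K:k]$-th power map on $\mu_2$, hence equals $\mathrm{id}_{\mu_2}$ since $[K:k]$ is odd; thus $\Delta$ splits $1\to T^1\to T\xrightarrow{N}\mu_2\to1$, giving $T\cong T^1\times\Delta(\mu_2)$ as commutative $k$-group schemes. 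Consequently $\Res_{K/k}(\mu_2)/\mu_2=T/\Delta(\mu_2)$ is canonically isomorphic to $T^1$, and the quotient morphism $q\colon T\to T/\Delta(\mu_2)$ admits a section $s$.

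Next I would use $s$ to compare Tate--Shafarevich groups. Being a morphism of $k$-group schemes, $s$ induces for every field extension $F/k$ a homomorphism $H^1(F,\Res_{K/k}(\mu_2)/\mu_2)\to H^1(F,T)$, which is split injective (as $q_*\circ s_*=\mathrm{id}$) and functorial in $F$. Comparing $F=k$ with the $F=k_v$ then gives an injection
\[
\Sha_S(k,\Res_{K/k}(\mu_2)/\mu_2)\ \hookrightarrow\ \Sha_S(k,T),
\]
so it suffices to prove $\Sha_S(k,T)=0$. (Alternatively one could argue with the sequence $1\to\mu_2\to T\to\Res_{K/k}(\mu_2)/\mu_2\to1$ directly, noting that the connecting map to $H^2(\,\cdot\,,\mu_2)$ vanishes because restriction $\Br(\,\cdot\,)[2]\to\Br(K\otimes_k\cdot\,)[2]$ is injective --- it is split by corestriction, $[K:k]$ being odd --- and then chasing cokernels; but the splitting above is cleaner.)

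For the remaining statement I would invoke Shapiro's lemma and Kummer theory to identify $H^1(k,T)\cong K^*/K^{*2}$ and $H^1(k_v,T)\cong\prod_{w\mid v}K_w^*/K_w^{*2}$, with the evident restriction maps; thus $\Sha_S(k,T)$ consists of the classes $[\alpha]\in K^*/K^{*2}$ that become trivial in $K_w^*/K_w^{*2}$ for every place $w$ of $K$ not lying above $S$. Writing $K=\prod_iK_i$ with each $K_i$ a number field and $\alpha=(\alpha_i)$, suppose $\alpha_i\notin K_i^{*2}$ for some $i$. Then $\Spec K_i(\sqrt{\alpha_i})$ is an irreducible finite \'etale $K_i$-scheme with no $K_i$-point, so by Lemma~\ref{lem:HP} it has no $K_{i,w}$-point --- equivalently $\alpha_i\notin K_{i,w}^{*2}$ --- for infinitely many places $w$ of $K_i$; since only finitely many places of $K_i$ lie above the finite set $S$, this contradicts the description of $\Sha_S(k,T)$ above. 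Hence $\alpha\in K^{*2}$, so $\Sha_S(k,T)=0$ and therefore $\Sha_S(k,\Res_{K/k}(\mu_2)/\mu_2)=0$.

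I do not expect any genuine obstacle. The odd-degree hypothesis is used exactly once, namely to produce the splitting $\Delta$ in the first step (equivalently, to make restriction injective on the $2$-torsion of Brauer groups); the rest of the argument is formal. The only point requiring a little care is verifying that the identifications of the pertinent $H^1$'s with square-class groups are compatible with passage to completions, which is standard.
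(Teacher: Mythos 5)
Your argument is correct and is essentially the paper's own proof: both use the odd-degree hypothesis to split $1\to\mu_2\to\Res_{K/k}(\mu_2)\to\Res_{K/k}(\mu_2)/\mu_2\to1$ (the paper identifies the quotient with the norm-one subgroup $\Res^1_{K/k}(\mu_2)$, you equivalently split off the diagonal $\mu_2$ via the norm map), deduce an injection of $\Sha_S$ into $\Sha_S(k,\Res_{K/k}(\mu_2))$, and then conclude by Shapiro's lemma together with the fact that a quadratic extension of a number field is non-split at infinitely many places. The only cosmetic difference is that you make the last step explicit via Kummer theory and Lemma~\ref{lem:HP}, where the paper simply quotes the fact.
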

\begin{proof}
	First note that since $K/k$ has odd degree, the composed
	morphism $$\Res_{K/k}^1 (\mu_2) \to \Res_{K/k}(\mu_2) \to \Res_{K/k}(\mu_2)/ \mu_2,$$
	is an isomorphism, where $\Res_{K/k}^1(\mu_2)$ denotes the norm $1$ subgroup scheme of $\Res_{K/k}(\mu_2)$.
	From which it follows that the sequence
	$$
		1 \to \mu_2 \to \Res_{K/k}(\mu_2) \to \Res_{K/k}(\mu_2)/\mu_2 \to 1,
	$$
	is a split short exact sequence of group schemes. Hence we obtain an embedding
	$$\Sha_S(k,\Res_{K/k}(\mu_2) / \mu_2) \hookrightarrow \Sha_S(k,\Res_{K/k}(\mu_2)).$$
	Write $K=\prod_{i=1}^r K_i$, where the $K_i$ are field extensions of $k$
	and let $S_i$ denote the set of places of $K_i$ which lie above the places of $S$.
	Then by Shapiro's lemma \cite[Prop.~2.5.10]{Ser97b} it suffices to show that 
	$\Sha_{S_i}(K_i,\mu_2)=0$ for each $i \in \{1,\ldots,r\}$. However $H^1(K_i,\mu_2)$
	classifies quadratic extensions of $K_i$, hence this vanishing follows from the
	fact that any quadratic extension is non-split at infinitely many places.
	This proves the result.
\end{proof}
This completes the proof of Theorem \ref{thm:two_quadrics}. \qed

\end{document}